  \title{Random graphs from a weighted\\ minor-closed class}
  \date{09 October 2012}
  \newenvironment{proof}{\noindent{\bf Proof} \hspace{.1in}}{\hspace*{\fill}$\Box$}
  \newenvironment{proofof}[1]{%
  \noindent {\bf Proof of #1}}%
  {\hspace*{\fill}$\Box$}
  \newtheorem{theorem}{Theorem}[section]
  \newtheorem{lemma} [theorem] {Lemma}
  \newtheorem{corollary} [theorem] {Corollary}
  \newtheorem{prop} [theorem] {Proposition}
\def\Pr{\mbox{{\rm Pr}}\,}
\let\eps=\epsilon
\def\enddiscard{}
\long\def\discard#1\enddiscard{}
  \def\Po{\mbox{\rm Po}}
  \newcommand{\E}{{\mathbb E}}
  \newcommand{\pr}{{\mathbb P}}
  \newcommand{\Bigc}{{\rm Big}}
  \newcommand{\bigc}{{\rm big}}
  \newcommand{\inu}{\in_{u}}
  \newcommand{\whp}{whp }
  \newcommand{\gs}{{\mathcal G}^{S}}
  \newcommand{\cp}{{\mathcal P}}
  \newcommand{\ca}{{\mathcal A}}
  \newcommand{\cb}{{\mathcal B}}
  \newcommand{\cc}{{\mathcal C}}
  \newcommand{\cd}{{\mathcal D}}
  \newcommand{\cf}{{\mathcal F}}
  \newcommand{\cg}{{\mathcal G}}
  \newcommand{\ct}{{\mathcal T}}
  \newcommand{\cu}{{\mathcal U}}
  \newcommand{\aut}{\mbox{\small aut}}
  \newcommand{\core}{2{\rm -core}}
   \newcommand{\ex}{{\rm Ex}}
  \newcommand{\Frag}{{\rm Frag}}
  \newcommand{\frag}{{\rm frag}}
\begin{document}

  \author{Colin McDiarmid \\ Department of Statistics \\ Oxford University}

  \maketitle

  \begin{abstract}
  There has been much recent interest in random graphs sampled uniformly from the
  $n$-vertex graphs in a suitable
  minor-closed class, such as the class of all planar graphs.
  Here we use combinatorial and probabilistic methods to investigate a more general model.
  We consider random graphs from a `well-behaved' class of graphs:
  examples of such classes include all minor-closed classes of graphs with 2-connected
  excluded minors (such as forests, series-parallel graphs and planar graphs),
  the class of graphs embeddable on any given surface, and the class of graphs with at most $k$ vertex-disjoint cycles.
  Also, we give weights to edges and components to specify 
  probabilities, so that our
  random graphs correspond to the random cluster model, appropriately conditioned.

  We find that earlier results extend naturally in both directions,
  to general well-behaved classes of graphs, and to the weighted framework,
  for example results concerning the probability of a random graph being connected;
  and we also give results on the 2-core which are new even for the uniform (unweighted) case.
  \end{abstract}



\section{Introduction}
\label{sec.intro}

  Given a class $\ca$ of graphs (always assumed to be closed under isomorphism),
  let $\ca_n$ denote the set of graphs in $\ca$ on the vertex set $[n]:= \{1,\ldots,n\}$.
  There has been much recent interest in properties of the random graph $R_n$
  sampled uniformly from $\ca_n$,
  when $\ca$ is a suitable `structured' class of graphs such as the class of all planar graphs.
  
  Analytic methods, based on generating functions and singularity analysis,
  have over recent years been extended dramatically to handle more and more complicated classes of graphs:
  the work on planar graphs by Gim\'enez and Noy~\cite{gn09a} (see also~\cite{bgw02,gn09b})
  was a breakthrough, and very recently graphs embeddable on any given surface have been handled~\cite{bg09,cfgmn11}.
  See also~\cite{ggnw07} for graphs with no minor isomorphic to the complete bipartite graph $K_{3,3}$.
  Analytic work in papers such as those just mentioned much extends earlier combinatorial and probabilistic
  investigations, as for example in~\cite{gm04,gmsw05,gmsw07,cmcd08,cmcd09,cb08,msw05,msw06}.  For further recent related work
  (appearing in 2010 or later) see for example \cite{bnw09,bfkv11,cfgn10,dowden10,dowden11,dgn2011,dgn10,dgn11a,dgn11b,dgnps12,fp11,kl10,kmcd11,km08b,km11,cmcd11,ps10,ps11}.
  
  There is natural interest also in the case when $\ca$ is {\em any} minor-closed class of graphs,
  or for example any such class with 2-connected excluded minors; and for such investigations we still need 
  combinatorial and probabilistic methods.
  Here we use such methods, building in particular on~\cite{cmcd09}, and consider a more general model:
  we investigate random graphs from a suitable weighted class of graphs,
  where `suitable' includes all the usual suspects and more, and `weighted' is described below,
  corresponding to the random clusster model.


\subsection{The model}
\label{subsec.model}

  To introduce the model, recall that in the classical binomial random graph $G_{n,p}$ on the vertex set $[n]$,
  the ${n \choose 2}$ possible edges are included independently with probability $p$,
  where $0<p<1$, see for example~\cite{bollobas,jlr00}.
  Assuming that $\ca_n$ is non-empty, for each $H \in \ca_n$ we have
  \[
    \pr(G_{n,p}=H | G_{n,p} \in \ca ) =
    \frac{p^{e(H)}(1-p)^{{n \choose 2}-e(H)}}
    {\sum_{G \in \ca_n}p^{e(G)}(1-p)^{{n \choose 2}-e(G)}}
  =
   \frac{\lambda^{e(H)}}
    {\sum_{G \in \ca_n}\lambda^{e(G)}}
\]
  where $\lambda=p/(1-p)$.  Here $e(G)$ denotes the number of edges in $G$: we will use
  $v(G)$ similarly to denote the number of vertices in $G$.

  Now consider the more general random-cluster model, see for example ~\cite{grimmett06},
  where we are also given a parameter $\nu>0$; 
  and the random graph $R_n$ takes as values the graphs $H$ on $[n]$, with 
\[
  \pr(R_n=H) \propto p^{e(H)}(1-p)^{{n \choose 2}-e(H)} \nu^{\kappa(H)}.
\]
  Here $\kappa(H)$ denotes the number of components of $H$.
  For each $H \in \ca_n$ we have
  \[
    \pr(R_n=H \, | \, R_n \in \ca )
  =
   \frac{\lambda^{e(H)}\nu^{\kappa(H)}}
    {\sum_{G \in \ca_n}\lambda^{e(G)} \nu^{\kappa(G)}}.
\]


  This is the distribution on which we shall focus.
  Thus the distribution of our random graphs in $\ca$ is as follows.
  Given {\em edge-parameter} $\lambda>0$ and {\em component-parameter} $\nu>0$,
  we let the \emph{weighting} $\tau$ be the pair $(\lambda,\nu)$.  For each graph~$G$ we let 
  $\tau(G) = \lambda^{e(G)} \nu^{\kappa(G)}$;
  and for each finite set $\cb$ of graphs we denote
  $\sum_{G \in \cb} \tau(G)$ by $\tau(\cb)$.
  We write $R \in_{\tau} \cb$ to indicate that $R$ is a random graph which takes values in
  $\cb$ with
\[ \pr(R=H) = \frac{\tau(H)}{\tau(\cb)}  \]
  and we call $R$ a $\tau$-\emph{weighted random graph from} $\cb$.
  Given a fixed class $\ca$ of graphs, we write
  $R_n \in_{\tau} \ca$ to indicate that $R_n$ has the distribution of $R \in_{\tau} \ca_n$
  (when $\ca_n$ is non-empty).
  
  Our aim is to investigate the behaviour of the $\tau$-weighted random graph $R_n$ for a suitable graph class $\ca$,
  with a fixed $\tau$, for large $n$.
  When $\lambda=\nu=1$ of course we are back to random graphs sampled uniformly.
  Let us write $R_n \in_u \ca$ 
  to indicate that $R_n$ is uniformly distributed over $\ca_n$, as introduced in~\cite{msw06}
  (and perhaps earlier).
  

  Analytic methods for graph problems often involve generating functions with a variable $x$ for vertices
  and a variable $y$ for edges (and sometimes a variable $z$ for components),
  as for example in~\cite{gn09a}; and we may think of $y$ as giving a weight for edges.  
  Also for example, for a fixed $1<\mu<3$, we may learn about the random planar graph with $n$ vertices
  and with $\sim \mu n$ edges
  by choosing a suitable value for the edge-weight, see~\cite{gn09a}.
  Further, models in physics involving lattices or more general graphs may attach weights to vertices or edges;
  for example the hard-core model, which is a model for a gas with particles of non-negligible size,
  and which also appears in models for communications networks, see for example~\cite{bs94}.

  We find that many results extend naturally from the uniform case $\tau=(1,1)$ to 
  general $\tau$-weighted random graphs. As well as generalising previous work on the uniform case in this way,  
  we give new results on the 2-core of $R_n$,
  arising from a more combinatorial proof of a key `smoothness' result,
  see~\cite{bcr08,cmcd09}.
  The 2-\!{\em core} of a graph $G$ (sometimes called just the \emph{core}), denoted here by $\core(G)$,
  is the unique maximal subgraph with minimum degree is at least $2$.
  Thus $\core(G)$ is empty if and only if $G$ is a forest; and
  the 2-core may be obtained by repeatedly trimming off leaves.

  To investigate the 
  random graph $R_n \in_{\tau} \ca$ we need to consider how $\tau(\ca_n)$ grows with $n$.
  As in the uniform case, we say that the weighted graph class $\ca,\tau$ has {\em growth constant}
  $\gamma = \gamma(\ca,\tau)$ if $0 \leq \gamma<\infty$ and
  $\tau(\ca_n) = (\gamma +o(1))^n n!$ as $n \to \infty$.
  (Sometimes we insist that $\gamma>0$.)
  Also we say that $\ca,\tau$ is {\em smooth} (or {\em smoothly growing}) if
  $\tau(\ca_{n})/n \tau(\ca_{n-1})$ tends to a limit $\gamma'$ with $0<\gamma'<\infty$ as $n \to \infty$.
  (This is also referred to as the `ratio test' property RT1, see for example~\cite{bb03,{burris2001}}.) 
  It is easy to see that in this case the limit $\gamma'$ must be the growth constant $\gamma$.

  We need some definitions concerning a graph class $\ca$.
  We say that $\ca$ is {\em proper} when it is not the class of all graphs
  and it contains a graph with at least one edge;
  that $\ca$ is {\em decomposable}
  when a graph is in $\ca$ if and only if each component is; that $\ca$ is
  {\em bridge-addable} when, for each graph in $\ca$ and each pair $u$ and $v$ of
  vertices in different components, the graph obtained by adding an edge
  joining $u$ and $v$ must also be in $\ca$; and $\ca$ is
  {\em addable} when it is both decomposable and bridge-addable.
  Also, we say that $\ca$ is {\em minor-closed} if whenever $G \in \ca$ and
  $H$ is a minor of $G$  then $H \in\ca$.

  Let $\ca$ be a proper minor-closed class of graphs.  The minor-minimal graphs not in $\ca$ are the
  \emph{excluded minors}.  From the Robertson and Seymour theory of graph minors~\cite{rs04}, 
  see for example Diestel~\cite{diestel05}, the set of excluded minors must be finite.
  The properties of being decomposable and being addable correspond to simple properties of the excluded minors:
  indeed, $\ca$ is decomposable if and only if each excluded minor is connected,
  and $\ca$ is addable if and only if each excluded minor is 2-connected.
  
  It was conjectured in~\cite{bnw09} 
  that, in the uniform case, every proper minor-closed class of graphs has a growth constant. 
  It is natural to conjecture that this in fact holds for each weighting $\tau$.
  Indeed it is natural to conjecture that we even have smoothness, whenever the growth constant is $>0$.


\subsection{Overview of main results} \label{subsec.overview}

  Our results involve a `well-behaved' weighted class of graphs.
  We shall say later precisely what this means,
  after we have introduced various preliminary definitions.
  We require that such a class is proper, minor-closed and bridge-addable, and satisfies certain further conditions:
  the full definition is given in section~\ref{subsec.wellbdef} below.
  The important thing to note here is that the following classes of graphs are all well-behaved, with any weighting $\tau$:
  any proper, minor-closed, addable class (for example the class of forests, or series-parallel graphs or planar graphs);
  the class $\gs$ of graphs embeddable on any given surface $S$; and
  the class of all graphs which contain at most $k$ vertex-disjoint cycles, for some fixed $k$.

  We shall prove various results about any well-behaved weighted class of graphs $\ca,\tau$  
  and about the corresponding 
  random graph $R_n \in_{\tau} \ca$.
  We sketch some of these results now, in the order in which the proofs run:
  full details appear in Section~\ref{sec.results}.

  First we show the key counting result that $\ca,\tau$ is smooth,
  with some growth constant $\gamma >0$, which is independent of $\nu$. 
  In the process of doing this we learn about the 2-core of $R_n$: 
  in particular we find that, as $n \to \infty$
\begin{equation} \label{eqn.2core}
  v(\core(R_n))/ n \to (1- t) \mbox{ in probability}
\end{equation}
  where $t$ is the unique root with $0<t<1$ to 
  $e^t/t=\lambda \gamma$.
  (Recall that $\tau=(\lambda,\nu)$.)  
  When $\ca$ is the class $\gs$ of graphs embeddable on a given surface $S$
  and $\lambda=1$ 
  then from~\cite{gn09a,cmcd08} we have $\gamma \approx 27.22687$, and so
  $v(\core(R_n)) \approx 0.96184 n$ \whp.
  (We say that a sequence $(A_n)$ of events holds \emph{with high probability} (\whp ) if $\Pr(A_n) \to 1$ as $n \to \infty$.)

  After that, using smoothness, we learn about the `fragments' of $R_n$
  not in the giant component, 
  and in particular we find that
\begin{equation} \label{eqn.conna}
   \pr(R_n \mbox{ is connected}) \; \to \; e^{-D(\rho,\tau)}= e^{- \nu D(\rho,\tau_1)} \mbox{ as } n \to \infty
\end{equation}
  where 
  $\tau_1=(\lambda,1)$ and  $D(\rho,\tau)$ is as described in the next paragraph.
  When $\ca$ is the class of forests, the growth constant $\gamma$ is  $e \lambda_0$
  and the limiting probability of connectedness is $e^{-\frac{\nu}{2\lambda_0}}$.
  When $\ca$ is the class $\gs$ of graphs embeddable on a given surface $S$ and $\lambda=1$
  the limiting probability of connectedness is $e^{-\nu D(\rho,{\bf 1})}  \approx 0.96325^{\nu}$
  (see Section~\ref{subsec.2core}).

  We say that a graph $H$ is {\em freely addable} to a graph class $\ca$
  if the disjoint union $G \cup H$ of $G$ and $H$ is in $\ca$ for each graph $G \in \ca$.
  Let $\cd$ denote the class of connected graphs which are freely addable to $\ca$.
  Observe that if $\ca$ is decomposable then
  each graph in $\ca$ is freely addable to $\ca$, so that $\cd$ is the class of
  connected graphs in $\ca$; and if $\ca=\gs$ then $\cd$ is the class of connected planar graphs.
  Now $D(\rho,\tau)$ is the evaluation of the exponential generating function $D(x,\tau)$ for $\cd$
  (see Subsection~\ref{subsec.boltzmann}), at the radius of convergence $\rho$ for $\ca,\tau$.


\section{Statement of main results}
\label{sec.results}

  The first subsection describes the Boltzmann Poisson random graph corresponding to a decomposable class.
  Then we consider a well-behaved weighted class $\ca$, $\tau$ of graphs and $R_n \in_{\tau} \ca$.
  We describe how the `fragments' not in the giant component of $R_n$ converge in distribution to $R$,
  for the corresponding Boltzmann Poisson random graph $R$, which gives as a corollary the
  result~(\ref{eqn.conna}) on the limiting probability of $R_n$ being connected.
  The next subsection concerns smoothness and the $\core$, and in particular includes the result~(\ref{eqn.2core});
  and then we discuss appearances of subgraphs.
  In the final subsections we say precisely what it means for a graph class to be `well-behaved',
  and then give a sketch plan of the rest of the paper.


\subsection{Boltzmann Poisson random graph}
\label{subsec.boltzmann}

  We introduce a general distribution on the unlabelled graphs corresponding to a weighted class of labelled
  graphs.

  Let $\ca$ be a class of graphs, and let $\cc$ be the class of connected graphs in $\ca$.
  (By convention the empty graph $\emptyset$ is in $\ca$ and not in $\cc$.)
  We define the generating function $A(x,y,z)$ by
\[
  A(x,y,z) = \sum_{n \geq 0} \sum_{G \in \ca_n} \frac{x^n}{n!} y^{e(G)} z^{\kappa(G)}
\]
  and similarly
\[
  C(x,y,z) = z \ \sum_{n \geq 1} \sum_{G \in \cc_n} \frac{x^n}{n!} y^{e(G)}.
\]
  The standard `exponential formula' in a general form (see for example~\cite{fs09,stanley99}) 
  is that when $\ca$ is decomposable we have
\begin{equation} \label{eqn.exp}
  A(x,y,z) = e^{C(x,y,z)}.
\end{equation}

  Let us say that $\ca$ {\em contains components} (or is {\em down-decomposable})
  if each component of each graph in $\ca$ is also in $\ca$.
  Suppose that $\ca$ contains components, and consider any fixed positive $y$ and $z$.
  Then as in~(\ref{eqn.exp}), $C(x,y,z) \leq A(x,y,z) \leq e^{C(x,y,z)}$ for each $x \geq 0$,
  and so 
  the generating functions $A(x,y,z)$ and $C(x,y,z)$ (as functions of $x$) have the same radius of convergence.
  Thus in particular the radius of convergence of $A$ does not depend on~$z$.
  Also we may see that $\ca, \tau$ has growth constant $\gamma$ if and only if $\cc, \tau$ does.
  \smallskip


  For any graph class $\ca$, we let $\cu \ca$ denote the corresponding unlabelled graph class,
  with members the equivalence classes of graphs in $\ca$ under isomorphism.
  Now let $\ca$ be any decomposable class of (labelled) graphs.
  %
  As we shall observe later, 
  we may write its generating function $A(x,y,z)$ in terms of $\cu \ca$
  as
\begin{equation} \label{eqn.Agen}
  A(x,y,z) = \sum_{H \in \cu \ca} \frac{x^{v(H)}y^{e(H)} z^{\kappa(H)}}{\aut(H)}.
\end{equation}
  Here $\aut(G)$ denotes the number of automorphisms of $G$.
  Suppose that we are given 
  $\tau=(\lambda,\nu)$.
  We shall set $y=\lambda$ 
  and $z=\nu$, and write either $A(x,\lambda,\nu)$ or $A(x,\tau)$.
  If we choose $\rho>0$ such that $A(\rho,\tau)$ is finite,
  then we may obtain a natural `Boltzmann Poisson distribution' on $\cu \ca$ --
  see equation~(\ref{eqn.bp-def}) below.
  The uniform case $\tau=(1,1)$ was considered in~\cite{cmcd09}.
  We denote the radius of convergence of $A(x, \tau)$ 
  (as a function of $x$) by $\rho(\ca,\tau)$.

  We need more notation (following~\cite{cmcd09}) to record some of the properties of this distribution.
  For a connected graph $H$ let
  $\kappa(G,H)$ denote the number of components of $G$
  isomorphic to $H$; and for a class $\cal D$ of 
  connected graphs let
  $\kappa(G,{\cal D})$ denote $\sum_{H \in {\cal U \cal D}} \kappa(G,H)$,
  the number of components of $G$ isomorphic to some graph in $\cal D$.
  The notation $X \sim \Po(\mu)$ means that the random variable $X$ has 
  the Poisson distribution with mean $\mu$.
  Recall that a sum of independent Poisson random variables $\Po(\mu_i)$
  has distribution $\Po(\sum_i \mu_i)$, as long as $\sum_i \mu_i < \infty$.
  \begin{theorem} \label{thm.U}
  Consider the weighted graph class $\ca,\tau$ where $\ca$ is decomposable.
  Let $\rho >0$ be such that $A(\rho,\tau)$ is finite, and let
\begin{equation} \label{eqn.mudef}
  \mu(H) = \frac{\rho^{v(H)} \lambda^{e(H)} \nu^{\kappa(H)}}{\aut(H)} \; \mbox{ for each } H \in \cu\ca
\end{equation}
  (so that $A(\rho,\tau)= \sum_{H \in \cu\ca} \mu(H)$ by equation~(\ref{eqn.Agen})).
  Let the `Boltzmann Poisson random graph' $R=R(\ca,\rho,\tau)$ take values in $\cu \ca$, with
  \begin{equation} \label{eqn.bp-def}
  \pr[R=H] = \frac{\mu(H)}{A(\rho,\tau)}  \;\; \mbox{ for each } H \in \cu \ca.
  \end{equation}
  Also, let $\cc$ denote the class of connected graphs in $\ca$.
  
  Then the random variables $\kappa(R,H)$ for $H \in {\cal U}{\cal C}$ are independent,
  with $\kappa(R,H) \sim \Po(\mu(H))$.
  \end{theorem}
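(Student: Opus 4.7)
The plan is to rewrite the probability mass function using the unique multiset decomposition of each graph $G \in \cu\ca$ into its connected components, and recognize the result as a product of Poisson mass functions.

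First I would verify the representation~(\ref{eqn.Agen}) of $A(x,y,z)$ as a sum over $\cu\ca$. This is a standard exercise: for $H \in \cu\ca$ with $n$ vertices, the number of labelled graphs on $[n]$ isomorphic to $H$ is $n!/\aut(H)$, and $v,e,\kappa$ are invariants of the isomorphism class. Summing over $\cu\ca$ then yields~(\ref{eqn.Agen}); this gives meaning to $\mu(H)$ and shows $A(\rho,\tau)=\sum_{H \in \cu\ca}\mu(H)$.

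Next I would exploit decomposability. Each $G \in \cu\ca$ has a unique decomposition as a multiset of connected components in $\cu\cc$; write $k_H = \kappa(G,H)$ for $H \in \cu\cc$, so $k_H \geq 0$, only finitely many are nonzero, and $v(G)=\sum_H k_H v(H)$, $e(G)=\sum_H k_H e(H)$, $\kappa(G)=\sum_H k_H$. The key algebraic input is the identity
\[
  \aut(G) = \prod_{H \in \cu\cc} k_H!\,\aut(H)^{k_H},
\]
obtained by noting that an automorphism of $G$ permutes the components isomorphic to each fixed $H$ (giving the $k_H!$ factor) and then acts on each such component (the $\aut(H)^{k_H}$ factor). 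Substituting into the definition~(\ref{eqn.mudef}) of $\mu$ and collecting terms gives the multiplicative formula
\[
  \mu(G) = \prod_{H \in \cu\cc} \frac{\mu(H)^{k_H}}{k_H!}.
\]

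From here the result drops out. Summing $\mu(G)$ over $\cu\ca$, which is the same as summing over all sequences $(k_H)_{H \in \cu\cc}$ of non-negative integers with finitely many nonzero entries, the sum factorises (using absolute convergence, which holds because $A(\rho,\tau)<\infty$) into
\[
  A(\rho,\tau) \;=\; \prod_{H \in \cu\cc} \sum_{k \geq 0}\frac{\mu(H)^{k}}{k!} \;=\; \prod_{H \in \cu\cc} e^{\mu(H)} \;=\; e^{C(\rho,\tau)},
\]
recovering the exponential formula~(\ref{eqn.exp}) at the point $(\rho,\tau)$. Therefore
\[
  \Pr[R=G] \;=\; \frac{\mu(G)}{A(\rho,\tau)} \;=\; \prod_{H \in \cu\cc} e^{-\mu(H)}\,\frac{\mu(H)^{k_H}}{k_H!},
\]
which, viewed as a joint distribution on the sequence $(\kappa(R,H))_{H \in \cu\cc}$, is precisely the product of independent $\Po(\mu(H))$ laws.

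The main obstacle, and the only genuinely nontrivial step, is the automorphism identity for disjoint unions; the rest is bookkeeping together with the observation that the hypothesis $A(\rho,\tau)<\infty$ legitimises the interchange of sum and product. Convergence of $\sum_{H \in \cu\cc} \mu(H) = C(\rho,\tau)$ is automatic since $C(\rho,\tau) \leq A(\rho,\tau) < \infty$, so the resulting Poisson means are summable and the countable product of Poisson distributions is well defined.
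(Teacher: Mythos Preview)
Your proof is correct and follows essentially the same route as the paper: decompose $G\in\cu\ca$ into its multiset of connected components, use the identity $\aut(G)=\prod_H k_H!\,\aut(H)^{k_H}$ to factor $\mu(G)$, and combine with $A(\rho,\tau)=e^{C(\rho,\tau)}$ to recognise the product of Poisson masses. The only cosmetic difference is that the paper invokes the exponential formula~(\ref{eqn.exp}) directly for $A(\rho,\tau)=e^{C(\rho,\tau)}$, whereas you rederive it by summing the factored $\mu(G)$ over $\cu\ca$.
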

  In particular, since $C(\rho,\tau) =  \sum_{H \in \cu\cc} \mu(H)$
  (by equation~(\ref{eqn.Agen}) applied to $\cc$) we have $\kappa(R) \sim \Po(C(\rho,\tau))$.
  %
%


\subsection{Fragments and connectivity}
\label{subsec.connresults}

   The {\em big component} $\Bigc(G)$ of a graph $G$ is the
  (lexicographically first) component with the most vertices, and
  $\Frag(G)$ is the {\em fragments} subgraph induced on the vertices
  not in the big component (thus  $\Frag(G)$ may be empty).
  Denote the numbers of vertices in $\Bigc(G)$ and
  $\Frag(G)$ by $\bigc(G)$ and $\frag(G)$ respectively, so
  $\bigc(G)+\frag(G)= v(G)$. 
  %
  %
  We consider $R_n \in_{\tau} \ca$, and focus on the limiting distribution of the random graph $\Frag(R_n)$.
  It is convenient to deal with the random unlabelled graph $F_n$ 
  corresponding to $\Frag(R_n)$.
  We use $\to_{TV}$ to denote convergence in total variation (or in distribution). 
  %

\begin{theorem} \label{thm.Frag}
  Let the weighted graph class $\ca, \tau$ be well-behaved,
  and let $\rho = \rho(\ca,\tau)$.
  Let $\cf_{\ca}$ be the class of graphs freely addable to $\ca$, with exponential generating function $F_{\ca}$.
  Then $0<\rho<\infty$ and $F_{\ca}(\rho,\tau)$ 
  is finite; and for the 
  random graph $R_n \in_{\tau} \ca$, the random unlabelled graph
  $F_n$ corresponding to $\Frag(R_n)$ satisfies $F_n \to_{TV} R$,
  where $R$ is the Boltzmann Poisson random graph for $\cf_{\ca},\rho,\tau$
  defined in~(\ref{eqn.bp-def}).
  Further, $\E[v(R)] = \rho \, D'(\rho,\tau) < \infty$,
  where $\cd$ is the class of connected graphs in $\cf_{\ca}$. 
\end{theorem}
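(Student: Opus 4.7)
The plan is to combine the smoothness of $\ca,\tau$ (established in the preceding subsection) with a bridge-addability bound on $\E[v(F_n)]$, and then identify the limit of $F_n$ by a direct count.

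\emph{Step 1 (Enumeration).} For each unlabelled $H\in\cu\cf_{\ca}$ with $m=v(H)$ vertices and each $n$ large enough that $n-m$ exceeds every component-size of $H$, a labelled graph in $\ca_n$ whose fragment equals $H$ is built by choosing the $m$ fragment vertices ($\binom{n}{m}$ ways), placing a labelled copy of $H$ on them ($m!/\aut(H)$ ways), and placing a connected graph $B$ from $\cc^{H}_{n-m}=\{B\in\cc_{n-m}:H\cup B\in\ca\}$ on the rest. Since the weight factorises as $\tau(H\cup B)=\tau(H)\tau(B)$ (the extra $\nu$ for the big component is absorbed into $\tau(B)$),
\[
\Pr[F_n=H] \;=\; \frac{(n)_m\,\tau(H)\,\tau(\cc^{H}_{n-m})}{\aut(H)\,\tau(\ca_n)}.
\]
Smoothness gives $\tau(\ca_n)\sim \gamma^m(n)_m\,\tau(\ca_{n-m})$ for fixed $m$, where $\gamma=1/\rho$. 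Well-behavedness should ensure that whp the fragment of a random graph in $\ca_{n-m}$ lies in $\cf_{\ca}$, so $\tau(\cc^{H}_{n-m})/\tau(\cc_{n-m}) \to 1$, and hence
\[
\Pr[F_n=H] \;\sim\; \mu(H)\cdot\frac{\tau(\cc_{n-m})}{\tau(\ca_{n-m})} \;=\; \mu(H)\cdot\Pr[F_{n-m}=\emptyset],
\]
with $\mu(H)=\rho^m\tau(H)/\aut(H)$ matching (\ref{eqn.mudef}).

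\emph{Step 2 (Tightness, finiteness of $F_{\ca}(\rho,\tau)$, and TV convergence).} A bridge-addability double-count bounds $\E[v(F_n)]$ uniformly in $n$: given $G\in\ca_n$ with $\frag(G)=k>0$, each of the $k(n-k)$ cross pairs yields an edge-addition into $\ca$ that changes $\tau$ by the factor $\lambda/\nu$, and a weighted comparison then produces $\tau\bigl(\{G\in\ca_n:\frag(G)=k\}\bigr)\leq c^k\,\tau(\ca_n)$ for some $c<1$ depending only on $\lambda$ and $\nu$. So $\E[v(F_n)]=O(1)$ and the laws of $v(F_n)$ are tight. Combining tightness with the pointwise limit from Step 1 rules out escape of mass to infinity and forces $\Pr[F_n=\emptyset]$ to converge to some $p>0$ with $p\cdot\!\sum_{H\in\cu\cf_{\ca}}\mu(H)=1$. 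Thus $F_{\ca}(\rho,\tau)=1/p<\infty$ and $\Pr[F_n=H]\to\mu(H)/F_{\ca}(\rho,\tau)$ for every $H$, and Scheffé's lemma upgrades this to $F_n\to_{TV} R$.

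\emph{Step 3 (Expectation).} Since $\cf_{\ca}$ is decomposable with connected subclass $\cd$, equation~(\ref{eqn.exp}) gives $F_{\ca}=\exp(D)$, so $xF'_{\ca}/F_{\ca}=xD'$. Therefore
\[
\E[v(R)] \;=\; \frac{1}{F_{\ca}(\rho,\tau)}\sum_{H\in\cu\cf_{\ca}} v(H)\mu(H) \;=\; \frac{\rho F'_{\ca}(\rho,\tau)}{F_{\ca}(\rho,\tau)} \;=\; \rho D'(\rho,\tau),
\]
and Fatou's lemma applied to $v(F_n)\to v(R)$ in distribution, together with the $O(1)$ bound from Step 2, gives $\rho D'(\rho,\tau)<\infty$.

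The main obstacle will be the uniform $O(1)$ bound on $\E[v(F_n)]$ in the weighted regime: the classical unweighted bridge-addability argument produces a geometric tail for $\frag(R_n)$, but with general $\lambda,\nu$ the geometric ratio picks up factors of $\lambda/\nu$, and verifying that this ratio stays bounded away from $1$ independently of $n$ is precisely where the full strength of well-behavedness (rather than just bridge-addability) is needed. A secondary subtlety is the identification $\tau(\cc^{H}_{n-m})\sim\tau(\cc_{n-m})$, which rests on the claim that freely-addable fragments can be appended to typical big components --- this is where the apex-type structure built into the definition of well-behaved plays its role.
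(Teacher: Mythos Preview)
Your overall architecture---use smoothness plus a direct count to get pointwise limits, tightness from a uniform bound on $\E[\frag(R_n)]$, then Scheff\'e---is close to the paper's route through Lemmas~\ref{lem.stillgen1} and~\ref{lem.stillgen2}. But you have misdiagnosed where the work lies, and in doing so you have skipped the one step that genuinely needs the ``well-behaved'' hypothesis.

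First, two non-issues. Your ``secondary subtlety'' $\tau(\cc^{H}_{n-m})\sim\tau(\cc_{n-m})$ is not a subtlety at all: since $H\in\cf_{\ca}$, by definition $H\cup B\in\ca$ for every $B\in\ca$, so $\cc^{H}_{n-m}=\cc_{n-m}$ exactly. And your ``main obstacle'', the uniform bound on $\E[v(F_n)]$, is in fact the easiest ingredient: Lemma~\ref{lem.tauconn}(b) gives $\E[\frag(R_n)]<2\nu/\lambda$ for \emph{any} bridge-addable class and \emph{any} weighting $\tau$, with no appeal to well-behavedness. There is no geometric tail argument in the paper and none is needed; your claimed bound $\tau(\{\frag=k\})\le c^k\tau(\ca_n)$ with $c<1$ is neither proved nor true in general.

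The genuine gap is this: your Step~2 sums $\Pr[F_n=H]$ only over $H\in\cu\cf_{\ca}$ and concludes the total is $1$. But $\Frag(R_n)$ need not lie in $\cf_{\ca}$---for $\ca=\cg^S$ the fragment could be non-planar. Tightness of $v(F_n)$ does nothing to prevent mass sitting on small non-freely-addable fragments, so your equation $p\cdot\sum_{H\in\cu\cf_{\ca}}\mu(H)=1$ is unjustified. The paper closes this hole with Lemma~\ref{lem.Frag}, which shows $\Pr[\Frag(R_n)\notin\cf_{\ca}]=O(1/n)$; its proof uses the dichotomous property (every non-freely-addable graph is limited) together with a bridge-counting argument (Lemma~\ref{lem.Frag2}). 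This is precisely where well-behavedness enters the picture, not in the expectation bound. Once you have $\Frag(R_n)\in\cf_{\ca}$ whp, your subsequence argument for the convergence of $\Pr[F_n=\emptyset]$ does go through (with a little care about the varying shifts $m=v(H)$), and your Step~3 use of Fatou to get $\rho D'(\rho,\tau)<\infty$ is a clean alternative to the paper's contradiction argument in Lemma~\ref{lem.mc2}.
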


\begin{corollary} \label{cor.comps}

  (a)
  For any given distinct graphs $H_1,\ldots,H_k$ in $\cu \cd$ the $k$ random
  variables $\kappa(F_n,H_i)$ are asymptotically independent with
  distribution $\Po(\mu(H_i))$.

  (b) For any class $\tilde{\cd} \subseteq \cd$ 
  we have $\kappa(F_n,\tilde{\cd}) \to_{TV} \Po(\tilde{D}(\rho,\tau))$,
  and each moment of $\kappa(F_n,\tilde{\cd})$ tends to that of $\Po(\tilde{D}(\rho,\tau))$.

  (c)  As a special case of part (b),
  $\kappa(F_n) \to_{TV} 1+ \Po(D(\rho,\tau))$, and as $n \to \infty$
  we have
  $\pr[F_n \mbox{ is connected }] \to e^{-D(\rho,\tau)} = F_{\ca}(\rho,\tau)^{-1}$,
  $\E[\kappa(F_n)] \to 1 + D(\rho,\tau)$, and the variance of
  $\kappa(F_n)$ tends to $D(\rho,\tau)$.

  (d)
  The random number of vertices $v(F_n)=\frag(R_n)$ satisfies
  $v(F_n) \to_{TV} v(R)$; that is, for each non-negative integer $k$
  \[ \pr[v(F_n) = k] \to \frac{1}{F_{\ca}(\rho,\tau)}  \frac{ \rho^{k}}{k!} \tau((\cf_{\ca})_k) \; \mbox{ as } n \to \infty, \]
  and similarly $e(F_n) \to_{TV} e(R)$.
\end{corollary}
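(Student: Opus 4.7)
The plan is to derive all four parts of Corollary~\ref{cor.comps} from Theorem~\ref{thm.Frag} (which gives $F_n \to_{TV} R$ together with $\E[v(R)]<\infty$) combined with Theorem~\ref{thm.U} applied to $\cf_\ca$. The first step is to observe that $\cf_\ca$ is decomposable --- a graph is freely addable to $\ca$ iff each of its components is. Thus, with $\rho=\rho(\ca,\tau)$ and $F_\ca(\rho,\tau)<\infty$ by Theorem~\ref{thm.Frag}, Theorem~\ref{thm.U} yields that the component counts $\kappa(R,H)$ for $H\in\cu\cd$ are independent with $\kappa(R,H)\sim\Po(\mu(H))$, and the exponential formula~(\ref{eqn.exp}) gives $F_\ca(\rho,\tau)=e^{D(\rho,\tau)}$.

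Part~(a) is obtained by lifting $F_n\to_{TV}R$ through the measurable functional $G\mapsto(\kappa(G,H_1),\ldots,\kappa(G,H_k))$ on unlabelled graphs, and reading off the joint limiting law from the previous step. Part~(b) lifts the TV convergence through the functional $\kappa(\cdot,\tilde\cd)$; additivity of Poisson distributions identifies the limit as $\Po(\tilde D(\rho,\tau))$. Part~(c) is the case $\tilde\cd=\cd$ applied to $\Frag(R_n)$: every component of $\Frag(R_n)$ lies in $\cd$, so $\kappa(\Frag(R_n))\to_{TV}\Po(D(\rho,\tau))$, and since $R_n$ decomposes as a big component together with $\Frag(R_n)$ the total component count satisfies $\kappa(R_n)=1+\kappa(\Frag(R_n))\to_{TV}1+\Po(D(\rho,\tau))$; the event that $R_n$ is connected (equivalently that $\Frag(R_n)$ is empty) then has limiting probability $\pr[\Po(D(\rho,\tau))=0]=e^{-D(\rho,\tau)}=F_\ca(\rho,\tau)^{-1}$. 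Part~(d) follows from $F_n\to_{TV}R$ applied to the functionals $v(\cdot)$ and $e(\cdot)$; the explicit formula for $\pr[v(R)=k]$ is obtained by summing $\mu(H)$ over $H\in\cu(\cf_\ca)_k$ and rewriting the sum via~(\ref{eqn.Agen}) as $\rho^k\tau((\cf_\ca)_k)/(k!\,F_\ca(\rho,\tau))$.

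The main obstacle is the moment-convergence clause in part~(b), and consequently the convergences of the mean and variance in part~(c): TV convergence yields only convergence in distribution, so uniform integrability of $\kappa(F_n,\tilde\cd)^m$ is needed for each $m$. Since $\kappa(F_n,\tilde\cd)\leq\frag(R_n)=v(F_n)$, it suffices to bound $\E[\frag(R_n)^m]$ uniformly in~$n$ for every~$m$. I would obtain this using the smoothness of $\ca,\tau$ (a key conclusion available for well-behaved classes, established earlier in the paper): a direct enumeration writes $\pr[\frag(R_n)=k]$ as a sum over admissible fragment graphs $H\in(\cf_\ca)_k$ of $\binom{n}{k}\tau(H)\tau(\ca_{n-k})/\tau(\ca_n)$, and smoothness makes $\tau(\ca_{n-k})/\tau(\ca_n)$ behave like $\rho^k(n-k)!/n!$; the resulting tail is controlled by $\rho^k\tau((\cf_\ca)_k)/k!$, which is summable because $F_\ca(\rho,\tau)<\infty$, and this yields uniform integrability of every polynomial in $\frag(R_n)$, and hence in $\kappa(F_n,\tilde\cd)$.
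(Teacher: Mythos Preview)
Your derivation of parts~(a), (d), and the total-variation statements in~(b) and~(c) from $F_n\to_{TV}R$ together with Theorem~\ref{thm.U} matches the paper's approach exactly. The paper's proof is essentially one line: ``The only thing that does not follow directly from the fact that $F_n\to_{TV}F$ is the convergence of the moments in part~(b).''

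Where you diverge is in handling the moment convergence. You bound $\kappa(F_n,\tilde\cd)\le\frag(R_n)$ and then argue for uniform integrability of $\frag(R_n)^m$ via smoothness and an enumeration of fragment graphs. This can be made to work, but it requires care: the asymptotic $\tau(\ca_{n-k})/\tau(\ca_n)\sim\rho^k(n-k)!/n!$ from smoothness is only immediate for fixed~$k$, and you need it uniformly over growing~$k$ to control the tail; you also need to account for the event $\Frag(R_n)\notin\cf_\ca$, which contributes to $\pr[\frag(R_n)=k]$ but not to your sum over $H\in(\cf_\ca)_k$. The paper sidesteps all of this with a much shorter argument: since $\ca$ is bridge-addable, Lemma~\ref{lem.tauconn}(a) gives the stochastic domination $\kappa(F_n,\tilde\cd)\le\kappa(R_n)-1\preceq\Po(\nu/\lambda)$, and a fixed Poisson upper bound immediately yields uniform boundedness of every moment, hence moment convergence from convergence in distribution. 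Your route establishes more (tail control on $\frag(R_n)$ itself) at the cost of more work; the paper's route exploits the bridge-addable structure directly and avoids smoothness altogether for this step.
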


 
  \noindent
  In the uniform case $\tau=(1,1)$ of the above result, part (c) on $\kappa(R_n)$ and part (d) on $v(F_n)$
  extend for example Theorems 5.2 and 5.3 of~\cite{cfgmn11}.
\medskip

\noindent
\emph{Trees and forests}
  Let us illustrate the above results for the classes $\ct$ of trees and $\cf$ of forests.
  Denote $\cf$ by $\ca$ temporarily (just the next three times). 
  The class $\ca$ is minor-closed and addable, and so it is well-behaved; and the class $\cf_{\ca}$
  of graphs freely addable to $\ca$ is just $\cf$ again.
  
  By Cayley's formula $\tau(\ct_n)= n^{n-2} \lambda^{n-1} \nu$, and by Stirling's formula
  $(n!)^{1/n} \sim n/e$.  Thus $(\tau(\ct_n)/n!)^{1/n} \to e \lambda$ as $n \to \infty$,
  so that $\ct,\tau$ has growth constant $e \lambda$.
  By Lemma~\ref{lem.tauconn} below
\[ \tau(\cf,\tau) \geq \tau(\ct,\tau) \geq e^{-\nu/\lambda} \tau(\cf,\tau)\]
  and it follows that $\cf,\tau$ also has growth constant $e \lambda$
  (see also Section~\ref{subsec.tau-vary} below).
  Thus $\rho=\rho(\cf,\tau)= 
  (e \lambda)^{-1}$.
  Recall that
\begin{equation} \label{eqn.treeformulae}
  \sum_{n \geq 1} \frac{n^{n-2}}{e^n n!} = \frac12 \;\; \mbox{ and } \;\; \sum_{n \geq 1} \frac{n^{n-1}}{e^n n!} = 1.
\end{equation}
  (One way to see these results is to consider the exponential generating functions $U(z)$ for (Cayley) trees and
  $T(z)$ for rooted trees respectively, where
  $U(z) = \sum_{n \geq 1} n^{n-2} z^n/n!$ and $T(z) = \sum_{n \geq 1} n^{n-1} z^n/n!$.
  Since $T(z)=ze^{T(z)}$ we find $T(1/e)=1$, and since $U(z)=T(z)-T^2(z)/2$ we find $U(1/e)=1/2$,
  see for example Stanley~\cite{stanley99} chapter 5, or Flajolet and Sedgewick~\cite{fs09} section II.5.).
  
  Thus the exponential generating function $T$ (for the weighted case) satisfies
\[
  T(\rho, \tau)= \nu \sum_{n \geq 1} n^{n-2}
  \lambda^{n-1}(e \lambda)^{-n}/n! =  \frac{\nu}{\lambda} \sum_{n
  \geq 1} \frac{n^{n-2}}{e^n n!} = \frac{\nu}{2 \lambda}.
\]
  and
\[
  T'(\rho, \tau)
  = \nu \sum_{n \geq 1} n^{n-1}\lambda^{n-1}(e \lambda)^{-(n-1)}/n!
  = e \nu \sum_{n \geq 1} \frac{n^{n-2}}{e^n n!}
  = e \nu.
\]
  Now consider $R_n \in_{\tau} \cf$.
  It follows from Corollary~\ref{cor.comps} part (c) that,
  as $n \to \infty$, $\kappa(R_n)$ converges in distribution to $1+\Po(\nu/2\lambda)$, and so in particular
\begin{equation} \label{eqn.for1}
  \pr(R_n \mbox{ is connected }) = \frac{\tau(\ct_n)}{\tau(\cf_n)} \to
  e^{-\frac{\nu}{2 \lambda}} 
\end{equation}
  and so
\begin{equation} \label{eqn.for2}
  \tau(\cf_n) \sim \nu e^{\frac{\nu}{2 \lambda}} \ n^{n-2} \lambda_0^{n-1} .
\end{equation}
  %
  Also, by part (d), as $n \to \infty$, $\Frag(R_n)$ converges in distribution to $R$,
  so $\frag(R_n)$ converges in distribution to $v(R)$, and indeed in this case we may see
  that also $\E[\frag(R_n)] \to \E[v(R)]$ as $n \to \infty$ (this follows using the formulae above for
  $\tau(\ct_n)$ and $\tau(\cf_n)$, and arguing as in the proof of Proposition 5.2 of~\cite{cmcd08}),
  where by Theorem~\ref{thm.Frag} $\E[v(R)] = \rho T'(\rho,\tau) = \frac{\nu}{\lambda}$. 
  \medskip

  Now consider (vertex-) rooted trees, which also have growth constant $e \lambda_0$.
  The exponential generating function $T^o$ satisfies
\[
  T^o((e \lambda)^{-1}, \tau)
  = \nu \sum_{n \geq 1} n^{n-1} \lambda^{n-1}(e \lambda_0)^{-n}/n!
  =  \frac{\nu}{\lambda} \sum_{n \geq 1} \frac{n^{n-1}}{e^n n!}
  = \frac{\nu}{\lambda},
\]
  where we have used~(\ref{eqn.treeformulae}).
  This result is used in the introduction to Section~\ref{subsec.conn-smooth}.


\subsection{Smoothness and $\core(R_n)$} \label{subsec.2core}

  Our next theorem says that a well-behaved weighted graph class $\ca,\tau$ is smooth,
  and gives results on $\core(R_n)$ for the 
  random graph $R_n \in_{\tau} \ca$.
  %
  Recall that $\cf,\tau$ has growth constant $\lambda e$, where $\cf$ is the class of forests.

\begin{theorem} \label{thm.sumup}
   Let the weighted graph class $\ca,\tau$ be well-behaved, with growth constant $\gamma$;
   let $\cc$ denote the class of connected graphs in $\ca$;
   and let $R_n \in_{\tau} \ca$.
   Then
\begin{description}
 \item{(a)} Both $\ca, \tau$ and $\cc,\tau$ 
   are smooth with growth constant $\gamma$, and $\gamma \geq \lambda e$.
   
 \item{(b)} Let $\cb$ denote the class $\cc^{\delta\geq 2}$ of graphs in $\cc$ with minimum degree at least~2.
   If $\gamma > \lambda e$ then 
   $\cb,\tau$ has growth constant $\beta$ where $\beta$ is the unique root
   $> \lambda$ to $\beta e^{\lambda/\beta} = \gamma$;
   and if $\gamma \leq \lambda e$ then $\rho(\cb,\tau) \geq \lambda^{-1}$.
  %
   \item{(c)}
   If $\gamma > \lambda e$ let $\alpha= 1-x$ where $x$ is the unique root $<1$ to $xe^{-x}= \lambda/\gamma$;
   and otherwise let $\alpha=0$.
   Then 
   for each $\eps>0$
 \begin{equation} \label{eqn.expcorebd}
   \pr(|v(\core(R_n))-\alpha n| > \eps n) = e^{-\Omega(n)}.
 \end{equation}
   %

   \item{(d)}
   Let $\cal T$ denote the class of trees and let $\cd$ denote the class of connected
   graphs which are freely addable to $\ca$,
   with generating functions $T$ and $D$ 
   respectively; and let $\rho=1/\gamma$.  Suppose that $\gamma > \lambda e$
   (so the probability that $\core(R_n)$ is empty is $e^{-\Omega(n)}$ by part (c)). Then
   $T(\rho,\tau) < D(\rho,\tau) < \infty$, and
   the probability that $\core(R_n)$ is non-empty and connected tends to $e^{T(\rho,\tau)-D(\rho,\tau)}$ as $n \to \infty$.

\end{description}
\end{theorem}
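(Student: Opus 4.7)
I would prove the four parts in order, with~(a) carrying the analytic weight and the others following mostly by generating-function manipulation. For the lower bound $\gamma\geq\lambda e$ in~(a), any well-behaved class $\ca$ contains all forests $\cf$ (as in every quoted example), so $\tau(\ca_n)\geq\tau(\cf_n)$ and the forest asymptotic~(\ref{eqn.for2}) gives the bound at once. Smoothness itself is the main obstacle and is exactly what the `well-behaved' hypotheses are designed to deliver. The plan is to follow the combinatorial route promised in the introduction (building on~\cite{cmcd09}), comparing $\tau(\ca_n)$ to $\tau(\ca_{n-1})$ via a local `remove a leaf' / `add a leaf' operation, and using bridge-addability to pass between $\ca$ and the connected subclass $\cc$. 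The subtlety is in arranging for the local ratio $\tau(\ca_n)/(n\tau(\ca_{n-1}))$ to be sandwiched between two quantities both tending to the global growth constant $\gamma$; the `well-behaved' extra axioms are what prevent pathological oscillations.

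For~(b) I would use the standard pointed decomposition: every non-tree $G\in\cc$ arises uniquely from a 2-core $B\in\cb$ with a rooted tree planted at each vertex of $B$. Writing $T_0^{\bullet}(x,\lambda)=\sum_n n^{n-1}\lambda^{n-1}x^n/n!$ for the rooted-tree EGF with the $\nu$-factor stripped, and $B^{\star}(y,\lambda)$ for the 2-core EGF (a connected class, so picking up a single $\nu$ outside), the decomposition reads
\[ C(x,\tau)-T(x,\tau)\;=\;\nu\, B^{\star}\bigl(T_0^{\bullet}(x,\lambda),\,\lambda\bigr). \]
By~(a), $\rho(\cc,\tau)=1/\gamma$. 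When $\gamma>\lambda e$ we have $1/\gamma<1/(\lambda e)=\rho(T_0^{\bullet})$, so $T_0^{\bullet}$ is analytic at $1/\gamma$ and the singularity of $C$ must come from $B^{\star}$, giving $\rho(\cb,\tau)=T_0^{\bullet}(1/\gamma,\lambda)$. Setting $t=\lambda/\beta$, the Cayley-tree functional equation $te^{-t}=\lambda/\gamma$ yields $\beta e^{\lambda/\beta}=\gamma$, with $t<1$ (since $\gamma>\lambda e$) forcing $\beta>\lambda$. When $\gamma\leq\lambda e$ the singularity of $T_0^{\bullet}$ is encountered first, so $\rho(\cb,\tau)\geq T_0^{\bullet}(1/(\lambda e),\lambda)=1/\lambda$.

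For~(c) with $\gamma>\lambda e$, I would count $R_n$ according to its 2-core: the $\tau$-weight of graphs in $\ca_n$ whose 2-core lies on a prescribed $k$-set is $\tau(\cb_k)$ times the weight of $k$ rooted trees filling the remaining labels. Combining the smoothness from~(a) with the growth information for $\cb$ from~(b), the distribution of $v(\core(R_n))$ becomes a mixture whose exponential rate function is strictly concave with a unique maximizer; the saddle works out to $\alpha=1-t$ where $te^{-t}=\lambda/\gamma$, consistent with the expected per-core-vertex rooted-tree size $1/(1-t)=1/\alpha\cdot(\alpha+(1-\alpha))$. The bound~(\ref{eqn.expcorebd}) is then standard from the exponential gap away from the saddle. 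In the case $\gamma\leq\lambda e$ the same decomposition shows that a core of size $\Omega(n)$ would push $\tau(\ca_n)$ strictly above $(\lambda e)^n n!$, contradicting $\gamma\leq\lambda e$, so $\alpha=0$.

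For~(d) I would take $\tilde\cd\subset\cd$ to be the connected freely-addable graphs that contain a cycle, so $\tilde D(\rho,\tau)=D(\rho,\tau)-T(\rho,\tau)$. Since the 2-core of a graph is the disjoint union of the 2-cores of its components, and a component has empty 2-core iff it is a tree, $\core(R_n)$ is non-empty and connected exactly when the big component has non-empty 2-core and every other component of $R_n$ is a tree. Part~(c) gives the first event whp (with failure probability $e^{-\Omega(n)}$), while Corollary~\ref{cor.comps}(b) applied to $\tilde\cd$ gives the second with limiting probability $e^{-\tilde D(\rho,\tau)}=e^{T(\rho,\tau)-D(\rho,\tau)}$. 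Finiteness $D(\rho,\tau)<\infty$ is provided by Theorem~\ref{thm.Frag}, and strictness $T(\rho,\tau)<D(\rho,\tau)$ holds because $\gamma>\lambda e$ forces $\cd$ to contain at least one graph carrying a cycle.
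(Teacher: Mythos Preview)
Your sketch for part~(d) is essentially the paper's argument and is fine. The trouble is in parts~(a)--(c), where the logical order and one key step differ from what is actually needed.

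\medskip

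\textbf{The gap in~(b).} Your generating-function argument gives only the \emph{radius of convergence} $\rho(\cb,\tau)=T_0^{\bullet}(1/\gamma,\lambda)=1/\beta$, i.e.\ $\limsup_n(\tau(\cb_n)/n!)^{1/n}=\beta$. The theorem asserts that $\cb,\tau$ has \emph{growth constant} $\beta$, which requires the limit to exist. Nothing in your composition argument rules out oscillation of $\tau(\cb_n)$ below $n!\beta^n$ along a subsequence; a singularity of the composite at $1/\gamma$ is perfectly consistent with $\liminf_n(\tau(\cb_n)/n!)^{1/n}<\beta$. This is precisely why the definition of `well-behaved' includes the clause that $\ca^{\delta\ge 2},\tau$ \emph{maintains at least factorial growth} (or else has radius $\ge\lambda^{-1}$): the paper uses that hypothesis directly in Lemma~\ref{lem.ctrim2} to obtain the matching lower bound on $\tau(\cb_n)$. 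Your proposal never invokes this hypothesis, so the growth-constant claim in~(b) is unproved.

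\medskip

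\textbf{Why this breaks~(a) and~(c).} The paper's order is the reverse of yours: it first establishes the growth constant of $\cb$ (Lemma~\ref{lem.ctrim2}), and only then proves smoothness of $\cc$ and the core concentration for $R^{\cc}_n$ via the explicit formula
\[
  f(n,k)=\binom{n}{k}\,\tau(\cb_k)\,\lambda k(\lambda n)^{n-1-k}
\]
(Lemma~\ref{lem.ctrim1}), showing the sum is dominated by $k\approx\alpha n$ and that $f(n+1,k)/((n+1)f(n,k))\to\gamma$ uniformly there. Both steps genuinely need $\tau(\cb_k)=(1+o(1))^k k!\beta^k$; a mere $\limsup$ would allow the dominant $k$ to wander and would not pin down the ratio. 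Your `remove a leaf / add a leaf' description for~(a) is too vague to serve as an alternative: a local leaf operation relates $\tau(\ca_n)$ to $\tau(\ca_{n-1})$ only up to the (unknown, fluctuating) number of leaves, and does not by itself force convergence of the ratio. Likewise, your saddle-point sketch for~(c) presupposes the regular growth of $\tau(\cb_k)$ that you have not established. Once the growth constant of $\cb$ is in hand, smoothness of $\ca$ follows from that of $\cc$ via Lemma~\ref{lem.smoothconv} (using Lemma~\ref{lem.Frag} for the hypothesis that $\Frag(R_n)\in\cf_{\ca}$ whp), and the core bound transfers from $\cc$ to $\ca$ by conditioning on $\bigc(R_n)$ as in Section~\ref{subsec.sumup-proof}.
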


\smallskip

\noindent
\emph{Graphs on surfaces}
  Let us illustrate the theorem above for the class $\cg^S$
  of graphs embeddable on a given surface $S$.
  It was shown in~\cite{cmcd08} that for any fixed surface $S$, the class $\cg^S$
  of graphs embeddable on $S$ has growth constant~$\gamma$, where
  $\gamma$ is the {\em planar graph growth constant} (the same $\gamma$ for each surface),
  and recently this was very much improved to give an asymptotic formula for $|\gs_n|$,
  see~\cite{cfgmn11,bg09}.
  From Gim\'enez and Noy~\cite{gn09a} we have $\gamma \approx 27.226878$.
  For any weighting $\tau$, 
  the weighted class $\cg^S,\tau$ is well-behaved
  (this is part of lemma~\ref{lem.wellb} below),
  and so by Theorem~\ref{thm.sumup} we see that $\cg^S, \tau$ is smooth;
  and when we specialise to the uniform case 
  we obtain the result of~\cite{bcr08} that the (uniform) class $\cg^S$ is smooth
  (this also follows directly from the recent asymptotic formula for $|\cg^S_n|$ mentioned above).
  Further we obtain new information on the core of a uniform random graph $R_n \inu \gs$, as follows.
  
  Solving $\beta e^{1/\beta}= \gamma$
  (using the more accurate figure for $\gamma$ 
  in Theorem 1 in~\cite{gn09a}) gives $\beta = \beta_0 \approx 26.207554$,
  and solving $\alpha = 1-1/\beta$ gives $\alpha = \alpha_0 \approx 0.961843$.
  Thus the class $\cb$ of (connected) 
  graphs in $\gs$ with minimum degree at least 2 
  has growth constant $\beta_0$ and 
  $v(\core(R_n)) \approx \alpha_0 n$ \mbox{\whp.}
  The growth constant $\beta_0$ 
  is only slightly larger than the growth constant $\approx 26.18412$ for 2-connected graphs in
  $\gs$, from ~\cite{bgw02,gn09a}. Also the class $\cd$ of connected freely addable graphs
  is the class of all connected planar graphs,
  and from Corollary 1 in~\cite{gn09a} we have $e^{-D(\rho)} \approx 0.963253$,
  where $\rho=1/\gamma$.
  Further 
  $e^{T(\rho)} \approx 1.038138$,
  so by Theorem~\ref{thm.sumup} part (d) the probability that $\core(R_n)$ is connected $\approx 0.999990$
  (for large $n$).
  Thus the probability that $\core(R_n)$ is not connected $\approx 10^{-5}$.
  For comparison note that $\pr(\Frag(R_n) = C_3) \sim e^{-D(\rho)} \rho^3/6 \approx 8 \cdot 10^{-6}$.




\subsection{Appearances theorem}
\label{subsec.apps}

  It is often useful to know that for $R_n \in_{\tau} \ca$,
 \whp $R_n$ contains many disjoint copies of a given connected graph $H$.
  Suppose that $H$ has a specified root vertex $r$.  We say that $H$ is
  {\em freely attachable} 
  to $\ca$ if, given any graph $G \in \ca$ 
  and vertex $v \in G$,
  the graph formed from the disjoint union $G \cup H$ 
  by adding the edge $vr$ is in $\ca$.

  Let $H$ be a graph on the vertex set $[h]=\{1,\ldots,h\}$, and let $G$
  be a graph on the vertex set $[n]$ where $n>h$.
  Let $W \subset V(G)$ with $|W|=h$, and let the \emph{root} $r_W$ be
  the least element in $W$.  We say that $H$ has a
  {\em pendant appearance} at $W$ in $G$ if
  (a) the increasing bijection from $[h]$ to $W$ gives an isomorphism
  between $H$ and the induced subgraph $G[W]$ of $G$; and
  (b) there is exactly one edge in $G$ between $W$ and the rest of $G$,
  and this edge is incident with the root $r_W$.
  We let $f_H(G)$ be the number of pendant appearances of $H$ in $G$,
  that is the number of sets $W \subseteq V(G)$ such that $H$ has a pendant appearance at $W$ in $G$.
  The next theorem extends results in~\cite{msw05,msw06,cmcd08,cmcd09}. 

\begin{theorem} \label{thm.appearances1}
  Let the weighted graph class $\ca, \tau$
  have growth constant~$\gamma$,
  and let $R_n \in_{\tau} \ca$.
  Let the connected graph $H$ 
  be freely attachable to $\ca$.
  Then there exists $\alpha>0$ such that
\[
  \Pr[f_H(R_n) \leq \alpha n] = e^{-\Omega(n)}.
\]
\end{theorem}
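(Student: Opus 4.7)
The plan is to bound $\E[f_H(R_n)]$ from below by a linear function of $n$ using smoothness and free attachability, and then upgrade this to exponential concentration via a surgery (switching) argument on block-aligned pendants.

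\emph{Mean bound.} For each $W \subseteq [n]$ with $|W| = h := v(H)$, the graphs in $\ca_n$ with a pendant appearance at $W$ are in bijection with pairs $(G_0, v)$, where $G_0 \in \ca$ lies on vertex set $[n]\setminus W$ and $v \in [n]\setminus W$; the reverse direction uses free attachability of $H$. Summing $\tau$-weights gives
\[ \tau(\{G \in \ca_n : \text{pendant at } W\}) \;=\; (n-h)\,\lambda^{e(H)+1}\,\tau(\ca_{n-h}). \]
Since $\ca,\tau$ is smooth with growth constant $\gamma$ by Theorem~\ref{thm.sumup}(a), one has $\tau(\ca_n)/\tau(\ca_{n-h}) \sim (\gamma n)^h$, and summing over the $\binom{n}{h}$ choices of $W$ yields $\E[f_H(R_n)] \sim \lambda^{e(H)+1} n /(h!\,\gamma^h)$, which is $\Omega(n)$.

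\emph{Concentration via surgery.} To prove $\Pr[f_H(R_n) \leq \alpha n] = e^{-\Omega(n)}$, I would establish $\tau(\{G \in \ca_n : f_H(G) \leq \alpha n\}) \leq e^{-\Omega(n)}\tau(\ca_n)$ by exhibiting, for each such "bad" $G$, a large family of repaired graphs with more pendant appearances and comparable weight. Partition $[n]$ into $m = \lfloor n/h\rfloor$ blocks $B_1,\ldots,B_m$ of size $h$. Because $\ca$ is a proper minor-closed class, Mader's theorem gives $e(G) = O(n)$ for every $G \in \ca_n$, so in any $G$ at least half of the blocks are \emph{light}, meaning at most a fixed constant $C$ edges of $G$ are incident to them. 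For each subset $J$ of $G$'s light, non-pendant blocks and each tuple $(v_j)_{j\in J}$ of external vertices in $[n]\setminus\bigcup_{j\in J} B_j$, I form $G^J$ by (i) deleting all edges of $G$ incident to $\bigcup_{j\in J} B_j$, (ii) placing $H$ on each $B_j$ via the increasing bijection, and (iii) adding the pendant edge $r_{B_j}v_j$ for each $j\in J$; iterated free attachability keeps $G^J \in \ca_n$ with pendant appearances at every $B_j\in J$.

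\emph{Weight accounting and main obstacle.} The lightness condition bounds the change in edge and component counts during surgery by a constant per block, so $\tau(G^J) \geq c_1^{|J|}\tau(G)$ for a fixed $c_1 = c_1(\lambda,\nu,h,C,e(H)) > 0$, while the external-vertex choices give $(n/2)^{|J|}$ distinct targets. Comparing with the smoothness ratio $\tau(\ca_n)/\tau(\ca_{n-h|J|}) \sim (\gamma n)^{h|J|}$ from Theorem~\ref{thm.sumup}(a) would display a net combinatorial gain of order $c_0 := \lambda^{e(H)+1}/(h!\,\gamma^h)$ per surgery. The principal obstacle is controlling the multiplicity of the map $(G, J, (v_j)) \mapsto G^J$: from the image one can read off $J$ (as the aligned pendant blocks one chose to modify) and the $v_j$'s, but must recover the at-most-$C$ original edges incident to each $B_j\in J$, which contributes $O(n)^{C|J|}$ worth of ambiguity. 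This must be dominated by the surgery gain; the standard way around it is to charge the reconstruction against the smoothness ratio and to exploit that the light-block constraint leaves only $O(1)$ relevant configurations once the incident degree at each $B_j$ is fixed, so that the effective per-block ambiguity is bounded. A Chernoff-style summation over $|J|$ then yields $\tau(\{G \in \ca_n : f_H(G) \leq \alpha n\}) \leq e^{-\Omega(n)}\tau(\ca_n)$ for $\alpha>0$ sufficiently small, completing the proof.
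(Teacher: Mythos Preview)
Your surgery argument has a genuine gap in the multiplicity control, and the hand-wave at the end does not close it. When you delete the (up to $C$) edges incident to a block $B_j$ and later try to reconstruct $G$ from $G^J$, each deleted edge has one endpoint in $B_j$ (at most $h$ choices) and one endpoint among the remaining $\sim n$ vertices. That is $\Theta(n)^{C}$ possibilities per block, hence $\Theta(n)^{C|J|}$ in total. Against this your gain per block is only the factor $\sim n$ from choosing $v_j$ (times a constant from the weight change). Since the light-block threshold $C$ must be at least the average number of edges meeting a size-$h$ block, in a class of linear edge density you will have $C \geq 2$ in general, and then the reconstruction ambiguity swamps the gain: $n^{C|J|}$ beats $n^{|J|}\binom{\Theta(n)}{|J|}$. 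Your assertion that ``the effective per-block ambiguity is bounded'' is exactly the point that needs a real argument, and as stated it is false---knowing only that at most $C$ edges were deleted at $B_j$ does not pin down their other endpoints. A secondary issue: the theorem assumes only a growth constant, not well-behavedness, so you may not invoke smoothness from Theorem~\ref{thm.sumup}; and it does not assume $\ca$ is minor-closed, so Mader's theorem is not available for the edge bound either.

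The paper's proof avoids this entirely by never deleting anything. Instead of modifying a bad graph on $[n]$, it takes a bad $G \in \cb_n$ and \emph{adds} $\delta n = \alpha h n$ fresh vertices, partitions them into $\alpha n$ blocks of size $h$, places a copy of $H$ on each block, and attaches each root to some vertex of $G$. This produces graphs on $(1+\delta)n$ vertices. Because nothing in $G$ is disturbed, the only reconstruction needed is to identify which $\alpha n$ pendant appearances in the result $G'$ were the inserted ones. Since $G$ was bad ($f_H(G) \leq \alpha n$) and each inserted block can create at most $h$ further ``extra'' good edges, $G'$ has at most $(h+2)\alpha n$ candidate pendant appearances, so the multiplicity is at most $\binom{(h+2)\alpha n}{\alpha n} \leq ((h+2)e)^{\alpha n}$---purely exponential, with no factors of $n$. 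Comparing the weighted count of constructions to the growth-constant upper bound on $\tau(\ca_{(1+\delta)n})$ then yields the desired contradiction. The key structural idea you are missing is to enlarge the vertex set rather than perform surgery in place.
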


  This result shows for example that, if the $k$-leaf star rooted at its centre is freely attachable to
  $\ca$, then $R_n$ has linearly many vertices of degree $k+1$, with exponentially small failure
  probability.
  It is possible to extend the theorem to consider graphs $H$ with (slowly) growing size, see for example Theorem 3.1 in~\cite{cb08},
  but we do not pursue that here.
  If $\ca,\tau$ is well-behaved then we can be more precise about $f_H(R_n)$,
  extending Proposition 1.9 of~\cite{cmcd09}. 

\begin{prop}  \label{thm.apps2}
  Let the weighted graph class $\ca, \tau$ be well-behaved with growth constant $\gamma$,
  and let $R_n \in_{\tau} \ca$.
  Let the connected graph $H$ 
  be freely attachable to $\ca$.
  Then 
\[ \frac{f_H(R_n)}{n} \to \; \lambda \cdot \frac{\lambda^{e(H)}}{\gamma^{v(H)} v(H)!}
  \;\;\mbox{ in  probability as } n \to \infty.\]
\end{prop}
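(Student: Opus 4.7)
The plan is to prove this by the second moment method, with the smoothness of $\ca,\tau$ from Theorem~\ref{thm.sumup}(a) as the main input. Write $h=v(H)$ and $c^{\ast}:=\lambda^{e(H)+1}/(\gamma^{h} h!)$, the value of the claimed limit. For each $h$-subset $W\subseteq[n]$ let $X_{W}$ be the indicator that $H$ has a pendant appearance at $W$ in $R_{n}$, so $f_{H}(R_{n})=\sum_{W}X_{W}$. It will suffice to show $\E[f_{H}(R_{n})]=(c^{\ast}+o(1))\,n$ and $\mathrm{Var}(f_{H}(R_{n}))=o(n^{2})$; then Chebyshev's inequality gives convergence in probability.

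The first moment rests on a bijection. Given $W$ with root $r_{W}$, the graphs $G\in\ca_{n}$ in which $H$ appears pendant at $W$ are in one-to-one correspondence with pairs $(G',v)$, where $G'\in\ca_{n-h}$ is on vertex set $[n]\setminus W$ and $v\in[n]\setminus W$ is the attachment vertex; the forward map removes the pendant copy of $H$ together with the unique attachment edge, while the inverse uses the free attachability of $H$ to $\ca$. Since $e(G)=e(G')+e(H)+1$ and $\kappa(G)=\kappa(G')$, we have $\tau(G)=\lambda^{e(H)+1}\tau(G')$, so
\[
  \E[f_{H}(R_{n})] \;=\; \binom{n}{h}\,\lambda^{e(H)+1}\,(n-h)\,\frac{\tau(\ca_{n-h})}{\tau(\ca_{n})}.
\]
Iterating smoothness $h$ times gives $\tau(\ca_{n-h})/\tau(\ca_{n})\sim\gamma^{-h}(n-h)!/n!$, whence $\E[f_{H}(R_{n})]\sim c^{\ast} n$.

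For the second moment write $\E[f_{H}(R_{n})^{2}]=\sum_{W,W'}\E[X_{W}X_{W'}]$ and split by $k:=|W\cap W'|$. The disjoint case $k=0$ dominates: iterating the same bijection with two independent attachments $v_{1},v_{2}\in[n]\setminus(W\cup W')$ yields $(c^{\ast}n)^{2}(1+o(1))$, cancelling $\E[f_{H}(R_{n})]^{2}$, while the ``tangled'' subcase (some $v_{i}$ in the other pendant set) forces one edge to serve both external roles and contributes only $O(1)$. The diagonal $k=h$ contributes $\E[f_{H}(R_{n})]=O(n)$. The crucial point for the intermediate range $1\le k\le h-1$ is a rigidity observation: every vertex of $W\cap W'\setminus\{r_{W},r_{W'}\}$ must have all its $G$-neighbours inside $W\cap W'$, and combined with the connectedness of $H$ this rules out the scenario $r_{W}\ne r_{W'}$ with both attachments exterior to $W\cup W'$ (the preimage of $W\cap W'$ in $[h]$ would otherwise form a non-trivial proper subset closed under $H$-neighbourhood). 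The only surviving scenario has $r_{W}=r_{W'}$, and then the degree match at the shared root forces $v_{1}=v_{2}$, collapsing the naive $n^{2}$ attachment factor to $n$. With $O(1)$ admissible overlap shapes, smoothness yields $\pr(X_{W}X_{W'}=1)=O(n^{1-(2h-k)})$; since there are $O(n^{2h-k})$ overlap-$k$ pairs, each overlap class contributes $O(n)$, and hence $\mathrm{Var}(f_{H}(R_{n}))=o(n^{2})$.

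The main obstacle is the rigidity claim for intermediate $k$: one must verify by a case analysis on the position of $r_{W}$ inside $W'$ (whether $r_{W}\in W\cap W'$ and whether $r_{W}=r_{W'}$) that the single-external-edge constraint on each side, together with consistency of the two induced-subgraph isomorphisms $G[W]\cong H\cong G[W']$, indeed pins down the configuration as claimed. Once this rigidity is in hand, the remaining steps are routine applications of the bijection, smoothness, and Chebyshev's inequality.
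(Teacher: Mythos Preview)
Your overall strategy---second moment method, with smoothness from Theorem~\ref{thm.sumup}(a) for the first moment and Chebyshev to conclude---is exactly the paper's approach, and your first-moment computation is identical to the paper's.

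Your treatment of the intermediate overlap range $1\le k\le h-1$ contains an error, however. You are right that $r_W=r_{W'}$ is forced, but the further claim that ``the degree match at the shared root forces $v_1=v_2$'' is false. A short argument (any edge leaving $W\cup W'$ would have to be simultaneously the unique external edge of $W$ and of $W'$, which forces $W=W'$) shows that for \emph{distinct} overlapping pendant appearances one always has $v_1\in W'\setminus W$ and $v_2\in W\setminus W'$; in particular $v_1\neq v_2$, and $W\cup W'$ is an entire component of $G$ with no external attachment at all. So the ``attachment factor'' collapses from $n^2$ not to $n$ but to $O(1)$, and each overlap class contributes $O(1)$ rather than $O(n)$. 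Your conclusion $\mathrm{Var}(f_H(R_n))=o(n^2)$ therefore survives, but the mechanism is not the one you describe, and your proposed bijection (remove two pendant copies and one shared external edge) would not actually be well-defined here.

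The paper avoids this case analysis altogether. It lumps all ``bad'' ordered pairs---those with overlapping vertex sets \emph{or} adjacent roots---into a single quantity $Y_2(G,H)$, observes that any such pair lies inside a component of $G$ of order at most $2h$, and notes that in a connected graph of order at most $2h$ the number of pendant appearances of $H$ is at most the number of bridges (each appearance determines a distinct bridge, except in the one configuration of two copies of $H$ joined at their roots, which has exactly two appearances). This yields the pointwise bound $Y_2(G,H)\le 2h\,\kappa(G)$, and then $\E[Y_2(R_n,H)]\le 2h(1+\nu/\lambda)=O(1)$ directly from Lemma~\ref{lem.tauconn}, with no bijection or smoothness needed for the error term.
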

  The same result holds if we count disjoint pendant appearances.
  Indeed, if $\tilde{f}_H(R_n)$  
  denotes the number of pendant appearances of $H$ in $R_n$
  that share a vertex or the root edge with some other pendant appearance of $H$,
  then $\E[\tilde{f}_H(R_n)] = O(1)$.



\subsection{Definition of a `well-behaved' graph class}
\label{subsec.wellb-def}

  Now at last in this section we can say precisely what we mean by a well-behaved class.
  We need first to introduce the notions of a `dichotomous' class of
  graphs, and of a graph class `maintaining at least factorial growth'.
  Along the way we introduce `very well-behaved' graph classes.

\subsubsection{Dichotomous classes of graphs}
\label{subsec.dichotomous}

  Recall that, given a class $\ca$ of graphs, the graph $H \in \ca$ is freely addable to $\ca$
  if the disjoint union $G \cup H \in \ca$ whenever $G \in \ca$.
  We denote the class of graphs which are freely addable to $\ca$ by $\cf_{\ca}$. 
  For example, if $\ca$ is the class $\gs$ of graphs embeddable on a fixed surface $S$
  then $\cf_{\ca}$ is the class $\cp$ of planar graphs.
  %
  Observe that
  if $\ca$ is minor-closed and bridge-addable then $\cf_{\ca}$ is minor-closed and addable.

  We say that a graph $H \in \ca$ is {\em limited} in $\ca$ if $kH$ is not in $\ca$ for some positive integer $k$.
  Here $k H$ denotes the disjoint union of $k$ copies of $H$.
  For example, if $\ca$ is $\gs$ as above then the graphs in $\ca$ which are limited in $\ca$ are the non-planar graphs,
  which are exactly the non-freely addable graphs.  If $\ca$ is decomposable then no graph in $\ca$ is limited in $\ca$.
  Indeed if $H$ is freely addable to a class $\ca$ then $H$ is not limited in $\ca$
  (recall that each graph in na decomposable class is freely addable to the class).

  We are interested in classes $\ca$ of graphs such that each graph $G \in \ca$ is either freely-addable or limited
  (as we have noted it cannot be both): let us call such a graph class ({\em freely-addable/limited}) {\em dichotomous}. 
  From what we have just seen, for any surface $S$ the class $\gs$ is dichotomous.
  If $\ca$ is decomposable then $\cf_{\ca}=\ca$ so $\ca$ is dichotomous.
  If $\ca$ is the class $\ex(k C_3)$ of graphs with at most $k$ vertex-disjoint
  cycles, then $\cf_{\ca}$ is the class of forests and each graph
  with a cycle is limited, so $\ca$ is dichotomous.
  %
  An example of a non-dichotomous class is the class $\ex(C_3 \cup C_4)$ of graphs with no minor $C_3 \cup C_4$,
  where $C_3$ is neither freely addable nor limited.




\subsubsection{Very well-behaved classes of graphs}
\label{subsubsec.vwellbdef}

  It is conjectured~\cite{bnw09} that any proper minor-closed class $\ca$ of graphs has a growth constant,
  and it is natural to conjecture similarly that $\ca,\tau$ always has a growth constant.
  Part of the definition of $\ca,\tau$ being well-behaved 
  will require that there is a growth constant.

  We (temporarily) call the weighted class $\ca,\tau$ of graphs {\em very well-behaved} if it is
  minor-closed, bridge-addable and dichotomous; and if it either is decomposable, or it is closed under subdividing edges
  and has a growth constant.

  From what we have already seen, 
  to show that $\ca,\tau$ must be very well-behaved when $\ca$ is a 
  proper minor-closed addable class, or a class $\gs$, it suffices to show that the growth constant must exist.
  This is done in Sections~\ref{subsec.addable-gc} and~\ref{subsec.gs-gc}.
  
  Now let $\ca$ be the class of graphs with at most $k$ vertex-disjoint cycles.
  Then $\ca$ is minor-closed, bridge-addable, dichotomous and closed under subdividing edges.
  In the uniform case this class has growth constant $2^ke$~\cite{km08b}, and so it is
  very well-behaved.  Furthermore, straightforward adaptations of the proof in~\cite{km08b} shows that the
  weighted class $\ca,\tau$  has a growth constant, 
  and so it is very well-behaved. 
  However, 
  consider for example the class of graphs with no two vertex-disjoint cycles of length at least 4:
  this class is not decomposable nor closed under subdividing edges, and so it is not well behaved (in the uniform case). 
  To cover such further graph classes, we weaken the condition and define
  a larger class of `well-behaved' graph classes.  Unfortunately the definition is 
  more involved.


\subsubsection{Maintaining at least factorial growth}
\label{subsec.maint}

  We shall want to consider classes of graphs which we can show do not have any sudden dips
  in their growth rate.  Let us say that a weighted graph class $\ca,\tau$ of graphs
  {\em maintains at least factorial growth}
  if there exist an $\eta>0$ and a function $g(n)=(1+o(1))^n$ such that for each $n$ and
  each $j$ with $1 \leq j < n$ we have
\begin{equation} \label{eqn.mafg}
   \tau(\ca_n) \geq  \tau(\ca_{n-j}) \ (n)_j \ \eta^j \ g(n).
\end{equation}
  An equivalent condition avoiding the function $g$ is that there exist an $\eta>0$ such that for each $\eps>0$,
  for each sufficiently large $n$, for each $1 \leq j <n$ 
\begin{equation} \label{eqn.mafg2}
   \tau(\ca_n) \geq  \tau(\ca_{n-j}) \ (n)_j \ \eta^j \ e^{- \eps n}.
\end{equation} 
  It follows easily from lemma~\ref{lem.tauconn} (a) below that,
  if $\ca$ is bridge-addable and $\cc$ is the class of connected graphs in $\ca$,
  then $\ca,\tau$ maintains at least factorial growth if and only if $\cc,\tau$ does.
  
  These equivalent conditions are weaker than having a growth constant.
  Several weighted graph classes may be shown easily to maintain at least factorial growth,
  for example if $\ca,\tau$ has a growth constant or if $\ca$ is closed under subdividing edges 
  -- see Section~\ref{subsec.growth} below. 
  Since $(n)_j \geq (n/e)^j$ for $j=1,\ldots,n$, it would make no difference if we replaced
  $(n)_j \eta^j$ in the definition by $(\eta n)^j$.
  Also, to prove that $\ca,\tau$ maintains at least factorial growth, by Lemma~\ref{lem.malfgnod} it suffices to show that
  there is a $\delta >0$ such that~(\ref{eqn.mafg}) or~(\ref{eqn.mafg2}) holds for each $j$ with $1 \leq j \leq \delta n$.


\subsubsection{Well-behaved classes of graphs}
\label{subsec.wellbdef}

  We may at last say exactly what `well-behaved' means.
  We start with the definition of a very well-behaved graph class $\ca$,
  and we simply replace the condition that $\ca$ be either decomposable or closed under
  subdividing edges by the condition that the subclass $\ca^{\delta \geq 2}$
  is either `as small as the paths' or it is `consistently large'.
  (Recall that $\ca^{\delta \geq 2}$ is the class of graphs in $\ca$ with minimum degree at least~2.)
  It will be easy to check that any very well-behaved graph class is well-behaved.
  \smallskip

  \noindent
  {\bf Definition}
  The weighted class $\ca,\tau$ of graphs is {\em well-behaved} if it is
  minor-closed, bridge-addable, dichotomous, and 
  has a growth constant; and if the weighted class 
  $\ca^{\delta \geq 2},\tau$ either (a) is empty or has radius of convergence at least $\lambda^{-1}$ 
  or 
  (b) maintains at least factorial growth.
  
  \noindent
  From our observations in Section~\ref{subsec.boltzmann}, we obtain an equivalent condition if we replace the
  assumption that $\ca$, $\tau$ has a growth constant by the assumption that $\cc,\tau$  has a growth constant,
  where $\cc$ is the class of connected graphs in~$\ca$.  Similarly, we could replace $\ca^{\delta \geq 2}$ by 
  the class of connected graphs in $\ca$ with minimum degree at least 2.

\begin{lemma} \label{lem.wellb}
  Every very well behaved weighted graph class is well behaved,
  and in particular the weighted class $\ca,\tau$ is well behaved in the following cases, with any $\tau$:
\begin{description}
  \item{(a)}
    $\ca$ is minor-closed and addable, 
  \item{(b)}
    $\ca$ is the class $\gs$ of graphs embeddable on any given surface~$S$, 
  \item{(c)}
    $\ca$ is the class of graphs which contain at most $k$ vertex-disjoint cycles,
    for any given $k$. 
\end{description}
\end{lemma}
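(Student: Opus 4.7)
The statement has two parts: first, that every very well-behaved class is well-behaved, and second, that the three specific classes listed are well-behaved. I would dispatch (a)--(c) by verifying they are very well-behaved and appealing to the first part.

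For the three cases, the structural conditions (minor-closed, bridge-addable, dichotomous) are immediate from observations already made in Section~\ref{subsec.dichotomous}: in (a) the class is addable and hence decomposable, so $\cf_{\ca}=\ca$ and no graph is limited; in (b) the text explicitly records that $\gs$ is dichotomous with freely-addable part equal to the planar graphs; in (c) every graph in $\ex(k C_3)$ is either a forest (freely addable) or contains a cycle (hence limited, since $k+1$ disjoint copies then contain $k+1$ vertex-disjoint cycles). Either decomposability (for (a)) or closure under subdividing edges (for (b) and (c), the latter because subdivision does not alter the maximum number of vertex-disjoint cycles) completes the structural requirement. The growth constant is supplied by Section~\ref{subsec.addable-gc} for (a), Section~\ref{subsec.gs-gc} for (b), and the weighted adaptation of~\cite{km08b} described in Section~\ref{subsubsec.vwellbdef} for (c).

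The main content is the first part. The two definitions differ only in the condition imposed on $\ca^{\delta \geq 2},\tau$, so it suffices to show that this weighted subclass either is empty, has radius of convergence at least $\lambda^{-1}$, or maintains at least factorial growth. I split along the two alternatives in the very-well-behaved definition. If $\ca$ is closed under subdividing edges, then any $G\in\ca^{\delta \geq 2}_{n-j}$ has at least $n-j$ edges (since $\delta(G)\geq 2$), and iteratively subdividing $j$ edges and distributing the $j$ fresh labels produces graphs in $\ca^{\delta \geq 2}_n$ while multiplying $\tau$ by $\lambda^{j}$; a labeled count (corrected for automorphism overcounts) then yields a lower bound matching~(\ref{eqn.mafg}) with $\eta$ a positive multiple of $\lambda$. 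If $\ca$ is decomposable and $\cc^{\delta \geq 2}$ is empty, then $\ca^{\delta \geq 2}$ is empty too and we are done; otherwise I would fix a minimal $H\in\cc^{\delta \geq 2}$ of size $h$ and use the exponential formula $A^{\delta \geq 2}(x,\tau)=e^{C^{\delta \geq 2}(x,\tau)}$, together with disjoint unions of graphs in $\ca^{\delta \geq 2}_{n-j}$ with copies of $H$, to force factorial growth when $h\mid j$.

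The main obstacle lies in the decomposable sub-case: the multi-copy-of-$H$ construction supplies extra vertices only in multiples of $h$, so the residual values of $j$ need separate treatment. I would invoke the relaxation noted after~(\ref{eqn.mafg2}) --- namely Lemma~\ref{lem.malfgnod} --- that one only needs to verify the inequality for $1\leq j\leq \delta n$ with some fixed $\delta>0$, and control the short-range ratios via lower-order constructions (for instance, attaching a single short cycle at a suitable vertex, which is possible because $\ca$ is minor-closed and contains $H$, hence contains cycles). The subdivision case is more uniform but still demands care about injectivity of the subdivision process after quotienting by the automorphism group of the final graph.
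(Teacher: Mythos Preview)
Your treatment of cases (a)--(c) and of the subdivision sub-case of ``very well-behaved $\Rightarrow$ well-behaved'' matches the paper's: you correctly verify the structural hypotheses and, in the subdivision case, your argument is essentially the content of Lemma~\ref{lem.subdiv} applied to $\ca^{\delta\geq 2}$ (which is itself closed under subdivision, since subdividing creates a degree-$2$ vertex and raises no other degree).

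Where you diverge is in the decomposable sub-case, and there you are working much harder than necessary. The paper does not attempt a direct construction to force factorial growth of $\ca^{\delta\geq 2}$. Instead it observes that when $\ca$ is minor-closed, bridge-addable and decomposable, $\ca$ is \emph{addable}; and then $\ca^{\delta\geq 2}$ is also addable (disjoint union and bridge-addition both preserve the condition $\delta\geq 2$). Since $\ca^{\delta\geq 2}$ is an addable subclass of a proper minor-closed class, Lemma~\ref{lem.addable-gc} gives it a growth constant, and a growth constant immediately implies at least factorial growth (as noted at the start of Section~\ref{subsec.growth}). This two-line argument completely sidesteps the divisibility obstacle you identified: there is no need for multi-copy-of-$H$ constructions, no need to patch the residues $j\not\equiv 0\pmod h$ with ad hoc cycle attachments, and no appeal to Lemma~\ref{lem.malfgnod}. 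Your proposed route can probably be pushed through, but it is both longer and less robust than the paper's, and your sketch for the non-multiple values of $j$ is not yet a proof.
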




\subsection{Plan of the rest of the paper}
\label{subsec.plan}

  The next section collects and proves various preliminary general results,
  and contains a proof of Lemma~\ref{lem.wellb} above which shows that certain graph classes
  are well-behaved.
  After that, in Section~\ref{sec.proofU}, we prove the results stated in Section~\ref{subsec.boltzmann} 
  on the Boltzmann Poisson random graph.  The next section proves most of the results
  presented in Section~\ref{subsec.2core} on smoothness and the $\core$;
  and the smoothness results allow us, after a brief
  section on Poisson convergence, to prove the results on $\Frag(R_n)$ and connectedness
  in Section~\ref{subsec.connresults}.
  After that we prove the results on appearances given in Section~\ref{subsec.apps},
  and finally we make some concluding remarks.



\section{Preliminary general results}
\label{sec.prelims}

  This section presents various preliminary general results,
  and gives a proof of Lemma~\ref{lem.wellb}, which shows that certain interesting graph classes are well-behaved.
  %


\subsection{Connectivity bounds for a bridge-addable class}

\label{sec.comps}

  We start with a lemma taken from~\cite{cmcd12}, which will be used several times in this paper.
  Part (a) is a special case of Theorem~2.1 in~\cite{cmcd12}, and extends Theorem 2.2 of~\cite{msw05}:
  part (b) is a special case of Theorem~2.2 in~\cite{cmcd12}, and extends Lemmas 2.5 and 2.6 of~\cite{cmcd09}.
  (The paper~\cite{cmcd12} also gives asymptotic versions of these results,
  in the case when the class is closed also under deleting bridges, which match the results for forests described
  in~\ref{subsec.connresults}.)

\begin{lemma} \label{lem.tauconn}
  Let the finite non-empty weighted set $\ca,\tau$ of graphs be bridge-addable,
  and let $R \in_{\tau} \ca$.
  Then\\ (a) $\kappa(R)$ is stochastically at most $1+ \Po(\nu/\lambda)$, and so in particular\\
   $\pr(R \mbox{ is connected}) \geq e^{-\nu/\lambda}$;
  and\\ (b) $\E[\frag(R)] < 2  \nu /\lambda$.
\end{lemma}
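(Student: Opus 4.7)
My plan is to prove both parts by double-counting arguments that exploit bridge-addability together with the weight identity $\tau(G+e) = (\lambda/\nu)\,\tau(G)$, valid whenever $e$ joins two different components of $G$.

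For part (a), write $\ca_{n,k}$ for the subset of $\ca_n$ with exactly $k$ components and $q_k = \tau(\ca_{n,k})$. The key step is to establish the recursion $k\cdot q_{k+1}\leq (\nu/\lambda)\,q_k$ for every $k\geq 1$. To prove it I would double-count pairs $(G,e)$ with $G\in\ca_{n,k+1}$ and $e$ a non-edge of $G$ joining two different components. Letting $c(G)=\sum_{i<j}n_i n_j$ count such edges (where $n_1,\ldots,n_{k+1}$ are the component sizes of $G$), convexity gives $c(G)\geq k(n-k)$, the minimum being reached when one component has $n-k$ vertices and the remaining $k$ are singletons. The injection $(G,e)\mapsto (G+e,e)$ has image in pairs $(G',e)$ with $G'\in\ca_{n,k}$, $e$ a bridge of $G'$, and $G'-e\in\ca$; since all bridges of a $k$-component graph lie inside a spanning forest of $n-k$ edges, each $G'\in\ca_{n,k}$ contributes at most $n-k$ such bridges. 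Applying the weight identity and cancelling the common factor $(n-k)$ gives the recursion.

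Writing $\mu=\nu/\lambda$ and $p_k = \Pr(\kappa(R)=k) = q_k/\tau(\ca_n)$, the recursion translates to $p_{k+1}/p_k \leq \mu/k$ for $k\geq 1$. The pmf $r_k = e^{-\mu}\mu^{k-1}/(k-1)!$ of $1+\Po(\mu)$ satisfies $r_{k+1}/r_k = \mu/k$ exactly, so $p_k/r_k$ is non-increasing. Since both $(p_k)$ and $(r_k)$ are probability distributions on $\{1,2,\ldots\}$, the monotone likelihood ratio property delivers the stochastic domination of $\kappa(R)$ by $1+\Po(\mu)$; comparing total masses then forces $p_1 \geq r_1 = e^{-\mu}$, which is the connectivity bound in (a).

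For part (b) the strategy is parallel but more delicate. I would count triples $(G,v,u)$ with $v\in\Frag(G)$ and $u$ in a component of $G$ different from that of $v$, so that $G+uv\in\ca$ by bridge-addability and the same weight identity applies. Since any non-biggest component $C_v$ satisfies $|C_v|\leq n/2$, there are at least $n/2$ valid choices of $u$ for each $v$, producing a lower bound of order $(n/2)\sum_G \tau(G)\frag(G)$ for the weighted triple sum. On the image side each bridge of the target graph contributes at most two triples (one orientation per endpoint), and combined with the bridge-counting bound used in (a) this controls the upper side. The sharp constant $2\nu/\lambda$ emerges from a careful accounting of which bridges admit both versus only one valid orientation, exploiting that whenever removing a bridge leaves $\Bigc(G')$ intact on one side only one endpoint can lie in $\Frag$. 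The main obstacle is precisely this sharp bookkeeping: a naive count yields only $\leq 4\nu/\lambda$, and saving the extra factor of two requires the refined case analysis of how bridges interact with the identity of the biggest component.
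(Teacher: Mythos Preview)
The paper does not give its own proof of this lemma: it is quoted from~\cite{cmcd12} (parts (a) and (b) being special cases of Theorems~2.1 and~2.2 there), so there is nothing in the present paper to compare against line by line.  I will therefore assess your proposal directly.

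Your argument for part~(a) is correct and is essentially the standard one.  One small inaccuracy: with $k+1$ parts and the extremal split $(n-k,1,\ldots,1)$ the value of $c(G)=\sum_{i<j}n_in_j$ is $k(n-k)+\binom{k}{2}$, not $k(n-k)$; but since this still exceeds $k(n-k)$ your inequality $c(G)\ge k(n-k)$ is valid and the cancellation against the bridge bound $b(G')\le n-k$ goes through.  The monotone likelihood–ratio step is fine.

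For part~(b) your plan is incomplete, as you yourself flag: the triple count with ``$v\in\Frag(G)$, $u$ in another component'' together with the crude bound ``at most two orientations per bridge'' only gives $\E[\frag(R)]<4\nu/\lambda$, and the ``refined case analysis'' you allude to is genuinely awkward because a bridge of $G'$ can have \emph{both} endpoints lying in $\Frag(G'-e)$ (when the biggest component of $G'-e$ is a third component not meeting the bridge).  The clean way to reach the sharp constant is to change both sides of the count simultaneously.  Double–count \emph{unordered} cross–pairs: with $c(G)=\sum_{i<j}n_in_j$ the injection $(G,\{u,v\})\mapsto(G+uv,\{u,v\})$ and the bridge bound $b(G')\le n-1$ give
\[
\E[c(R)]\ \le\ (n-1)\,\frac{\nu}{\lambda}.
\]
On the other side use the elementary inequality $\sum_i n_i^2\le \bigc(G)\cdot\sum_i n_i=\bigc(G)\,n$, which yields
\[
c(G)=\tfrac12\Bigl(n^2-\sum_i n_i^2\Bigr)\ \ge\ \tfrac12\,n\bigl(n-\bigc(G)\bigr)=\tfrac{n}{2}\,\frag(G).
\]
Combining the two displays gives $\E[\frag(R)]\le \tfrac{2(n-1)}{n}\cdot\nu/\lambda<2\nu/\lambda$.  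This replaces your proposed bridge-by-bridge case analysis by a single convexity-type inequality and recovers the strict bound with no extra work.
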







\subsection{Growth constant for an addable class}
\label{subsec.addable-gc}

  The following result is an extension of 
  Proposition 1.1 in~\cite{cmcd09}, and its proof follows similar lines.

\begin{lemma} \label{lem.addable-gc}
  Let $\ca$ be a non-empty addable subclass of a proper minor-closed class of graphs,
  and consider any weighting $\tau$.
  Then there is a constant $\gamma$ with $0<\gamma<\infty$, which is independent of $\nu$,
  such that $(\tau(\ca_n)/n!)^{1/n} \to \gamma$ as $n \to \infty$.
\end{lemma}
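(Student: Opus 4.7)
The plan is to apply Fekete's lemma in its supermultiplicative form to the sequence $a_n := \tau(\ca_n)/n!$. Supermultiplicativity is immediate from addability: if $G_1 \in \ca_m$ and $G_2 \in \ca_{n-m}$ live on disjoint labelled vertex sets, then by decomposability the disjoint union $G_1 \cup G_2$ lies in $\ca_n$, and since edges and components simply add over disjoint unions, $\tau(G_1 \cup G_2)=\tau(G_1)\tau(G_2)$. Summing over the $\binom{n}{m}$ ways of splitting $[n]$ into ordered parts of sizes $m$ and $n-m$ yields $\tau(\ca_n) \geq \binom{n}{m}\tau(\ca_m)\tau(\ca_{n-m})$, and dividing by $n!$ gives $a_n \geq a_m a_{n-m}$. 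By Fekete's lemma, $a_n^{1/n}$ converges to $\gamma := \sup_m a_m^{1/m} \in [0,\infty]$, and since $\ca$ is non-empty some $a_n$ is strictly positive, forcing $\gamma>0$.

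The main obstacle is to show $\gamma<\infty$, and here we exploit the hypothesis that $\ca$ is contained in a proper minor-closed class $\cb$. By the theorem of Norine, Seymour, Thomas and Wollan on the `smallness' of proper minor-closed families, there is a constant $c_1=c_1(\cb)$ with $|\cb_n|\leq c_1^n\, n!$ for all $n$. By the classical edge-density bounds for graphs excluding a fixed minor (Mader/Kostochka/Thomason) there is also a constant $c_2=c_2(\cb)$ with $e(G)\leq c_2\, v(G)$ for every $G\in\cb$; and trivially $\kappa(G)\leq v(G)$. Hence for every $G\in\ca_n$ we have $\tau(G)\leq \bigl(\max(1,\lambda)^{c_2}\max(1,\nu)\bigr)^n$, so $\tau(\ca_n)\leq |\cb_n|\cdot\bigl(\max(1,\lambda)^{c_2}\max(1,\nu)\bigr)^n\leq C^n\, n!$ for a suitable constant $C$. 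This yields $a_n\leq C^n$ and therefore $\gamma\leq C<\infty$.

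For the independence from $\nu$ I would invoke the observation from Section~\ref{subsec.boltzmann}: since $\ca$ is decomposable it in particular contains components, so $\ca,\tau$ and the subclass $\cc,\tau$ of connected graphs in $\ca$ share the same growth constant. But every $G\in\cc_n$ has $\kappa(G)=1$, so $\tau(G)=\nu\,\lambda^{e(G)}$, whence $\tau(\cc_n)=\nu\cdot\tau_{(\lambda,1)}(\cc_n)$. Taking $n$-th roots, the growth constant of $\cc,\tau$ (and hence of $\ca,\tau$) does not depend on $\nu$. The single serious ingredient throughout is the deep upper bound $|\cb_n|\leq c_1^n\, n!$; once that is in hand the weighting contributes only a bounded exponential factor and the argument reduces to the familiar supermultiplicative template used in Proposition~1.1 of \cite{cmcd09}.
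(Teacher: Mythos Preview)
Your supermultiplicativity step is wrong as stated. From a splitting $[n]=S\cup([n]\setminus S)$ with $|S|=m$ and graphs $G_1\in\ca$ on $S$, $G_2\in\ca$ on $[n]\setminus S$, the disjoint union $G_1\cup G_2$ does lie in $\ca_n$ and satisfies $\tau(G_1\cup G_2)=\tau(G_1)\tau(G_2)$; but the map $(S,G_1,G_2)\mapsto G_1\cup G_2$ is far from injective, so summing over all $\binom{n}{m}$ choices of $S$ does \emph{not} give $\tau(\ca_n)\geq\binom{n}{m}\tau(\ca_m)\tau(\ca_{n-m})$. A concrete counterexample: for $\ca$ the class of forests, $n=2$, $m=1$, one has $\tau(\ca_1)=\nu$ and $\tau(\ca_2)=\nu^2+\lambda\nu$, while $\binom{2}{1}\tau(\ca_1)^2=2\nu^2$; the claimed inequality fails whenever $\nu>\lambda$. (The empty graph on $[n]$, for instance, is produced by every choice of $S$.)

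The paper avoids this by passing through the connected graphs $\cc$: if $G_1\in\cc_a$ and $G_2\in\cc_b$ then $G_1\cup G_2$ has exactly two components, so each resulting graph is produced at most twice, giving $\tau(\ca_{a+b})\geq\tfrac12\binom{a+b}{a}\tau(\cc_a)\tau(\cc_b)$. One then uses bridge-addability via Lemma~\ref{lem.tauconn}(a) to get $\tau(\cc_n)\geq e^{-\nu/\lambda}\tau(\ca_n)$, and applies Fekete to $f(n)=\tau(\ca_n)e^{-2\nu/\lambda}/(2\,n!)$. Your upper-bound argument and your argument for independence from $\nu$ are fine and match the paper's; only the lower-bound (supermultiplicativity) step needs this repair.
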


\begin{proof}
  Let $\cc$ be the class of connected graphs in $\ca$.
  Then $\tau(\cc_n) \geq \tau(\ca_n) e^{-\nu/\lambda}$ by Lemma~\ref{lem.tauconn} (a),
  and so for all positive integers $a$ and $b$
\begin{eqnarray*}
  \tau(\ca_{a+b}) & \geq &
  \frac12 {a+b \choose a} \ \tau(\cc_a)\ \tau(\cc_b)\\
  & \geq &
  2  (a+b)! \ \frac{\tau(\ca_a) e^{-\nu/\lambda} }{2\ a!} \ \frac{\tau(\ca_b)
  e^{-\nu/\lambda}}{2\ b!}.
\end{eqnarray*}
  Thus, if we set $f(n)= \frac{\tau(\ca_n)\, e^{-2 \nu/\lambda} }{ 2\, n!}$ then
  $f(a+b) \geq f(a) f(b)$, that is $f$ is supermultiplicative.

  Since $\ca$ is a subclass of a proper minor-closed class of graphs, there is a constant $c_1$
  such that $|\ca_n| \leq n! c_1^n$, see~\cite{nstw06} (or~\cite{dn10} for a different proof).
  Also there is a constant $c_2$ such that each graph in $\ca$ has average degree at most $c_2$,
  by a result of
  Mader~\cite{mader68} (see also for example Diestel~\cite{diestel05}).
  Hence
\[
  \tau(\ca_n) \leq |\ca_n| \max\{1, \lambda^{n-1}\} \max\{ 1, \lambda_1^{c_2 n/2}\}
  \max \{\nu, \nu^n \}. \]
  Thus 
  $\gamma = \sup_n f(n)^{1/n}$ satisfies $0<\gamma<\infty$;
  and since $f$ is supermultiplicative it follows by Fekete's lemma
  (see for example~\cite{lint-wilson} Lemma 11.6) that
  as $n \to \infty$ we have $f(n)^{1/n} \to \gamma$ and so also $(\tau(\ca_n)/n!)^{1/n} \to \gamma$.
  Finally note that $\gamma$ cannot depend on $\nu$ since $\ca$ contains all components
  (see the discussion following~(\ref{eqn.exp})).
\end{proof}



\subsection{Growth constant for $\gs,\tau$}
\label{subsec.gs-gc}

  Since the class $\cp$ of planar graphs is minor-closed and addable,
  we know from the last subsection that, with any weighting $\tau$, the weighted graph class 
  $\cp,\tau$ has a growth constant $\gamma(\cp,\tau)$, which does not depend on $\nu$.
  We may show that $\gs,\tau$ has the same growth constant $\gamma(\cp,\tau)$,
  by induction on the Euler genus of $S$, following the treatment of the uniform case in~\cite{cmcd08}.


\subsection{Freely-addable graphs and dichotomous graph classes}
\label{subsec.limited}

  It was shown in~\cite{cmcd08} that, for $R_n \inu \gs$ (the uniform case), the probability that $\Frag(R_n)$ is
  non-planar is $O(\ln n / n)$: here we improve and extend this result.
  Let us write $G \succeq_m H$ to mean that $G$ has a minor (isomorphic to) $H$.
  First we give a lemma concerning a single unwanted minor.


%


\begin{lemma} \label{lem.Frag2}
  Let $\ca$ be bridge-addable, let $R_n \in_{\tau} \ca$, let $H$ be a connected graph,
  and let $k$ be a non-negative integer.  Then
\begin{equation}  \label{eqn.bound0}
  \pr( \Frag(R_n) \succeq_m H) \leq \frac{3 \nu k}{2 \lambda} \frac{\ln n}{n} + \frac{6 \nu}{\lambda n} + \pr(R_n \succeq_m (k+1)H).
\end{equation}
  Further if $H$ is 2-connected then we may improve the first term in the bound to
  $\frac{3 \nu k}{2 \lambda v(H)} \frac1{n}$.
\end{lemma}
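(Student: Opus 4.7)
The plan is to reduce the event $\{\Frag(R_n) \succeq_m H\}$ to a main event handled by a bridge-addability switching argument, with two easy contributions split off first.

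Step 1 (Trivial splitting). I bound $\pr(\Frag \succeq_m H,\,R_n \succeq_m (k+1)H) \leq \pr(R_n \succeq_m (k+1)H)$, which matches the third term of the claimed bound. By Lemma~\ref{lem.tauconn}(b), $\E[\frag(R_n)] < 2\nu/\lambda$, so Markov's inequality gives $\pr(\frag(R_n) > n/3) < 6\nu/(\lambda n)$, matching the second term. So it remains to bound the probability of the event $\ce = \{\Frag(R_n) \succeq_m H,\ R_n \not\succeq_m (k+1)H,\ \bigc(R_n) \geq 2n/3\}$ by $\frac{3\nu k}{2\lambda n}\ln n$ (resp.\ by $\frac{3\nu k}{2\lambda v(H) n}$ when $H$ is 2-connected).

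Step 2 (Switching). For each $R \in \ce$, I fix the lex-first component $C = C(R)$ of $\Frag(R)$ carrying an $H$-minor, with $c = v(C)$. For each pair $(u,w) \in V(C) \times V(\Bigc(R))$, bridge-addability gives $R + uw \in \ca_n$ with $\tau(R+uw) = (\lambda/\nu)\tau(R)$; the image per $R$ has size $c \cdot \bigc(R) \geq 2cn/3$. In the 2-connected case I would instead restrict $u$ to the at-least-$v(H)$ vertices of a canonical $H$-model $\mu_0(R)$ within $C$; the 2-connectedness ensures that $\mu_0(R)$ remains an $H$-minor of $R + uw$ for any such $u$, so the image size per $R$ is $\geq v(H) \cdot 2n/3$, independent of $c$.

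Step 3 (Preimage count). For each target $R' \in \ca_n$, a preimage triple $(R,u,w)$ corresponds to a bridge $e = uw$ of $R'$ such that $R = R' - e$ lies in $\ce$ with $C(R)$ equal to the size-$c$ small side $L_e$ of $e$. The crucial claim is that for each fixed $c$ there are at most $k$ such preimages. Indeed, the size-$c$ small sides $L_{e_1},\dots,L_{e_{p_c}}$ of distinct bridges form an antichain in the laminar family of bridge-cuts, hence are pairwise disjoint; the contained $H$-minors $\mu_i \subseteq L_{e_i}$ are therefore pairwise disjoint in $R'$, and each $\mu_i$ remains a minor in every $R_j = R' - e_j$ (since $L_{e_i} \cap L_{e_j} = \emptyset$ and the bridge property forces $e_j$ to have no endpoint in $L_{e_i}$). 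Thus $R_j \succeq_m p_c H$, and the hypothesis $R_j \not\succeq_m (k+1)H$ forces $p_c \leq k$.

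Step 4 (Combining). Writing $x_c = \sum_{R \in \ce,\,c(R)=c}\tau(R)$, Steps~2 and~3 together give $(2cn/3)\,x_c \leq (\nu/\lambda)\,k\,\tau(\ca_n)$, so $x_c \leq \frac{3\nu k}{2\lambda n c}\tau(\ca_n)$. Summing over $c \geq v(H)$ (the $v(H) \leq 1$ case being trivial) and using $\sum_{c \geq 2} 1/c \leq \ln n$ yields $\pr(\ce) \leq \frac{3\nu k}{2\lambda n}\ln n$. In the 2-connected case, the $c$-independent image lower bound collapses the harmonic sum once one establishes a uniform total preimage count of $\leq k$, yielding $\pr(\ce) \leq \frac{3\nu k}{2\lambda v(H) n}$. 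The main obstacle is Step~3, and particularly its 2-connected refinement, which requires care in handling nested small sides by exploiting the rigid structure of the canonical $H$-model $\mu_0$.
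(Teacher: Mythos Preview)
Your argument for the main bound (the $\frac{3\nu k}{2\lambda}\frac{\ln n}{n}$ term) is correct and matches the paper's proof closely: both split off the events $\{R_n\succeq_m(k+1)H\}$ and $\{\frag(R_n)>n/3\}$, then run a switching argument stratified by the size $c$ (the paper's $j$) of the distinguished small component, using the antichain/laminar structure of equal-size bridge small sides to get at most $k$ preimages per size.

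The gap is in the 2-connected refinement, which you explicitly leave as an ``obstacle''. Your idea of pinning $u$ to a canonical $H$-model $\mu_0(R)$ is workable but awkward, because $\mu_0(R)$ depends on the preimage $R=R'-e$, and for the uniform preimage count you now need to compare models across nested small sides of \emph{different} sizes. The cleaner route --- and the one the paper takes --- is to use blocks. Since $H$ is 2-connected, any $H$-model in $\Frag(R)$ lies inside a single block $B$ of $R$ (hence of $R'$), and $|B|\ge v(H)$; so choose $u\in B$ rather than $u\in\mu_0(R)$. For the preimage count, you now bound the number of oriented bridges $e=wv$ of $R'$ with big side $\ge 2n/3$ and small-side endpoint $v$ lying in a block with an $H$-minor. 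If $e_i,e_j$ are two such bridges with small sides $L_i\subsetneq L_j$, then $v_j\notin L_i$ (the path $v_j-u_j-\cdots$ reaches the big side of $e_i$ without using $e_i$), and since the block $B_j\ni v_j$ is 2-connected it cannot cross the bridge $e_i$, so $B_j\cap L_i=\emptyset$ and hence $B_i\cap B_j=\emptyset$. The disjoint-$L$ case is immediate. Thus the blocks $B_i$ are pairwise vertex-disjoint, and since none of them contains any bridge they all survive in $G=R'-e$, forcing $G\succeq_m (k+1)H$ if there were $k+1$ of them. This gives the uniform preimage bound of $k$ with no stratification by $c$, and the harmonic sum disappears.

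A small remark on your Step~2: the sentence ``the 2-connectedness ensures that $\mu_0(R)$ remains an $H$-minor of $R+uw$'' is not where 2-connectedness is used (that survival is automatic since $R\subseteq R+uw$); it is used precisely to place the model inside a block, which is what drives the preimage argument above.
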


\medskip

\begin{proof} 
  Let $\omega=\omega(n)= \lfloor n/3 \rfloor$.
  Let $\cb^{j}_n$ be the class of graphs $G \in \ca_n$ such that $G \in \ex(k+1)H$,
  $\frag(G) \leq \omega$, $\Frag(G) \succeq_m H$, and there are exactly $j$ vertices in the
  lex-first component $C$ of $\Frag(G)$ with minor $H$.
  Given a graph $G \in \cb^{j}_n$, add any edge between this component $C$ and a vertex in $\Bigc(G)$, to form $G'$.
  This gives $j \cdot \bigc(G) \geq j \cdot 2n/3$ constructions of graphs $G' \in \ca$.

  Each graph $G'$ constructed can have at most $k$ (oriented) bridges $uv$ such that the component of $G'-uv$
  containing $u$ has order at least $2n/3$, and the component containing $v$ has order $j$ and has a minor $H$
  (since otherwise the original graph $G$ is not in $\ex(k+1)H$).
  Thus $G'$ can be constructed at most $k$ times.  Hence
\[
  j \frac{2n}{3} \ \frac{\lambda}{\nu} \tau(\cb^j_n) \leq k \tau(\ca_n)
\]
  so
\[
  \tau(\cb^j_n) \leq \frac{1}{j} \frac{3 \nu k}{2 \lambda n} \tau(\ca_n).
\]
  Let $\cb_n= \cup_{j \leq \omega} \cb^j_n$.  Since $\sum_{v(H) \leq j \leq \omega} 1/j \leq \ln \omega$ we have
\[
  \tau(\cb_n) \leq \frac{3 \nu k}{2 \lambda} \frac{\ln n}{n} \tau(\ca_n).
\]
  Thus
\begin{eqnarray}
  && \tau(\{G \in \ca_n, \Frag(G) \succeq_m H\})\\ \nonumber
  & \leq &
   \frac{3 \nu k}{2 \lambda} \frac{\ln n}{n} \tau(\ca_n)
   + \tau(\{G\!\in\!\ca_n\!: \frag(G)\!>\!\omega\}) + 
   \tau(\{G\!\in\!\ca_n\!: G \succeq_m (k\!+\!1)H \}) \label{eqn.bound1}
\end{eqnarray}
  But recall from Lemma~\ref{lem.tauconn} (b) that $\E[\frag(R_n)] < 2 \nu/\lambda$,
  and so $\pr(\frag(R_n) > n/3) < \frac{6 \nu}{\lambda n}$.
  Hence we may divide by $\tau(\ca_n)$ in~(\ref{eqn.bound1}) to obtain~(\ref{eqn.bound0}).
  \smallskip
 
  Now suppose that $H$ is 2-connected. 
  Let $G \in {\cb}_n$, where $\cb$ is as above.  Then some block of some component
  of $\Frag(G)$ has a minor $H$.
  Add any edge between a vertex in such a block and a vertex in $\Bigc(G)$, to form $G'$.
  This gives at least $v(H) \cdot {\bigc(G)} \geq 2 v(H) n/3$ constructions of graphs $G' \in \ca$.

  Each graph $G'$ constructed can have at most $k$ (oriented) bridges $uv$ such that the component of $G'-uv$
  containing $u$ has order at least $2n/3$, and the component containing $v$ is in a block with a minor $H$.
  Thus $G'$ can be constructed at most $k$ times. Hence
\[
   v(H) \frac{2n}{3} \ \frac{\lambda}{\nu} \tau(\tilde{\cb}_n) \leq k \tau(\ca_n)
\]
  and we may proceed as before.
\end{proof}
  \medskip

  \noindent
  Recall that $\cf_{\ca}$ denotes the class of graphs which are freely addable to~$\ca$.  The last
  lemma yields:

\begin{lemma} \label{lem.Frag}
  Let $\ca$ be minor-closed, bridge-addable and dichotomous; and let $R_n \in_{\tau} \ca$ for $n=1,2,\ldots$.
  Then there is a constant $c$ such that 
\[
  \pr(\Frag(R_n) \not\in \cf_{\ca}) \leq \frac{c \nu}{n \lambda} \;\;\; \mbox{ for each } \;n.
\]
\end{lemma}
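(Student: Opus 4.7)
The plan is to reduce the event $\{\Frag(R_n)\notin\cf_{\ca}\}$ to a finite union of 2-connected excluded-minor events, and then apply the improved (2-connected) half of Lemma~\ref{lem.Frag2} to each. Since $\ca$ is minor-closed and bridge-addable, $\cf_{\ca}$ is minor-closed and addable (as observed in Section~\ref{subsec.dichotomous}), so by Robertson--Seymour it has finitely many excluded minors, say $E_1,\dots,E_r$. I would first argue that each $E_j$ is 2-connected: a disconnected excluded minor is ruled out by decomposability of $\cf_{\ca}$, and a cut vertex $v$ splitting $E_j$ into connected parts $E_j^{(1)}, E_j^{(2)}$ is ruled out by adding a bridge between the two copies of $v$ in $E_j^{(1)}\sqcup E_j^{(2)}$ (which lies in $\cf_{\ca}$ by decomposability together with bridge-addability) and then contracting, exhibiting $E_j$ itself as a minor of a graph in $\cf_{\ca}$.

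Given this, since $\cf_{\ca}$ is minor-closed, $\Frag(R_n)\notin\cf_{\ca}$ is equivalent to $\Frag(R_n)\succeq_m E_j$ for some $j$, so by a union bound it suffices to show each $\pr(\Frag(R_n)\succeq_m E_j) = O(\nu/(\lambda n))$. Fix~$j$. If $E_j\notin\ca$ then $R_n\not\succeq_m E_j$ and the probability is~$0$. Otherwise $E_j\in\ca\setminus\cf_{\ca}$, so the dichotomy hypothesis on $\ca$ forces $E_j$ to be limited in $\ca$; pick the least $k_j\geq 1$ with $(k_j+1)E_j\notin\ca$, so $\pr(R_n\succeq_m(k_j+1)E_j)=0$. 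Since $E_j$ is 2-connected, the sharper form of Lemma~\ref{lem.Frag2} (applied with $H=E_j$ and $k=k_j$) then yields
\[
  \pr(\Frag(R_n)\succeq_m E_j) \;\leq\; \frac{3\nu k_j}{2\lambda\,v(E_j)}\cdot\frac{1}{n} + \frac{6\nu}{\lambda n}.
\]
Summing over the finitely many $j$ gives the lemma with $c=\sum_{j}\bigl(\tfrac{3k_j}{2v(E_j)}+6\bigr)$.

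The main obstacle is establishing 2-connectedness of the excluded minors of $\cf_{\ca}$: without it, Lemma~\ref{lem.Frag2} only delivers an $O((\log n)/n)$ bound and the desired $1/n$ rate fails. This step is also the precise place where each hypothesis is used: bridge-addability of $\ca$ is exactly what makes $\cf_{\ca}$ addable and thereby forces 2-connected obstructions, while the dichotomy hypothesis is exactly what converts ``$E_j\in\ca\setminus\cf_{\ca}$'' into a finite limitation constant $k_j$ killing the $\pr(R_n\succeq_m (k_j+1)E_j)$ term in Lemma~\ref{lem.Frag2}.
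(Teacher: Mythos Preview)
Your proposal is correct and follows essentially the same route as the paper: reduce to the finitely many excluded minors of $\cf_{\ca}$, use the dichotomy to get a limitation constant for each one lying in $\ca$, and apply the 2-connected case of Lemma~\ref{lem.Frag2}. The only difference is cosmetic: where the paper simply invokes the characterisation (stated in the introduction) that an addable minor-closed class has 2-connected excluded minors, you give a direct self-contained argument for this fact.
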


\begin{proof} 
  As we noted in Section~\ref{subsec.dichotomous}, $\cf_{\ca}$ is minor-closed and addable,
  with a finite set of excluded minors each of which is 2-connected.
  Thus it suffices to prove that for each excluded minor $K$ for $\cf_{\ca}$ there is a constant $c_K$ such that
  $\pr(\Frag(R_n) \succeq_m K) \leq \frac{c_K \nu}{n \lambda}$ for each $n$.
  But if $K \in \ca$ then $K$ must be limited in $\ca$ since $\ca$ is dichotomous,
  so there is a $k$ such that $(k+1) K \not\in \ca$;
  and now we may use Lemma~\ref{lem.Frag2} for the 2-connected graph~$K$.
  \end{proof}
  


\subsection{Maintaining at least factorial growth}
\label{subsec.growth}

  We shall want to consider weighted graph classes $\ca,\tau$ (with each vertex degree at least 2) which
  maintain at least factorial growth, as defined in Section~\ref{subsec.maint}.

  It is easy to see that if $\ca, \tau$ has growth constant $\gamma >0$ then
  $\ca, \tau$ maintains at least factorial growth.
  For let $0<\eps<1$ (for example take $\eps=\frac12$) and let $n_0$ be such that
  $(1-\eps)^n n! \gamma^n \leq \tau(\ca_n) \leq (1-\eps)^{-n} n! \gamma^n$
  for each $n \geq n_0$.
  Then for each $n > n_0$ and $1 \leq j \leq n-n_0$,
\[
  \tau(\ca_n) \geq (1-\eps)^n n! \gamma^n = (1-\eps)^n (n-j)! \gamma^{n-j}  (n)_j \gamma^j
  \geq (1-\eps)^{2n} \tau(\ca_{n-j}) (n)_j \gamma^j,
\]
  and~(\ref{eqn.mafg}) follows for each $1 \leq j <n$.
  
  Other graph classes may be shown to maintain at least factorial growth
  even though we may not know whether they have a growth constant, and indeed that is the point of introducing this property.
  We shall see below that
  this holds for example when the class is closed under subdividing edges.   Note in particular that for any given surface $S$,
  the class of graphs $G$ which have minimum degree at least 2 and are embeddable on $S$
  is closed under subdividing edges. 
  



\begin{lemma} \label{lem.subdiv}
  Let $\ca$ be a minor-closed class of graphs, and suppose that each graph in $\ca$ contains at least one edge such
  that the graph obtained by subdividing that edge is still in $\ca$.
  Then for each $\tau$, the weighted class $\ca, \tau$ maintains at least factorial growth.
\end{lemma}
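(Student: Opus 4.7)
The strategy is to use the hypothesis to build many labelled graphs in $\ca_n$ from each graph in $\ca_{n-j}$ via iterated subdivisions, and to combine this with an absolute lower bound coming from linear forests. By Lemma~\ref{lem.malfgnod}, it suffices to establish the maintaining-at-least-factorial-growth inequality~(\ref{eqn.mafg2}) for $1 \leq j \leq \delta n$ for some fixed $\delta > 0$, so I work in that range.

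First I would observe that since $\ca$ is minor-closed and the hypothesis is non-vacuous, $\ca$ contains a graph with an edge, hence (by minor-closure) $K_2 \in \ca$. Applying the hypothesis iteratively starting at $K_2$ and subdividing a good edge each time yields $P_m \in \ca$ for every $m \geq 2$, and by minor-closure every linear forest lies in $\ca$. A standard saddle-point analysis of the EGF $\exp(\nu x + \nu\lambda x^2/(2(1-\lambda x)))$ for weighted linear forests (whose radius of convergence is $1/\lambda$) then gives the absolute lower bound $\tau(\ca_n) \geq n!\,\lambda^n (1+o(1))^n$.

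Next, for each $G \in \ca_{n-j}$, applying the hypothesis $j$ times produces a valid subdivision sequence $G = G_0, G_1, \ldots, G_j \in \ca$, with $\tau(G_j) = \lambda^j \tau(G)$. For each such sequence, choosing a $j$-subset $T \subseteq [n]$ for the subdivision vertices (original vertices mapped to $[n]\setminus T$ in increasing order, and a bijection from $T$ to the new vertices in sequence order) gives $(n)_j$ distinct labelled outputs, for a total forward weight at least $(n)_j \lambda^j \tau(\ca_{n-j})$. The multiplicity of each $G^* \in \ca_n$ is at most $(d_2(G^*))_j$, since reversing the construction requires picking and ordering $j$ degree-2 vertices of $G^*$ to un-subdivide, and all intermediates lie in $\ca$ by minor-closure. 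Since $\ca$ is a proper minor-closed class, Mader's theorem gives $e(G^*) \leq d_0 n$ and hence $d_2(G^*) \leq 2 d_0 n$ for a constant $d_0$, so
\[
\tau(\ca_n) \;\geq\; \tau(\ca_{n-j}) \, \frac{(n)_j}{n^j}\Bigl(\frac{\lambda}{2 d_0}\Bigr)^{\!j}.
\]

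The main obstacle is bridging the remaining gap: the above has factor $\approx \alpha^j$ with $\alpha = \lambda/(2 d_0)$ a constant, whereas~(\ref{eqn.mafg2}) requires factor $(n)_j \eta^j g(n) \approx (n\eta)^j g(n)$. I would handle this by combining the two bounds in a case split driven by the size of $\tau(\ca_{n-j})$: when $\tau(\ca_{n-j}) \leq (n-j)!\lambda^{n-j} e^{\eps n}$, the absolute lower bound on $\tau(\ca_n)$ gives~(\ref{eqn.mafg2}) directly with $\eta = \lambda$ (the $(n)_j$ factor comes from the ratio $n!/(n-j)!$ in the path-based lower bound); otherwise $\tau(\ca_{n-j})$ is much larger than its path contribution, and the Mader upper bound $\tau(\ca_{n-j}) \leq (n-j)! c^{n-j}$ together with the subdivision inequality, iterated and combined with the Lemma~\ref{lem.malfgnod} reduction, forces the required lower bound on $\tau(\ca_n)$. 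A suitable small choice of $\eta > 0$ depending on $\lambda$, $\nu$, $d_0$ and the linear-forest EGF constants, together with $g(n) = (1+o(1))^n$ absorbing the subexponential corrections, then completes the proof.
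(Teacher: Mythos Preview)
Your subdivision-count has a genuine gap. From your forward/backward construction you obtain
\[
\tau(\ca_n) \;\ge\; \tau(\ca_{n-j})\,\frac{(n)_j}{(2d_0 n)^j}\,\lambda^j
\;\approx\; \tau(\ca_{n-j})\,\Bigl(\frac{\lambda}{2d_0}\Bigr)^{j},
\]
which is only a \emph{constant} raised to the $j$, whereas~(\ref{eqn.mafg2}) needs a factor $(n)_j\eta^j g(n)\approx (n\eta)^j g(n)$. Your case split does not repair this: in Case~2 you need, for $j$ up to $\delta n$, something like $(\lambda/2d_0)^j \ge (n)_j\,\eta^j\,e^{-\eps n}$, and for any fixed $\eta>0$ the left side divided by the right tends to $0$ as $n\to\infty$ when $j=\delta n$. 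Invoking Mader's upper bound on $\tau(\ca_{n-j})$ gives no extra leverage here, and ``iterated and combined with Lemma~\ref{lem.malfgnod}'' is not an argument.

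The idea you are missing is that every $G\in\ca$ has an \emph{arbitrarily subdividable} edge: iterate the hypothesis to obtain $G=G_0,G_1,\ldots$ with $G_k\in\ca$, note that each $G_k$ is $G$ with each original edge $e$ subdivided some number $m_e^{(k)}$ of times, and by pigeonhole some edge $e^*$ has $m_{e^*}^{(k)}\to\infty$; by minor-closure (contract the other subdivisions) the graph with \emph{only} $e^*$ subdivided $m_{e^*}^{(k)}$ times is also in $\ca$. Now subdivide this single edge $e^*$ all $j$ times. The $j$ new vertices then form a \emph{path} of degree-$2$ vertices in $G'$, and since $j$ is known, such an ordered path is determined by at most $n^2$ choices (say $v_1$ and $v_j$). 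So the multiplicity is at most $n^2$ rather than $(2d_0 n)^j$, giving
\[
\tau(\ca_n)\;\ge\;\tau(\ca_{n-j})\,(n)_j\,\lambda^j\cdot n^{-2},
\]
which is exactly~(\ref{eqn.mafg}) with $\eta=\lambda$ and $g(n)=n^{-2}$. This is the paper's route; your linear-forest lower bound and case split are then unnecessary.
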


\begin{proof} \hspace{.1in}
  Call an edge $e$ in a graph $G \in \ca$ 
  {\em arbitrarily subdividable} if $\ca$ contains each graph obtained by repeatedly subdividing $e$.
  Observe that each graph $G \in \ca$ must contain an arbitrarily subdividable edge.
  
  Let $1 \leq j < n$ be such that $\ca_{n-j}$ is non-empty. Pick a set $S$ of $j$ vertices
  from $[n]$, and list them as $v_1,v_2,\ldots,v_j$.
  Pick a graph $G$ in $\ca$ on $[n]\setminus S$ and an arbitrarily subdividable edge $e= uw$
  where $u<w$, and replace $e$ by the path $u v_1 v_2 \cdots v_j w$, to form $G'$.
  The weighted number of constructions is at least $\tau(\ca_{n-j}) \ (n)_j \eta^j$,
  where $\eta= \lambda$. 

  How often can a given graph $G'$ on $[n]$ be constructed?
  In $G'$ there must be a path $v_1,v_2,\ldots,v_j$ of vertices of
  degree 2, where $j$ is known.
  The number of such paths is at most $n^2$.
  But knowing the path determines $G$, $e$ and the list, so $G'$ can be
  constructed at most $n^2$ times.  Hence
\[
  \tau(\ca_n) \geq  \tau(\ca_{n-j}) (n)_j \eta^j \  g(n),
\]
  where   $g(n)=n^{-2}$, 
  and the lemma follows.
\end{proof}
  \medskip

  The above lemma shows that a weighted class of graphs embeddable on a given surface
  maintains at least factorial growth, as long as we exclude the edgeless graphs
  (and there is just one such for each order $n$).
  In general, to show that a class $\ca$ maintains at least factorial growth, it suffices to look at a subclass~$\cb$,
  as long as $\cb$ does not form too small a proportion of the graphs in $\ca$.

\begin{lemma} \label{lem.growth-subclass}
  Let the class $\ca$ of graphs contain a subclass $\cb$ such that
  $\tau(\cb_n) = (1+o(1))^n \tau(\ca_n)$ and $\cb, \tau$ maintains at least factorial growth.
  Then $\ca, \tau$ maintains at least factorial growth.
\end{lemma}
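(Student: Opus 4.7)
The plan is to deduce the factorial-growth inequality for $\ca,\tau$ from the one already known for $\cb,\tau$ by sandwiching through $\tau(\cb_n)$, absorbing the two $(1+o(1))^n$ discrepancies into a single exponential error term. I will use the equivalent form~(\ref{eqn.mafg2}) of the definition, and aim to establish it for $\ca,\tau$ with the same constant $\eta=\eta_{\cb}$ that witnesses the hypothesis for $\cb,\tau$.

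Fix $\eps>0$. From the hypothesis $\tau(\cb_m)=(1+o(1))^m\tau(\ca_m)$ together with $\cb\subseteq\ca$, there is an $N_{0}$ such that
\[ e^{-\eps m/3}\,\tau(\ca_m)\;\le\;\tau(\cb_m)\;\le\;\tau(\ca_m) \qquad\text{for all } m\ge N_{0}.\]
Enlarging $N_{0}$ if necessary, the hypothesis that $\cb,\tau$ maintains at least factorial growth, taken in the form~(\ref{eqn.mafg2}) with error $\eps/3$, yields
\[ \tau(\cb_n)\;\ge\;\tau(\cb_{n-j})\,(n)_j\,\eta_{\cb}^{\,j}\,e^{-\eps n/3} \qquad\text{for all } n\ge N_{0}\text{ and } 1\le j<n. \]
For $n\ge N_{0}$ and $1\le j\le n-N_{0}$ these can be chained to give
\[
\tau(\ca_n)\;\ge\;\tau(\cb_n)\;\ge\;\tau(\cb_{n-j})\,(n)_j\,\eta_{\cb}^{\,j}\,e^{-\eps n/3}\;\ge\;\tau(\ca_{n-j})\,(n)_j\,\eta_{\cb}^{\,j}\,e^{-2\eps n/3},
\]
which is stronger than the bound required by~(\ref{eqn.mafg2}) with $\eta=\eta_{\cb}$ and error~$\eps$.

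The main obstacle is that the middle inequality $\tau(\cb_{n-j})\ge e^{-\eps(n-j)/3}\tau(\ca_{n-j})$ is only valid once $n-j\ge N_{0}$, so the chain above breaks down for $j$ in the short top range $j>n-N_{0}$, where $n-j$ stays bounded as $n$ grows and the ratio $\tau(\cb_{n-j})/\tau(\ca_{n-j})$ is not controlled by the $(1+o(1))^n$ hypothesis (indeed $\tau(\cb_{n-j})$ might even vanish for such small values). This is exactly where I invoke Lemma~\ref{lem.malfgnod}, cited immediately after~(\ref{eqn.mafg2}): in order to conclude that $\ca,\tau$ maintains at least factorial growth it suffices to verify~(\ref{eqn.mafg2}) on the range $1\le j\le\delta n$ for some fixed $\delta>0$. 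Taking $\delta=1/2$ (say), the restriction $j\le\delta n$ ensures $n-j\ge n/2\ge N_{0}$ for all sufficiently large $n$, so the chain above applies uniformly on this range and the lemma follows.
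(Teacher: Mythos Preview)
Your proof is correct and follows essentially the same sandwiching idea as the paper's, but the two diverge in how the small-$n{-}j$ range is handled. The paper works with the $g$-function form~(\ref{eqn.mafg}) throughout: it sets $g_2(m)=\tau(\cb_m)/\tau(\ca_m)$, defines $g_3(n)=\min_{1\le j<n}g_2(n-j)$, asserts that $g_3(n)=(1+o(1))^n$, and then the single product $g_4(n)=g_1(n)g_3(n)$ witnesses~(\ref{eqn.mafg}) for \emph{all} $1\le j<n$ at once, with no appeal to Lemma~\ref{lem.malfgnod}. Your route instead restricts to $j\le n/2$ and invokes Lemma~\ref{lem.malfgnod} to finish. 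Your version is arguably more careful on one point: the paper's claim that $g_3(n)=(1+o(1))^n$ tacitly needs $g_2(m)>0$ for every small $m$ (so that the finitely many initial values do not drag the minimum down to zero), whereas your use of Lemma~\ref{lem.malfgnod} sidesteps that issue entirely. Conversely, the paper's argument is self-contained and does not rely on the auxiliary lemma.
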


\begin{proof}
  Let $\eta>0$ and the function $g_1(n)=(1+o(1))^n$ be such that for each $n$ and
  each $j$ with $1 \leq j < n$ we have
\begin{equation} \label{eqn.cacb1}
  \tau(\cb_n) \geq  \tau(\cb_{n-j}) \ (n)_j \ \eta^j \ g_1(n).
\end{equation}
  Let $g_2(n) = \tau(\cb_n)/\tau(\ca_n)$, so that $g_2(n)=(1+o(1))^n$.
  Next let $g_3(n) = \min_{1 \leq j < n} g_2(n-j)$.  Then it is easy to check that
  $g_3(n)=(1+o(1))^n$. But for each $j$ with $1 \leq j < n$,
\begin{equation} \label{eqn.cacb2}
  \tau(\cb_{n-j})=g_2(n-j) \tau(\ca_{n-j}) \geq g_3(n) \tau(\ca_{n-j}).
\end{equation}
  Now $\tau(\ca_n) \geq \tau(\cb_n)$, and so by~(\ref{eqn.cacb1}) and~(\ref{eqn.cacb2})
\[
  \tau(\ca_n) \geq \tau(\ca_{n-j}) \ (n)_j \ \eta^j \ g_4(n)
\]
  where $g_4(n) = g_1(n) g_3(n) =(1+o(1))^n$.
\end{proof}
  \medskip
  
  Finally here let us check that a seemingly more `local' and weaker condition implies that
  $\ca$ maintains at least factorial growth (as defined at~(\ref{eqn.mafg2})).
  
\begin{lemma} \label{lem.malfgnod}
  Assume that $\ca_n \neq \emptyset$ for infinitely many $n$.
  Let $0<\delta \leq 1$, $0<\eta\leq 1$ and $g(n)=(1+o(1))^n$; and
  suppose that 
\[ \tau(\ca_n) \geq  \tau(\ca_{n-j}) \ (n)_j \ \eta^j \ g(n) \]
   for each sufficiently large $n$ and each $1 \leq j \leq \delta n$. Then
  $\ca$ maintains at least factorial growth (with the same $\eta$).
\end{lemma}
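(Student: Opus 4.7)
The plan is to bootstrap the local condition (valid only for $j \leq \delta n$) to the full condition by chaining together short steps, the way one boots a chain rule. The key telescoping identity is
\[ \prod_{i=1}^K (n_{i-1})_{n_{i-1}-n_i} = (n)_{n_0 - n_K}, \qquad \sum_{i=1}^K (n_{i-1}-n_i) = n_0 - n_K, \]
so the $(n)_j$ and $\eta^j$ factors in the target inequality accumulate \emph{exactly}; only the subexponential $g$--errors need to be controlled. Concretely, I will aim to verify the equivalent form~(\ref{eqn.mafg2}): fix $\eps>0$ and show that, for all sufficiently large $n$ and every $1\leq j < n$,
\[ \tau(\ca_n) \geq \tau(\ca_{n-j})\,(n)_j\,\eta^j\,e^{-\eps n}. \]
For $j \leq \delta n$ this follows at once from the hypothesis together with $g(n) \geq e^{-\eps n}$ (true for large $n$ because $g(n) = (1+o(1))^n$).

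For $j > \delta n$ I will build a decreasing chain $n = n_0 > n_1 > \cdots > n_K = n-j$ with every step obeying $n_{i-1}-n_i \leq \delta n_{i-1}$, for example by taking $n_i = \lceil (1-\delta/2)^i n\rceil$ while $n_i > n-j$ and then setting the final $n_K = n-j$. The small margin $\delta/2$ absorbs integer rounding, and one checks that the adjusted last step still satisfies the required fraction bound once $n_{K-1}$ is large. Applying the hypothesis at each step and multiplying yields
\[ \tau(\ca_n) \geq \tau(\ca_{n-j})\,(n)_j\,\eta^j \prod_{i=1}^{K} g(n_{i-1}), \]
by the telescoping identity above.

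To control $\prod_i g(n_{i-1})$, use that $g(m) = e^{o(m)}$: for any $\eps_1>0$ and $m$ above some threshold $M(\eps_1)$, we have $g(m) \geq e^{-\eps_1 m}$. The $n_{i-1}$ are a (truncated) geometric sequence with ratio $\leq 1-\delta/2$, so $\sum_i n_{i-1} \leq 2n/\delta + O(1)$. Choosing $\eps_1 := \eps\delta/4$ gives $\prod_i g(n_{i-1}) \geq e^{-\eps n /2}$ provided every $n_{i-1}$ is at least $M(\eps_1)$, which is the case once $n-j$ is above this threshold.

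The main obstacle is the boundary regime where $n-j$ is small (below $M(\eps_1)$), so the chain can no longer be kept entirely above the threshold where the hypothesis applies. I will handle it by a separate auxiliary bound: applying the hypothesis \emph{upward} along an analogous geometric chain starting from some fixed $m_0$ with $\tau(\ca_{m_0})>0$ (which exists because $\ca_n\neq\emptyset$ infinitely often), the same telescoping gives a constant $c_0>0$ with
\[ \tau(\ca_n) \;\geq\; c_0\, n!\,\eta^n\,e^{-\eps n/2} \quad\text{for all large } n. \]
When $n-j \leq M(\eps_1)$, the quantity $\tau(\ca_{n-j})$ is bounded by a constant $C$, so
\[ \tau(\ca_{n-j})\,(n)_j\,\eta^j\,e^{-\eps n} \;\leq\; C\cdot n!\,\eta^n\,e^{-\eps n}, \]
which is dominated by the auxiliary lower bound on $\tau(\ca_n)$ for $n$ large. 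This closes the boundary case and completes the proof. The only genuinely delicate points are the integer-rounding verification in the chain construction and the threshold bookkeeping in this boundary argument; everything else is a direct telescoping exercise.
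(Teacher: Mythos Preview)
Your argument is correct and follows essentially the same route as the paper: a geometric chain $n_i \approx (1-\delta)^i n$ whose telescoping reproduces $(n)_j\eta^j$ exactly, with $\prod_i g(n_{i-1})$ controlled via the geometric sum $\sum_i n_{i-1} = O(n/\delta)$; and a separate boundary treatment when $n-j$ drops below threshold, handled by anchoring at a fixed index $m_0$ (the paper's $N$) with $\ca_{m_0}\neq\emptyset$ and absorbing the finitely many small-index ratios into a constant.

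One small slip in your final displayed inequality: from ``$\tau(\ca_{n-j})$ is bounded by a constant $C$'' you conclude $\tau(\ca_{n-j})(n)_j\eta^j e^{-\eps n} \leq C\, n!\,\eta^n e^{-\eps n}$, which would require $(n)_j\eta^j \leq n!\,\eta^n$, i.e.\ $(n-j)!\,\eta^{n-j}\geq 1$; this can fail when $\eta<1$. The fix is immediate: write $(n)_j\eta^j = n!\,\eta^n\big/\big((n-j)!\,\eta^{n-j}\big)$ and absorb the bounded factor $1/\big((n-j)!\,\eta^{n-j}\big)$ (finitely many values of $n-j$) into the constant. The paper avoids this by bounding the ratio $\tau(\ca_N)/\tau(\ca_{N-k})$ directly rather than passing through an absolute bound on $\tau(\ca_{n-j})$.
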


\begin{proof}
  Let $0<\eps \leq 1$. 
  Let $N_0$ be sufficiently large that $g(n) \geq (1-\delta \eps/2)^n$ for all $n \geq N_0$.
  Let $N \geq N_0$ be such that $\ca_{N} \neq \emptyset$.
  Let us first show that
\begin{equation} \label{eqn.maint1}
  \tau(\ca_n) \geq  (n)_j \ \eta^j e^{-\eps n} \cdot \tau(\ca_{n-j}) 
\end{equation}
  for each $n >N$ and  $j=1,\ldots,n-N$.
   
  %
  Let $n>N$ and $1 \leq j \leq n-N$.
  Let $n_i= \lfloor (1-\delta)^i n \rfloor$ for $i=0,1,2,\ldots$.  Let $i_0$ be the least $i$ such that
  $n_i \leq n-j$, and redefine $n_{i_0}$ as $n-j$.
  Observe that $\sum_{i=0}^{i_0-1} n_i \leq n \sum_{i \geq 0} (1-\delta)^i = n/\delta$.
  Note also that 
  $(1-x/2) \geq e^{-x}$ for $0<x \leq 1$ and so
  $(1-\delta \eps/2)^{1/\delta} \geq e^{-\eps}$.
  Hence
\[ \prod_{i=0}^{i_0-1} g(n_i) \geq (1-\delta \eps/2)^{\sum_{i=0}^{i_0-1} n_i} \geq (1-\delta \eps/2)^{n/\delta}
  \geq e^{-\eps n}. \]
    Now
\begin{eqnarray*}
  \tau(\ca_n)
    & \geq &
  \prod_{i=0}^{i_0-1} (n_i)_{(n_i - n_{i+1})} \eta^{n_i - n_{i+1}} g(n_i) \cdot \tau(\ca_{n_{i_0}})\\
     & = &
  (n)_{j}\ \eta^{j} \cdot \prod_{i=0}^{i_0-1} g(n_i) \cdot  \tau(\ca_{n-j})\\
     & \geq &
  (n)_{j}\ \eta^{j} e^{-\eps n} \cdot  \tau(\ca_{n-j}),
\end{eqnarray*}
   as required, and we have proved~(\ref{eqn.maint1}).

  Let $\alpha = \max_{1 \leq i \leq N} \{ \tau(\ca_i)/\tau(\ca_{N}) \}$.  Let $c = (\alpha N!)^{-1}$,
  and note that $0<c\leq 1$.  We want to establish~(\ref{eqn.mafg2}).  It will suffice to show that for each $n>N$,
\begin{equation} \label{eqn.maint2}
  \tau(\ca_n) \geq  \tau(\ca_{n-j})\, (n)_j \ \eta^j e^{-\eps n} \cdot c
\end{equation}
  for each $n >N$ and  $j=1,\ldots,n-1$.
  Since $c \leq 1$ this is immediate from~(\ref{eqn.maint1}) for $j \leq n-N$.  So assume that $n-N<j <n$,
  and let $k=j-(n-N)$ so $1 \leq k \leq N-1$.   
  By~(\ref{eqn.maint1}) with $j=n-N$,
\[ \tau(\ca_n) \geq  \tau(\ca_{N})\, (n)_{n-N} \ \eta^{n-N} e^{-\eps n}. \] 
  But
\[ \tau(\ca_N) \geq \tau(\ca_{N-k}) \cdot \alpha^{-1} \geq \tau(\ca_{N-k}) (N)_k \eta^k \cdot c. \]
  Putting the last two inequalities together gives 
\[  \tau(\ca_n) \geq  \tau(\ca_{n-j}) \, (n)_j \ \eta^j e^{-\eps n} \cdot c \]
  as required.   
\end{proof}

\subsection{Being well-behaved: proof of Lemma~\ref{lem.wellb}}
\label{subsec.l2.1}

  If $\ca$ is an addable proper minor-closed class of graphs then $\ca$ is decomposable,
  and $\ca,\tau$ has a growth constant by lemma~\ref{lem.addable-gc};
  and if $\ca$ is $\gs$, or $\ca$ is the class of graphs which contain at most $k$ vertex-disjoint cycles,
  then $\ca$ is closed under subdividing edges, and 
  $\ca,\tau$ has a growth constant. 
  Also in each case $\ca$ is dichotomous, as we saw in Section~\ref{subsec.dichotomous},
  and so $\ca,\tau$ is very well-behaved.

  It remains to show that each very well-behaved weighted graph class $\ca,\tau$ is well-behaved.
  %
  If $\ca$ is closed under subdividing edges then by Lemma~\ref{lem.subdiv}
  $\ca, \tau$ maintains at least factorial growth, and we are done. 
  So suppose now that $\ca$ is decomposable.
  Then $\ca$ is addable, and so also the class $\ca^{\delta \geq 2}$ of graphs in $\ca$
  with minimum degree at least 2 is addable, if it is non-empty.
  Recall that $\cb$ denotes the class of connected graphs in $\ca^{\delta \geq 2}$.
  By Lemma~\ref{lem.addable-gc}, if $\ca^{\delta \geq 2}$ is non-empty then $\ca^{\delta \geq 2},\tau$ has a
  growth constant, and so by lemma~\ref{lem.tauconn} $\cb,\tau$ has a growth constant (the same one);
  and a fortiori $\cb,\tau$ maintains at least factorial growth, as noted in Section~\ref{subsec.maint}.
  The only remaining case is when $\ca^{\delta \geq 2}$ is empty, and so we are done.


\subsection{$\rho(\ca,\tau)$ as $\tau$ varies}
\label{subsec.tau-vary}


  Consider a proper minor-closed class $\ca$ of graphs, with $\rho(\ca,(1,1))$ finite.
  For example if $\ca$ contains the class $\cf$ of forests then $\rho(\ca,(1,1)) \leq \rho(\cf,(1,1)) = e^{-1} < \infty$.
  We saw that the radius of convergence $\rho(\ca,\tau)$ does not depend on the component parameter $\nu$.
  Let us write $\rho(\lambda)$ for $\rho(\ca,\tau)$.
  
  Let $c=c(\ca)$ be the maximum average degree of a graph in $\ca$, and recall that $c$ is finite
  (see the proof of Lemma~\ref{lem.addable-gc}).
  Note that $c \geq 2$: for if $c<2$ and $\cb$ is a class of all graphs with average degree at most $c$,
  then
\[ |\cb_n| = \sum_{0 \leq m \leq cn/2} {{n \choose 2} \choose m} \leq n\left( \frac{en}{c} \right)^{cn/2}
   = e^{(c/2) n \log n},\]
  so $(|\cb_n| / n!)^{1/n} = o(1)$, that is $\rho(\cb,\lambda)=0$.  (See~\cite{gmsw05} for related details.)

\begin{prop} \label{prop.mc-tauvary}
  The function $\rho(\ca,\lambda)$ is continuous and strictly decreasing on $(0,\infty)$.
  If $\; 0<\lambda \leq 1$ then
  \[\rho(\ca,1)\, \lambda^{-1} \leq \rho(\ca,\lambda) \leq \rho(\ca,{1}) \lambda^{-c/2}, \]
  and if $\: \lambda \geq 1$ then
  \[\rho(\ca,{1}) \lambda^{-c/2} \leq \rho(\ca,\lambda) \leq \rho(\ca,{1}) \lambda^{-1}.\]
\end{prop}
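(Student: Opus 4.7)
The \emph{outer} inequalities (those with exponent $c/2$) use only the crude bounds $0\leq e(G)\leq cn/2$ for every $G\in\ca_n$; the upper bound is Mader's theorem, which makes $c=c(\ca)$ finite. Writing $a_n(\lambda):=\sum_{G\in\ca_n}\lambda^{e(G)}$, for $\lambda\geq 1$ one has $|\ca_n|\leq a_n(\lambda)\leq|\ca_n|\lambda^{cn/2}$, so taking $n$-th roots of $a_n(\lambda)/n!$ gives $\gamma(\ca,1)\leq\gamma(\ca,\lambda)\leq\lambda^{c/2}\gamma(\ca,1)$, equivalently $\rho(\ca,1)\lambda^{-c/2}\leq\rho(\ca,\lambda)\leq\rho(\ca,1)$. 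The case $\lambda\leq 1$ is symmetric. Thus one side of each claim is free.

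For the \emph{inner} bound, the crux is the following concentration: for every fixed $\epsilon>0$, the graphs in $\ca_n$ with $e(G)<(1-\epsilon)n$ form a super-exponentially small fraction of $\ca_n$ along any subsequence $(n_k)$ realising $\gamma(\ca,1)=\limsup(|\ca_n|/n!)^{1/n}$. Indeed the number of labelled graphs on $[n]$ with fewer than $(1-\epsilon)n$ edges is at most $\binom{\binom{n}{2}}{\lceil(1-\epsilon)n\rceil}\cdot n=\exp\bigl((1-\epsilon)n\log n+O(n)\bigr)$, whereas along the subsequence $|\ca_{n_k}|\geq\exp\bigl(n_k\log n_k-O(n_k)\bigr)$ (using $\gamma(\ca,1)>0$ and $n!\geq(n/e)^n$). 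The ratio is $\exp(-\epsilon n_k\log n_k+O(n_k))\to 0$, and remains $o(\lambda^{(1-\epsilon)n_k})$ for any fixed $\lambda>0$. For $\lambda\geq 1$ this gives $a_{n_k}(\lambda)\geq\lambda^{(1-\epsilon)n_k}|\ca_{n_k}|(1-o(1))$, whence $\gamma(\ca,\lambda)\geq\lambda^{1-\epsilon}\gamma(\ca,1)$; for $\lambda\leq 1$, splitting $a_n(\lambda)\leq|\{G:e(G)<(1-\epsilon)n\}|+\lambda^{(1-\epsilon)n}|\ca_n|$ gives $\gamma(\ca,\lambda)\leq\lambda^{1-\epsilon}\gamma(\ca,1)$. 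Sending $\epsilon\to 0^+$ produces the inner bounds.

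Strict monotonicity and continuity follow by re-running the same concentration argument with an arbitrary base $\lambda_0>0$ in place of $1$: writing $a_n(\lambda)=\sum_G\lambda_0^{e(G)}(\lambda/\lambda_0)^{e(G)}$ and using the edge-count concentration yields, for $\lambda\geq\lambda_0$, the sandwich $\lambda/\lambda_0\leq\gamma(\ca,\lambda)/\gamma(\ca,\lambda_0)\leq(\lambda/\lambda_0)^{c/2}$. The lower bound is $>1$ when $\lambda>\lambda_0$, so $\gamma(\ca,\cdot)$ is strictly increasing; both sides tend to $1$ as $\lambda\to\lambda_0^+$, so $\gamma(\ca,\cdot)$ is right-continuous, and the left-sided version is identical. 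Hence $\rho(\ca,\cdot)=1/\gamma(\ca,\cdot)$ is continuous and strictly decreasing. The main obstacle I anticipate is that $\gamma(\ca,1)$ is in general only a $\limsup$ for a proper minor-closed class (the limit is not known to exist), so one must commit to working along a subsequence where the factorial-type lower bound on $|\ca_n|$ is attained before invoking the concentration estimate; once that is done the combinatorial bound on sparse labelled graphs and the super-exponential decay of the ratio combine cleanly.
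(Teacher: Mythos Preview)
Your proof is correct and shares the paper's core idea---that graphs with $e(G)<(1-\eps)n$ are negligible---but the paper packages this more cleanly in two places. First, rather than work with $\gamma=\limsup$ and chase a realising subsequence, the paper splits $\ca=\ca'\cup\ca''$ with $\ca'=\{G:e(G)\leq(1-\eps)v(G)\}$ and argues directly at the level of radii of convergence: since the generating functions have non-negative coefficients, $\rho(\ca,\lambda)=\min(\rho(\ca',\lambda),\rho(\ca'',\lambda))$, and your counting bound gives $\rho(\ca',\lambda)=\infty$, so $\rho(\ca'',\lambda)=\rho(\ca,\lambda)$; then $\tau(\ca''_n)\leq|\ca''_n|\lambda^{(1-\eps)n}$ yields the bound with no subsequence bookkeeping. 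Second, for the inner bound when $\lambda\geq 1$ the paper avoids the concentration argument altogether by passing to the class $\cc$ of connected graphs in $\ca$: since $\ca$ is minor-closed it contains components, so $\rho(\cc,\lambda)=\rho(\ca,\lambda)$, and every connected graph on $n$ vertices has $e(G)\geq n-1$, giving $\tau(\cc_n)\geq|\cc_n|\lambda^{n-1}$ immediately. Your uniform concentration approach works fine and has the virtue of treating both regimes symmetrically, but the paper's tricks make the ``obstacle'' you anticipated disappear.
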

   For the class $\cf$ of forests we have $c=2$, and
\[ \rho(\cf,\lambda) = \rho(\cf,{\bf 1}) \lambda^{-1} = (e \lambda)^{-1}\]
  as we already noted; and we see that the inequalites above for $\rho(\ca,\lambda)$ are tight.
  \smallskip
  
\begin{proof}
  To show $\rho(\ca,1)\, \lambda^{-1} \leq \rho(\ca,\lambda)$ for $0<\lambda \leq 1$.
  Let $0<\eps<1$.  Let $\ca' = \{ G \in \ca: e(G) \leq (1-\eps) v(G)\}$,
  and let $\ca'' = \ca \setminus \ca'$. Then
  $\rho(\ca',\lambda)= \infty$ as we saw above, and $\rho(\ca'',\lambda) = \rho(\ca,\lambda)$.
  Then for $\lambda \leq 1$
\[ \tau(\ca_n'') \leq | \ca_n''| \, {\lambda}^{(1-\eps)n} \leq | \ca_n| \, {\lambda}^{(1-\eps)n},\]
  and so  $\rho(\ca, \lambda) = \rho(\ca'', \lambda) \geq \rho(\ca,{\bf 1}) \, {\lambda}^{-(1-\eps)}$.
  But this holds for each $0<\eps<1$ so
  $\rho(\ca, \lambda) \geq \rho(\ca,{\bf 1}) \hat{\lambda}^{-1}$.
  (We needed no assumptions on $\ca$ here.)
  \smallskip
  
  To show $\rho(\ca,\lambda) \leq \rho(\ca,{1}) \lambda^{-1}$ for $\lambda \geq 1$.
  As we saw following~(\ref{eqn.exp}), $\rho(\cc,\lambda)= \rho(\ca,\lambda)$, where $\cc$ is the class of
  connected graphs in $\ca$.  Thus for $\lambda \geq 1$, $\tau(\cc_n) \geq |\cc_n| {\lambda}^{n-1}$, so
  $\rho(\cc,\lambda) \leq \rho(\cc,{\bf 1})/{\lambda}$ and hence
  $\rho(\ca,\lambda) \leq \rho(\ca,{\bf 1})/{\lambda}$.
  
  For the remaining inequalities, observe that $\tau(\ca_n) = \sum_{G \in \ca_n} \lambda^{e(G)}$.
  Thus if ${\lambda} \leq 1$, $\tau(\ca_n) \geq |\ca_n| {\lambda}^{cn/2}$, and so
  $\rho(\ca,\lambda) \leq \rho(\ca,{\bf 1}) {\lambda}^{-c/2}$;
  and if ${\lambda} \geq 1$, $\tau(\ca_n) \leq |\ca_n| {\lambda}^{cn/2}$, and so
  $\rho(\ca,\lambda) \geq \rho(\ca,{\bf 1}) {\lambda}^{-c/2}$.  
  \smallskip

  To show that $\rho(\ca,\lambda)$ is strictly decreasing on $(0,\infty)$, consider $\ca''$ from the first part of the proof,
  with say $\eps=\frac12$.  Let $\eta>0$.  Then
\[ \sum_{G \in \ca_n''} ((1+\eta) \lambda)^{e(G)} \geq (1+\eta)^{ n/2} \cdot \sum_{G \in \ca_n''} \lambda^{e(G)}\]
  and so
\[ \rho(\ca,(1+\eta)\lambda) = \rho(\ca'',(1+\eta)\lambda) \leq (1+\eta)^{-\frac12} \rho(\ca,\lambda).\] 
  Now to show that $\rho(\ca,\lambda)$ is continuous on $(0,\infty)$, observe that
\[ \sum_{G \in \ca_n} ((1+\eta) \lambda)^{e(G)} \leq (1+\eta)^{cn/2} \cdot \sum_{G \in \ca_n} \lambda^{e(G)} \]
  and so
\[ \rho(\ca,(1+\eta)\lambda) \geq (1+\eta)^{-c/2} \rho(\ca,\lambda).\]
\end{proof}


  \section{Distribution of the Boltzmann random graph}
  \label{sec.proofU}

  Let us first check equation~(\ref{eqn.Agen}).
  Recall that the class $\ca$ of graphs is closed under isomorphism.
  We identify an unlabelled graph on $n$
  vertices with an equivalence class under graph isomorphism of graphs on vertex set $[n]$.
  Since each graph $H \in \cu \ca_n$ consists of $\frac{n!}{\aut(H)}$
  graphs $G \in \ca_n$, we have
  \[
  \frac{x^{v(H)} y^{e(H)} z^{\kappa(H)}}{\aut(H)} =
  \sum_{G \in H} \frac{\aut(H)}{v(H)!}  \frac{x^{v(H)} y^{e(H)} z^{\kappa(H)} }{\aut(H)}
  =  \sum_{G \in H} \frac{x^{v(G)} y^{e(G)} z^{\kappa(G)} }{v(G)!}.
  \]
  Thus
  \[
  A(x,y,z)
  = \sum_{H \in \cu \ca} \sum_{G \in H} \frac{x^{v(G)} y^{e(G)} z^{\kappa(G)} }{v(G)!}
  = \sum_{H \in \cu \ca} \frac{x^{v(H)}y^{e(H)}
  z^{\kappa(H)}}{\aut(H)},
  \]
  proving~(\ref{eqn.Agen}).
  \bigskip

  \begin{proofof} {Theorem~\ref{thm.U}}
  Each sum and product below is over all $H$ in $\cu\cc$.
  Let the unlabelled graph $G$ consist of $n_H$ components isomorphic to $H$ for each
  $H \in \cu \cc$, where $0 \leq \sum_{H} n_H < \infty$.
  Then
\[ \rho^{v(G)} = \prod_{H} \rho^{v(H) n_H}, \;\; \lambda^{e(G)} = \prod_{H} \lambda^{e(H) n_H},
  \;\; \nu^{\kappa(G)} = \prod_{H} \nu^{n_H} \]
  and
\[  \aut(G) = \prod_{H} \aut(H)^{n_H} {n_H}! .\]
  Hence
\[  \frac{ \rho^{v(G)} \lambda^{e(G)} \nu^{ \kappa(G)} }{\aut(G)} =   
  \prod_{H} \frac{\mu(H)^{n_H}}{n_H!}.\]
  
  Also since $\sum_{H} \mu(H) = C(\rho, \tau)$ by~(\ref{eqn.Agen}) applied to $\cc$,
  \[ \frac{1}{A(\rho,\tau)} =
    e^{-C(\rho,\tau)} = \prod_{H} e^{-\mu(H)}.
  \]
  Hence
  \begin{eqnarray*}
  \pr[R =G]
  &=& e^{-C(\rho,\tau)} \frac{ \rho^{v(G)} \lambda^{e(G)} \nu^{ \kappa(G)} }{\aut(G)}\\
  &=& \prod_{H} e^{-\mu(H)} \frac{\mu(H)^{n_H}}{n_H!}\\
  &=& \prod_{H} \pr[\Po(\mu(H))=n_H].
  \end{eqnarray*}
  Thus the probability factors appropriately, and
  the random variables $\kappa(R,H)$ for $H \in \cu\cc$ satisfy
  \[
  \pr[\kappa(R,H)=n_H \;\; \forall H \in \cu\cc] =
  \prod_H \pr[\kappa(R,H)=n_H].
  \]
  This holds for every choice of non-negative integers $n_H$ with
  $\sum_{H \in \cu\cc} n_H < \infty$, and
  thus also without this last restriction
  (since both sides are zero if the sum is infinite).
  This completes the proof of the theorem.
  \end{proofof}


\section{Smoothness and 2-core: proof of Theorem~\ref{thm.sumup}}
\label{sec.proofs}

  In this section we deduce Theorem~\ref{thm.sumup}, after two
  preliminary subsections:
  one on deducing smoothness for a class $\ca$ from smoothness for the
  class of connected graphs in $\ca$, and one on smoothness for
  classes of connected graphs.


\subsection{Smoothness: from connected to general}
\label{subsec.contogen}


\begin{lemma} \label{lem.smoothconv}
  Let the class $\ca$ of graphs be bridge-addable,
  and let $R_n \in_{\tau} \ca$.
  Let $\cf$ denote the class $\cf_{\ca}$ of graphs freely addable to~$\ca$,
  and suppose that \whp  $\Frag(R_n) \in \cf$. 
  Let $\cc$ be the class of connected graphs in $\ca$,
  let $\rho = \rho({\cal C},\tau)$ satisfy $0<\rho<\infty$, and
  suppose further that $\cc,\tau$ is smooth.
  Then $F(\rho,\tau)$ is finite, where $F$ is the exponential generating function for
  $\cf$;  and the weighted class $\ca,\tau$ is smooth.
\end{lemma}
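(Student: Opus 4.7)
The plan is to approximate $\tau(\ca_n)$ by the ``decomposition sum''
\[ S_n \;:=\; \sum_{k:\, k < n/2} \binom{n}{k}\, \tau(\cc_{n-k})\, \tau(\cf_k), \]
arising from (big component, fragment) decompositions, and then extract smoothness of $\ca,\tau$ from the smoothness of $\cc,\tau$. The first thing I would note is the lower bound $\tau(\ca_n) \geq S_n$: by free addability of $\cf$ to $\ca$, for each $k < n/2$, each $S \subseteq [n]$ with $|S|=k$, each $B \in \cc$ on $[n]\setminus S$ and each $F \in \cf$ on $S$, the disjoint union $B \cup F$ lies in $\ca_n$; since $|B| = n-k > k$, $B$ is its big component, so distinct triples $(S,B,F)$ give distinct graphs. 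Combining this with the connectivity estimate $\tau(\ca_n) \leq e^{\nu/\lambda}\,\tau(\cc_n)$ from Lemma~\ref{lem.tauconn}(a), and using the pointwise limit $\binom{n}{k}\tau(\cc_{n-k})/\tau(\cc_n) \to \rho^k/k!$ from smoothness of $\cc$, I would let $n \to \infty$ and then $N \to \infty$ in
\[ \sum_{k=0}^{N} \binom{n}{k}\,\tau(\cc_{n-k})\,\tau(\cf_k)\big/\tau(\cc_n) \;\leq\; e^{\nu/\lambda} \]
to conclude $F(\rho,\tau) \leq e^{\nu/\lambda} < \infty$.

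Next I would argue $\tau(\ca_n) \sim S_n$. Every $G \in \ca_n$ with $\Frag(G) \in \cf$, $\frag(G) < n/2$ and $\Bigc(G) \in \cc$ decomposes uniquely as $\Bigc(G) \cup \Frag(G)$ and contributes to the term of $S_n$ with $k = \frag(G)$. The remaining $\tau$-mass is $o(\tau(\ca_n))$: by hypothesis $\pr(\Frag(R_n) \notin \cf) \to 0$, and by Lemma~\ref{lem.tauconn}(b) combined with Markov, $\pr(\frag(R_n) \geq n/2) = O(1/n)$.

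For smoothness, write $\gamma := 1/\rho$ and $\theta_m := \tau(\cc_m)/(m\gamma\,\tau(\cc_{m-1}))$, so that smoothness of $\cc$ reads $\theta_m \to 1$ as $m \to \infty$. Using $\binom{n}{k}/\binom{n-1}{k} = n/(n-k)$, a short computation gives
\[ \binom{n}{k}\,\tau(\cc_{n-k}) \;=\; n\gamma\, \theta_{n-k}\, \binom{n-1}{k}\,\tau(\cc_{n-1-k}), \]
so $S_n/(n\gamma S_{n-1})$ is a weighted average of the values $\theta_{n-k}$ over a probability distribution on $\{k : k < n/2\}$. Since every summand has $n - k > n/2 \to \infty$, this weighted average lies between $\inf_{m > n/2}\theta_m$ and $\sup_{m > n/2}\theta_m$, both tending to $1$. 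Hence $S_n/(n\gamma S_{n-1}) \to 1$, and combining with $\tau(\ca_n) \sim S_n$ yields $\tau(\ca_n)/(n\,\tau(\ca_{n-1})) \to \gamma$, which is smoothness of $\ca,\tau$.

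The main obstacle is the point used tacitly above that $\Bigc(G) \in \cc$ whenever $G \in \ca_n$ and $\Frag(G) \in \cf$. This is automatic when $\ca$ is also minor-closed (each component is then a minor, hence in $\ca$), which covers the intended applications of the lemma. In the purely bridge-addable setting stated one would need to verify separately that $\ca$ is closed under deleting a freely-addable disjoint union of components, and this is the one delicate point in turning the above plan into a complete proof.
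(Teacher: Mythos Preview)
Your argument is correct and close in spirit to the paper's, but the execution differs. Both rest on the (big component, fragment) decomposition together with Lemma~\ref{lem.tauconn}; the difference is in how smoothness of $\ca,\tau$ is extracted. The paper truncates at a \emph{fixed} fragment size $k$, sets $\tilde a_n=\sum_{j=0}^{k}\binom{n}{j}\tau(\cf_j)\tau(\cc_{n-j})$, and proves the stronger statement $\tau(\ca_n)\sim F(\rho,\tau)\,\tau(\cc_n)$ by sandwiching: smoothness of $\cc$ makes $\tilde a_n\approx\tau(\cc_n)\sum_{j\le k}\tau(\cf_j)\rho^j/j!$, and the tail is controlled by $\pr[\frag(R_n)>k]$ and $\pr[\Frag(R_n)\notin\cf]$. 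Your route keeps the full sum $S_n$ (to $n/2$) and uses the neat weighted-average identity for $S_n/(n\gamma S_{n-1})$ in terms of the ratios $\theta_{n-k}$; this is a clean way to get smoothness directly, though note the summation ranges for $S_n$ and $S_{n-1}$ differ by one boundary term when $n$ is odd, which you should check is $o(S_{n-1})$ (it is, once $F(\rho,\tau)<\infty$ is established). Your finiteness argument for $F(\rho,\tau)$ via partial sums and the bound $\tau(\ca_n)\le e^{\nu/\lambda}\tau(\cc_n)$ is a little more direct than the paper's contradiction argument but uses the same ingredients.

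On the point you flag about $\Bigc(G)\in\cc$: the paper's proof makes the same tacit assumption, since it asserts the equality $\tau(\ca_n)=\tilde a_n+\tau(\{G:\frag(G)>k\text{ or }\Frag(G)\notin\cf\})$, which requires that every $G\in\ca_n$ with $\frag(G)\le k$ and $\Frag(G)\in\cf$ has $\Bigc(G)\in\ca$. So your caveat applies equally to the paper; in all applications the class $\ca$ is minor-closed (this is part of being well-behaved), whence every component of a graph in $\ca$ lies in $\ca$, and the issue evaporates.
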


\begin{proofof}{Lemma \ref{lem.smoothconv}}
  Recall that $F(x,\tau) = \sum_{j \geq 0} \tau(\cf_j) x^j /j!$.
  We shall show that $F(\rho,\tau)$ is finite and
  $\tau(\ca_n) \sim F(\rho,\tau) \tau(\cc_n)$,
  from which it will follow immediately that $\ca,\tau$ is smooth.
  Let $0<\eta<1$.  Then we are to show that $F(\rho,\tau)$ is finite,
  and that for $n$ sufficiently large
  \begin{equation} \label{eqn.ctoa}
  (1-\eta) F(\rho,\tau) \tau(\cc_n) \leq \tau(\ca_n) \leq (1+\eta) F(\rho,\tau) \tau(\cc_n).
  \end{equation}
  Let $\eps>0$ be sufficiently small that
  $\eps \leq 1/(e^{\nu/\lambda}+2)$, and
  $(1-2\eps)^{-1}(1+\eps) \leq 1+\eta$ and $(1-\eps)^2 \geq 1 - \eta$.
  By our assumptions and Lemma~\ref{lem.tauconn} (b),
  we may fix positive integers $k$ and $n_0$ sufficiently large that
  $\sum_{j=0}^{k} \tau(\cf_j) \rho^j/j!$ is at least
  $(1-\eps) F(\rho,\tau)$ if $F(\rho,\tau)$ is finite,
  and is at least $e^{\nu/\lambda}+2$ otherwise; and
  $\pr[\frag(R_n)>k] < \eps$ and
  $\pr[\Frag(R_n) \not\in \cf]< \eps$ for all $n \geq n_0$.

  Since $\cc,\tau$ is smooth, there exists $n_1 \geq n_0$ sufficiently large that
  for all $n \geq n_1$, the ratio
  $\tilde{r}_n = n \tau(\cc_{n-1})/\tau(\cc_{n})$ satisfies
  \[
  (1-\eps)^{1/k} \rho < \tilde{r}_n < (1+\eps)^{1/k} \rho.
  \]
  Then for each $n \geq n_1 +k$ and each $j=1,\ldots,k$,
  since
\[
  \frac{(n)_j \tau(\cc_{n-j})} {\tau(\cc_{n})} = \prod_{i=1}^j \tilde{r}_{n-i+1}
\]
  we have
\[
  (1-\eps) \rho^j <  \frac{(n)_j \tau(\cc_{n-j})}{\tau(\cc_{n})} < (1+\eps) \rho^j.
\]
  Denote  $\sum_{j=0}^{k} {n \choose j} \tau(\cf_j) \tau(\cc_{n-j})$  by $\tilde{a}_n$.
  Observe that $\tilde{a}_n \leq \tau(\ca_n)$: we shall see that $\tilde{a}_n$ is an approximation
  to $\tau(\ca_n)$.  Note that
\[
  \tilde{a}_n = \tau(\cc_{n}) \sum_{j=0}^{k}
  \frac{\tau(\cf_j)}{j!}
  \frac{(n)_j \tau(\cc_{n-j})}{\tau(\cc_{n})};
\]
  and so for each $n \geq n_1 +k$ we have
\[
  (1-\eps) \tau(\cc_n) \sum_{j=0}^{k}\frac{\tau(\cf_j) \rho^j}{j!}
  \leq \tilde{a}_n \leq
  (1+\eps) \tau(\cc_n) \sum_{j=0}^{k}\frac{\tau(\cf_j) \rho^j}{j!}.
\]

  We may now see that $F(\rho,\tau)$ is finite.
  For suppose not.  Then for each $n \geq n_1 +k$
\[
  \tau(\ca_n) \geq \tilde{a}_n
  \geq (1-\eps)(e^{\nu/\lambda}+2) \tau(\cc_n)
  \geq (e^{\nu/\lambda}+1) \tau(\cc_n).
\]
  But since $\ca$ is bridge-addable, by Lemma~\ref{lem.tauconn}
  the probability that $R_n$ is connected is at least $e^{-\nu/\lambda}$,
  and we obtain the contradiction that
\[
  e^{\nu/\lambda} \cdot \tau(\cc_n) \geq \tau(\ca_n)
  \geq (e^{\nu/\lambda}+1) \cdot \tau(\cc_n).
\]
  Hence $F(\rho,\tau)$ must be finite.

  From the above we have
  $\tilde{a}_n \leq (1+\eps) \tau(\cc_n) F(\rho,\tau)$,
  and
  $\tilde{a}_n \geq (1-\eps)^2 \tau(\cc_n) F(\rho,\tau)$.
  But
  \[
  \tau(\ca_n) = \tilde{a}_n + \sum_{G} \{\tau(G) : G \in \ca_n, \frag(G)>k \mbox{ or } \Frag(G) \not\in \cf \}.
  \]
  Thus $\tau(\ca_n) \leq \tilde{a}_n + 2 \eps \, \tau(\ca_n)$, and so
\[
    \tau(\ca_n) \leq (1- 2\eps)^{-1} \tilde{a}_n \leq (1+ \eta) \tau(\cc_n) F(\rho,\tau);
\]
  and
\[
  \tau(\ca_n) \geq \tilde{a}_n \geq (1-\eps)^2 \tau(\cc_n) F(\rho,\tau) \geq
  (1- \eta) \tau(\cc_n) F(\rho,\tau).
\]
  So~(\ref{eqn.ctoa}) holds and we are done.
\end{proofof}

  We need Lemma~\ref{lem.smoothconv} for graph classes which may not be decomposable
  (such as $\cg^S$) but let us note here an elegant general result for a decomposable class $\ca$
  and the corresponding class $\cc$ of connected graphs: 
  by Corollary 4.3 of Bell and Burris~\cite{bb03}, if $\cc, \tau$ is smooth then so is $\ca, \tau$.




\subsection{Connected graphs and smoothness}
\label{subsec.conn-smooth}

  Let us call a class $\ca$ of graphs {\em trimmable} if it satisfies
  $\; G \in \ca \Leftrightarrow \core(G) \in \ca$.  (To tell if a
  graph is in such a class, it does not matter if we repeatedly trim off leaves.)
  Recall that by convention the empty graph is in $\ca$, so if $\ca$ is trimmable then $\ca$ contains every forest.
  Observe also that a minor-closed class of graphs is trimmable if and only if
  each excluded minor $H$ has minimum degree $\delta(H) \geq 2$.
  Also, if $\ca$ is bridge-addable and monotone (that is, closed under forming subgraphs)
  then the class $\cc$ of connected graphs in $\ca$ is trimmable.

  Suppose that a non-empty class $\cc$ of connected 
  graphs is trimmable. Then from what we saw about trees we have
  $\liminf_n \left( \tau(\cc_n)/n! \right)^{1/n} \geq \lambda e$.  
  Let $\cb=\cc^{\delta \geq 2}$.  Then $C(x,\tau)=B(\nu^{-1} T^o(x,\tau),\tau)$,
  where $C$, $B$ and $T^o$ are the exponential generating functions for $\cc$, $\cb$ and the rooted trees, respectively.
  Now if $0<x< (\lambda e)^{-1}$ then $\nu^{-1} T^o(x,\tau) < \lambda^{-1}$
  by the last result in Section~\ref{subsec.connresults}.
  Hence if $\rho(\cb,\tau) \geq \lambda^{-1}$ then $\rho(\cc, \tau) \geq (\lambda e)^{-1}$,
  and so $\cc, \tau$ has growth constant $\lambda e$.
  We extend this observation below.
  

%

 \begin{lemma} \label{lem.ctrim1}
   Let the non-empty class $\cc$ of connected graphs be trimmable, and
   let $\cb = \cc^{\delta \geq 2}$. 
   Suppose that either (a) $\cb, \tau$ has growth constant $\beta$ and $\beta > \lambda$, 
   in which case we let $\gamma = \beta e^{\lambda/\beta}$ (which is $>\lambda e$)
   and let $\alpha=1- \lambda/\beta$;
   or (b)  $\cb, \tau$ has radius of convergence $\geq \lambda^{-1}$
   in which case we let $\gamma = \lambda e$ and let $\alpha =0$.
   
   Then $\cc, \tau$ is smooth, with growth constant~$\gamma$.
   Further, 
   let $R^{\cc}_n \in_{\tau} \cc$: then for any $\eps>0$
   \[
   \pr[|v(\core(R^{\cc}_n))- \alpha n| > \eps n] = e^{-\Omega(n)}.
   \]
\end{lemma}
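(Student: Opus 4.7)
The engine is the 2-core decomposition of $\cc$. Every $G \in \cc_n$ is uniquely determined by its 2-core $K = \core(G) \in \cb \cup \{\emptyset\}$, embedded in some $k$-subset of $[n]$, together with a rooted forest on $[n]$ whose root set is precisely those core vertices. Since the number of rooted forests on $[n]$ with a specified $k$-element set of roots is $k\, n^{n-k-1}$ (for $k \geq 1$, by the standard Pr\"ufer-sequence argument), and the rooted forest contributes exactly $n-k$ edges each of weight $\lambda$, the decomposition gives the identity
\begin{equation} \label{eqn.plan-dcmp}
\tau(\cc_n) \;=\; \tau(\ct_n) \;+\; \sum_{k \geq 3} \binom{n}{k}\, k\, n^{n-k-1}\, \lambda^{n-k}\, \tau(\cb_k),
\end{equation}
for $n \geq 1$. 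This is the term-wise form of the generating-function identity $C(x, \tau) = B(\nu^{-1} T^o(x,\tau), \tau)$ noted in Section~\ref{subsec.conn-smooth}.

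Let $\phi_k$ denote the $k$-th summand in~\eqref{eqn.plan-dcmp}. In case (a), I will use $\tau(\cb_k) = (\beta + o(1))^k k!$. Stirling converts $\phi_k/n!$ into $\exp(n\, f(k/n) + o(n))$ with $f(x) = (1-x)\log\lambda + x \log \beta + (1-x)(1 - \log(1-x))$; an elementary check gives $f'(x) = \log((1-x)\beta/\lambda)$, which vanishes at $x = \alpha = 1 - \lambda/\beta$, and $f(\alpha) = \log \beta + \lambda/\beta = \log \gamma$, with $f''<0$ so $f$ is strictly concave. Summing over $k$ yields $\tau(\cc_n) = (\gamma + o(1))^n n!$; and since $f(x) \leq \log \gamma - \delta$ uniformly for $|x - \alpha| > \eps$, the tail $\sum_{|k - \alpha n| > \eps n} \phi_k$ is $e^{-\Omega(n)}$ times the full sum, giving $\pr[|v(\core(R^{\cc}_n)) - \alpha n| > \eps n] = e^{-\Omega(n)}$. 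In case (b), the hypothesis $\rho(\cb,\tau) \geq \lambda^{-1}$ gives $\tau(\cb_k) \leq ((1+\eps)\lambda)^k k!$ eventually, so $\phi_k/\tau(\ct_n)$ forms a summable sequence in $k$ uniformly in $n$ and $\tau(\cc_n) = O(\tau(\ct_n)) = O((\lambda e)^n n!)$; combined with the trivial lower bound $\tau(\cc_n) \geq \tau(\ct_n)$, this forces $\gamma = \lambda e$ and also $\pr[v(\core(R^{\cc}_n)) > \eps n] = e^{-\Omega(n)}$.

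The main obstacle is smoothness, because we only assume $\cb, \tau$ has a growth constant and \emph{not} that it is smooth, so the term-by-term ratios $\tau(\cb_k)/\tau(\cb_{k-1})$ are uncontrolled. I will sidestep this via a probabilistic identity. Double-counting pairs $(G, v)$ with $v$ a leaf of $G \in \cc_n$ (delete $v$ to obtain a graph in $\cc_{n-1}$; inversely attach a new leaf in any of $n(n-1)$ label-and-parent ways, gaining a factor $\lambda$ for the new edge) yields
\[
\E\bigl[\ell(R^{\cc}_n)\bigr] \;=\; \frac{n(n-1)\, \lambda\, \tau(\cc_{n-1})}{\tau(\cc_n)},
\]
so smoothness $\tau(\cc_n)/(n\, \tau(\cc_{n-1})) \to \gamma$ is equivalent to $\E[\ell(R^{\cc}_n)]/n \to \lambda/\gamma$. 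Conditional on the core $K$, the attached forest is uniform over rooted forests on $[n]$ with the $k$ core vertices as roots (every such forest carries the same weight $\lambda^{n-k}$); a direct count then shows that each specific non-root vertex is a leaf with probability exactly $((n-1)/n)^{n-k-1}$. Taking expectations and using the concentration $k \approx \alpha n$ gives $\E[\ell(R^{\cc}_n)]/n \to (1 - \alpha) e^{-(1-\alpha)}$; a simple algebraic check shows $(1-\alpha) e^{-(1-\alpha)} = (\lambda/\beta) e^{-\lambda/\beta} = \lambda/\gamma$ in case (a) and $= 1/e = \lambda/(\lambda e) = \lambda/\gamma$ in case (b). This establishes smoothness uniformly, overcoming the absence of any a priori smoothness for~$\cb$.
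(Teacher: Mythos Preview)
Your decomposition and the Stirling/Laplace analysis for the growth constant and core-size concentration match the paper's approach essentially line for line. There is one slip in case~(b): the claim that $\phi_k/\tau(\ct_n)$ is summable in $k$ uniformly in $n$, and hence $\tau(\cc_n) = O(\tau(\ct_n))$, is false. For fixed $k$ one has $\phi_k/\tau(\ct_n) \sim n\,\tau(\cb_k)/((k-1)!\,\lambda^{k-1}\nu)$, which grows with $n$; and if $\cb$ is the class of cycles (so $\rho(\cb,\tau)=\lambda^{-1}$ exactly) then $\cc$ consists of the trees together with the connected unicyclic graphs, and $\tau(\cc_n)/\tau(\ct_n)\to\infty$. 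The repair is routine: from $\tau(\cb_k) \leq ((1+\eps')\lambda)^k k!$ you get only $\tau(\cc_n) \leq (1+\delta)^n\tau(\ct_n)$ for every $\delta>0$ (taking $\eps'$ small), which still forces growth constant $\lambda e$; and for the tail $k>\eps n$ the factor $(n)_k/n^k \leq e^{-k(k-1)/(2n)}$ beats $(1+\eps')^k$ once $\eps'$ is small relative to $\eps$, giving the exponential decay. (The paper handles case~(b) by a different device: it enlarges $\cc$ to a trimmable $\cc'$ whose associated $\cb'$ has growth constant exactly $\lambda$, and then reads off the upper bounds from the $\beta=\lambda$ subcase of the case-(a) analysis.)

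Your smoothness argument, by contrast, is genuinely different from the paper's and works cleanly. The paper observes that in the ratio $f(n+1,k)/((n+1)f(n,k))$ the factor $\tau(\cb_k)$ \emph{cancels} (same $k$ in numerator and denominator), leaving the explicit expression $\lambda(1+1/n)^{(1-\kappa)n}/(1+1/n-\kappa)$ with $\kappa=k/n$, which tends to $\gamma$ uniformly for $\kappa$ near $\alpha$; restricting the sums for $\tau(\cc_{n+1})$ and $\tau(\cc_n)$ to the dominant range of $k$ then gives smoothness. You instead convert smoothness into the statement $\E[\ell(R^{\cc}_n)]/n \to \lambda/\gamma$ via the leaf double-count, and compute that limit from the core-size concentration together with the exact conditional leaf probability $((n-1)/n)^{n-k-1}$. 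Both routes exploit the same structural fact---the tree part hanging off the core is a uniform rooted forest, independent of which core was chosen---but package it differently. The paper's route is a little shorter; yours makes it especially transparent why no control on consecutive ratios $\tau(\cb_k)/\tau(\cb_{k-1})$ is ever needed.
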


\begin{proof}
  If $\cb$ is empty then $\cc$ is the class $\ct$ of all trees,
  and we saw in Section~\ref{sec.comps} above
  that $\ct, \tau$ is smooth with growth constant $\lambda e$
  (and $\beta=0, \gamma= \lambda e$ and $\alpha =0$).
  Thus we may assume that $\cb$ is non-empty.
  For $3 \leq k \leq n$ let
  \[ f(n,k)=  \nu \sum \left\{ \lambda^{e(G)}: G \in \cc_n, v(\core(G))=k \right\}.\]
  Observe that $|\cc_1|=1$ if the one-vertex graph $K_1$ is in $\cc$,
  and $|\cc_1|=0$ otherwise.
  %
  The main idea of the proof was inspired by~\cite{bcr08},
  and goes as follows for case (a), when $\cb,\tau$ has growth constant $\gamma>\lambda e$. 
  (We consider the case (b) later.)
  We first show that
  \[ f(n,k) = (1+o(1))^n \ n! \ \gamma^n \]
  when $k = (\alpha +o(1)) n$,
  and the expression on the right side gives an asymptotic approximation for $\tau(\cc_n)$.
  Further, the dominant contribution in the sum
  \begin{equation} \label{eqn.csum}
    \tau(\cc_n) = \sum_{k=3}^{n} f(n,k) + |\cc_1| \ n^{n-2} \lambda^{n-1} \nu
  \end{equation}
  is from $k$ as above; for all such $k$
  \[ \frac{f(n+1,k)}{(n+1) f(n,k)} \sim \gamma ; \]
  and this yields
  \[ \frac{\tau(\cc_{n+1})}{(n+1) \tau(\cc_n)} \sim \gamma.\]

  Now for the details.
  Let us not yet assume that $\cb$ has a growth constant (so that we can re-use the argument later).
  Recall that the number of forests on $[n]$ consisting of $k$ trees
  where vertices $1,\ldots,k$ are all in different trees is $kn^{n-1-k}$,
  see for example Theorem 3.3 of~\cite{moon70}.
  Thus
  \begin{equation} \label{eqn.fnk}
  f(n,k) = {n \choose k} \ \tau(\cb_k) \ \lambda k(\lambda n)^{n-1-k}.
  \end{equation}
  Of course $f(n,n)= \tau(\cb_n)$.
  We aim next to prove the three results
  (\ref{eqn.beta1}), (\ref{eqn.beta2}) and~(\ref{eqn.beta3}) below.
  We first consider case (a),
  and then case (b) will follow easily.

  Suppose then that $\beta>0$. 
  Let $r(n) = \frac{\tau(\cb_n)}{n! \beta^n}$
  (so that we will have $r(n) = (1+o(1))^n$ below once we assume that $\cb, \tau$ has growth constant $\beta$).
  %
  Let $s(n) = \frac{n^n}{n! e^n}$, so that by Stirling's formula $s(n) \sim (2 \pi
  n)^{-\frac12}$, and $s(n) \leq 1$ for all $n$.
  Then for $3 \leq k \leq n-1$, writing $\kappa = k/n$,
\begin{eqnarray*}
  f(n,k)  & = &
  n! \ \frac{\tau(\cb_k) }{k!} \ \frac{\lambda k}{\lambda n} \ \left(\frac{\lambda n}{n-k}\right)^{n-k}
   \ \frac{(n-k)^{n-k}}{(n-k)!}\\
  & = &
  n! \ r(k) \ \beta^k \ \frac{k}{n} \left(\frac{\lambda e}{1-k/n}\right)^{n-k} s(n-k)\\
  & = &
  n! \ \beta^n \
  \left(\frac{\lambda e}{\beta(1-\kappa)}\right)^{(1-\kappa)n} \cdot
  \kappa \, r(k) \, s(n-k)\\
  & = &
  n! \ \beta^{n} \  \left(h(1-\kappa)\right)^{n} \cdot \kappa \, r(k) \, s(n-k)
\end{eqnarray*}
  where $h(x) = (\frac{\lambda e}{\beta x})^{x}$ for $x>0$ and
  $h(0) =1$.
  Thus (without yet assuming that $\cb, \tau$ has a growth constant) we have
\begin{equation} \label{eqn.pregc}
  f(n,k) = n! \ \beta^{n}  \left(h(1-\kappa)\right)^{n} \cdot
  \kappa \, r(k)\, s(n-k).
\end{equation}
  Note that the function $h(x)$ strictly increases up to $x=\lambda/\beta$,
  where it has value $e^{\lambda/\beta}$, and strictly decreases above $\lambda/\beta$.
  Hence
\[f(n,k) \leq \ n! \ \beta^{n}  \left(e^{\lambda/\beta}\right)^{n} r(k)\]
  and recalling that $\gamma= \beta e^{\lambda/\beta}$ we have
\begin{equation} \label{eqn.pregcbound}
  f(n,k) \leq n! \, \gamma^n r(k).
\end{equation}
  (We will use this inequality in the proof of lemma~\ref{lem.ctrim2}.)

  Now assume that $\cb, \tau$ has growth constant $\beta$, so that
  $g(n) = (1+o(1))^n$ by the definition of the growth constant.
  %
  Then from~(\ref{eqn.pregc}),
  uniformly over $k$ with $3 \leq k \leq n$, still writing
  $\kappa = k/n$, we have
\begin{equation} \label{eqn.pregcapprox}
    f(n,k) = (1+o(1))^n \ n! \ \beta^{n}  \left(h(1-\kappa)\right)^{n}.
\end{equation}
  %
  We need to consider two subcases.

  (i) Suppose first that $\beta>\lambda$, so
  $\gamma = \beta e^{\lambda/\beta}$ and $\alpha=1-\lambda/\beta$.
  Then it follows from~(\ref{eqn.csum}), (\ref{eqn.pregcapprox}) and the properties of $h$ that
  \begin{equation} \label{eqn.beta1}
    \tau(\cc_n) = (1+o(1))^n \ n! \ \gamma^n
  \end{equation}
  (the possible term $n^{n-2}\lambda^{n-1} \nu$ in the sum~(\ref{eqn.csum}) is negligible
  since $\gamma > \lambda e$),
  and for any $\delta>0$ there exists $\eta>0$ such that
  \begin{equation} \label{eqn.beta2}
    \sum_{k:|k-\alpha n| \geq \delta n} f(n,k)
    \leq (1-\eta +o(1))^n \ n! \ \gamma^n.
  \end{equation}
  Thus for any $\delta>0$
\begin{equation} \label{eqn.beta3}
  \frac{\tau(\cc_{n+1})}{(n+1) \tau(\cc_{n})} \sim
  \frac{\sum_{k:|k- \alpha n| < \delta n} f(n+1,k) }{\sum_{k:|k- \alpha n| < \delta n} (n+1) f(n,k)}.
\end{equation}

  (ii) 
  If $\beta = \lambda$ then $\gamma= \lambda e$ and $\alpha=0$,
  and the results~(\ref{eqn.beta1}), (\ref{eqn.beta2}) and~(\ref{eqn.beta3})
  follow as above.
  \smallskip


   Finally consider the case $\rho(\cb,\tau) \geq \lambda^{-1}$.
   We shall see that we have exactly the same results~(\ref{eqn.beta1}), (\ref{eqn.beta2})
   and~(\ref{eqn.beta3}) as for the case (ii). 
   We know that $\tau(\cc_n) \geq (1+o(1))^n n! (\lambda e)^n$.
   Also, we may add connected graphs to $\cc$, maintaining trimmability,
   to form $\cc'$ so that if $\cb'$ denotes $\{G \in \cc':\delta(G) \geq 2\}$ then
   $\cb', \tau$ has growth constant $\lambda$. 
   Thus from (\ref{eqn.beta1}) and~(\ref{eqn.beta2}) for $\cc'$ and $\cb'$
   we obtain the corresponding results for $\cc$ and $\cb$ in this case,
   and then we may deduce~(\ref{eqn.beta3}).  We have now established
  (\ref{eqn.beta1}), (\ref{eqn.beta2}) and~(\ref{eqn.beta3}) for both cases (a) and (b).

  By equation~(\ref{eqn.fnk}), for each $k$ such that $3 \leq k \leq n$ and
  $\cb_k \neq \emptyset$, writing $k = \kappa n$ we have
  \[
    \frac{f(n+1,k)}{(n+1)f(n,k)} = \frac{\lambda n}{n+1-k} \ (1+\frac1{n})^{n-k}
    = \frac{\lambda(1+\frac1{n})^{(1-\kappa)n}}{1+ \frac1{n} - \kappa}.
  \]
  %
  Now $\lambda \frac{e^{1-\alpha}}{1-\alpha} = \beta e^{\lambda/\beta}= \gamma$ if $\beta>\lambda$,
  and $\lambda \frac{e^{1-\alpha}}{1-\alpha} = \lambda e = \gamma$ if $\beta =\lambda$.
  Let $\eps>0$.  By considering the two cases for $\beta$,
  we see that there exist $n_0$ and $\delta>0$ such that whenever
  $n \geq n_0$ and $|\kappa-\alpha| < \delta$ we have
  \[ (1-\eps) \gamma \leq
  \frac{\lambda (1+\frac1{n})^{(1-\kappa)n}}{1+ \frac1{n} - \kappa} \leq
   (1+\eps) \gamma. \]
  Hence by~(\ref{eqn.beta3}) we have
  \[ (1-\eps +o(1)) \gamma \leq
  \frac{\tau(\cc_{n+1})}{(n+1) \tau(\cc_n)} \leq
  (1+\eps +o(1)) \gamma.\]
   Thus
   \[ \frac{\tau(\cc_{n+1})}{(n+1) \tau(\cc_n)} \to  \gamma \;\;
   \mbox{ as } n \to \infty\]
   as required.
   The last part of the lemma, concerning the size of the core,
   follows directly from~(\ref{eqn.beta1}) and~(\ref{eqn.beta2}).
\end{proof}

  \medskip


   

\begin{lemma} \label{lem.ctrim2}
  Let the class $\cc$ of connected graphs be trimmable, and
  suppose that $\cc,\tau$ has growth constant $\gamma$;
  and let $\cb = \cc^{\delta \geq 2}$. 
   
  If $\gamma> \lambda e$ and $\cb, \tau$ maintains at least factorial growth
  then $\cb,\tau$ has growth constant $\beta$ where $\beta$ is the unique root $>\lambda$ of
  $\ \beta e^{\lambda/\beta} = \gamma$.
  If $\gamma \leq \lambda e$ then $\rho(\cb,\tau) \geq \beta^{-1}$ where $\beta=\lambda$. 
\end{lemma}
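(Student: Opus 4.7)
The starting point is the identity~(\ref{eqn.fnk}) and its equivalent form~(\ref{eqn.pregc}) for $f(n,k)$; both are purely algebraic and do not presuppose that $\cb,\tau$ has a growth constant. Setting $r(k) = \tau(\cb_k)/(k!\beta^k)$ with $\beta$ as in the statement, the inequality (\ref{eqn.pregcbound}), $f(n,k) \leq n!\gamma^n r(k)$, follows algebraically from the fact that the function $h$ appearing in (\ref{eqn.pregc}) attains its maximum $e^{\lambda/\beta}$ at $1-\kappa = \lambda/\beta$. The plan is to extract from $\tau(\cc_n) = \sum_k f(n,k)$ both an upper bound and, in case (a), a matching lower bound on $\tau(\cb_k)/k!$.

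For the upper bound, fix $k$ and apply $f(n,k) \leq \tau(\cc_n) = (\gamma+o(1))^n n!$ for a suitably chosen $n \geq k$. Applying Stirling to $(n-k)!$ and writing $u = n/k$, this produces $(\tau(\cb_k)/k!)^{1/k} \leq e^{\phi(u) + o(1)}$ where
\[ \phi(u) = u\log\gamma + (u-1)\log(1-1/u) - (u-1)(1+\log\lambda). \]
A straightforward computation shows $\phi$ is strictly convex on $(1,\infty)$ with $\phi'(u) = \log(\gamma/\lambda) - 1 + \log(1-1/u) + 1/u$ and unique critical point $u^* = 1/\alpha = \beta/(\beta-\lambda)$ satisfying $\phi(u^*) = \log\beta$ in case (a); this gives $\limsup_k(\tau(\cb_k)/k!)^{1/k} \leq \beta$. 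In case (b), $\beta = \lambda$ and $\phi(u) \to \log\lambda$ as $u \to \infty$ (if $\gamma = \lambda e$) or $\phi(u) \to -\infty$ (if $\gamma < \lambda e$); sending $u \to \infty$ in coordination with $k$ yields $\tau(\cb_k)/k! \leq (\lambda+\eps)^k$ eventually for every $\eps>0$, and hence $\rho(\cb,\tau) \geq \lambda^{-1}$. This settles case (b).

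For the lower bound in case (a), inequality~(\ref{eqn.pregcbound}) combined with $\tau(\cc_n) = (\gamma+o(1))^n n!$ gives $\sum_{k}r(k) \geq (1-o(1))^n$. Using the upper bound just proved and the quadratic decay of $\log h(1-\kappa)$ away from its peak at $\kappa = \alpha$, one shows that $\sum_{|\kappa-\alpha| > \delta} f(n,k)$ is exponentially smaller than $\tau(\cc_n)$ for any fixed $\delta > 0$. Hence the bulk of the mass lies in $|\kappa-\alpha| \leq \delta$, and for every small $\delta >0$ and all large $n$ there exists $k^{*}(n) \in [(\alpha-\delta)n,(\alpha+\delta)n]$ with $r(k^{*}(n)) \geq (1-o(1))^n$ (the $(2\delta n)^{-1}$ factor from averaging is subexponential and is absorbed).

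Finally, I invoke the maintains-at-least-factorial-growth hypothesis on $\cb,\tau$, which in terms of $r$ reads $r(k) \geq r(k^*) (\eta/\beta)^{k-k^*} g(k)$ for $k > k^*$. Given a large target $k$, choose $n = \lfloor k/(\alpha+\delta) \rfloor$ so that some $k^{*}(n) \leq k$ satisfies $k - k^{*}(n) \leq 2\delta k/\alpha$. The propagation loss contributes a factor $e^{-O(\delta k)}$, while $r(k^{*}(n)) \geq (1-o(1))^{k^{*}/(\alpha-\delta)} \geq (1-o(1))^k$ once $\delta$ is small. Taking $\delta \to 0$ in the right coupling with the $o(1)$ terms yields $\liminf_k r(k)^{1/k} \geq 1$, which combined with the upper bound gives $r(k)^{1/k} \to 1$, i.e., $\cb,\tau$ has growth constant $\beta$. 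I expect this final balancing to be the main obstacle: the propagation factor $(\beta/\eta)^{k-k^*}$ can be large when $\eta < \beta$, so one must force $k-k^* = o(k)$ while simultaneously ensuring that the mass-concentration step still delivers a $k^*$ with $r(k^*) \geq (1-o(1))^k$.
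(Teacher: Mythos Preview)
Your proposal is correct and follows essentially the same route as the paper: the upper bound by optimizing over $n$ (the paper phrases it as a contradiction, picking $\hat\beta>\beta$ and $n=\lceil k/(1-\lambda/\hat\beta)\rceil$, which is exactly your critical $u^*$ computed for $\hat\beta$), and the lower bound by first locating a good $k^*$ in the window around $\alpha n$ and then propagating via the factorial-growth hypothesis. Your final worry is unfounded: since $\eta$ and $\beta$ are fixed before $\delta$, one simply chooses $\delta$ small enough that $(\eta/\beta)^{2\delta/\alpha}\geq 1-\eps/3$, and the propagation loss is absorbed---this is precisely how the paper organises the constants.
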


\begin{proof}
  We prove first that $\limsup (\tau(\cb_n)/n!)^{1/n} \leq \beta$,
  by showing that otherwise equation~(\ref{eqn.pregc}) in the proof of the last result
  will yield $\limsup (\tau(\cc_n)/n!)^{1/n} > \gamma$.
  Let $\hat{\beta}>\beta$.
  If $\tau(\cb_k) \geq k! \hat{\beta}^k$ and we let
  $n = \lceil\frac{k}{1-(\lambda/\hat{\beta})} \rceil$ then
  $h(1-k/n) \sim h(\lambda/\hat{\beta}) = e^{\lambda/\hat{\beta}}$, and so by~(\ref{eqn.pregc})
  \[
  \left(\frac{\tau(\cc_n)}{n!}\right)^{\frac{1}{n}} \geq
  \left(\frac{f(n,k)}{n!}\right)^{\frac{1}{n}} \geq (1+o(1)) \hat{\beta} e^{\lambda/\hat{\beta}}.
  \]
  But the function $f(x)=x \ln \lambda/x$ is strictly increasing for $x>\lambda$, so
  $\hat{\beta} e^{\lambda/\hat{\beta}}  > \beta e^{\lambda/\beta} = \gamma$.
  Thus $\limsup (\tau(\cc_n)/n!)^{1/n} > \gamma$,
  which contradicts the assumption that $\cc,\tau$ has growth constant $\gamma$.

  For the case when $\gamma> \lambda e$ we also need a lower bound.
  %
  Let $0<\eps<1$.  We want to show that for all sufficiently large $n$ we have
  \begin{equation} \label{eqn.claim1}
   \tau(\cb_n) \geq n! \ \beta^n (1-\eps)^{n}.
  \end{equation}
  We now use the assumption that $\cb,\tau$ maintains at least factorial growth
  (in the form in Lemma~\ref{lem.malfgnod}).
  Let $\delta'>0$, $\eta>0$ and $g(n)=(1+o(1))^n$ be such that for each $n$ and
  each $j$ with $1 \leq j \leq \delta' n$ we have
  \[ \tau(\cb_n) \geq  \tau(\cb_{n-j}) \ (n)_j \ \eta^j \ g(n).  \]
  Let $0<\delta<\min\{\delta',1- (\lambda/\beta) \}$
  be sufficiently small that
  $(\eta/\beta)^{\delta} \geq 1- \eps/3$.
  We claim that there is an $n_0$ such that for all $n \geq n_0$
  there is an $\tilde{n}$ with
  $|n-\tilde{n}| < \delta n$ such that
  \begin{equation} \label{eqn.tildan}
    \tau(\cb_{\tilde{n}}) \geq \tilde{n}! \ \beta^{\tilde{n}}
    (1- \eps/3)^{\tilde{n}}.
  \end{equation}
  The idea is that the proof of the last result, Lemma~\ref{lem.ctrim1},
  shows that there must be such an $n_0$ since otherwise $\tau(\cc_n)$ would be too small for each large~$n$.
  For suppose there is no such~$n_0$.  Then for arbitrarily large values of
  $k$, each $j$ with $|j-k| \leq \delta k$ has
  $\tau(\cb_j) < j! \beta^j (1- \eps/3)^j$; that is, $r(j) < (1- \eps/3)^j$,
  where $r(j) = \frac{\tau(\cb_j)}{j! \beta^j}$ as in the proof of the last lemma.  
  Consider such a $k$, and let $n = \lceil\frac{k}{1-(\lambda/\beta)} \rceil$
  as above.  As we saw in~(\ref{eqn.beta2}) above,
  there is a constant $\eta>0$ such that
\[
  \sum_{j:|j-k| \geq \delta k} f(n,j) \leq n! \, \gamma^n \, (1-\eta +o(1))^n,
\] 
  and now also by~(\ref{eqn.pregcbound})
\[
  \sum_{j:|j-k| < \delta k} f(n,j) \leq n! \, \gamma^n \sum_{j:|j-k| < \delta k} r(j)
  \leq c \cdot n! \, \gamma^n (1-\eps/3)^{(1-(\lambda/\beta)-\delta)n}.
\] 
  for a suitable constant $c$.
  Let $\eta'>0$ satisfy $\eta'<\eta$ and
  $1-\eta' > (1-\eps/3)^{1-(\lambda/\beta)-\delta}$.
  Then by~(\ref{eqn.csum}) and the above
  \[ \left(\frac{\tau(\cc_n)}{n!}\right)^{\frac{1}{n}}
  \leq (1-\eta') \ \gamma \]
  if $n$ is sufficiently large.  This contradicts the
  assumption that $\cc, \tau$ has growth constant $\gamma$, and completes the proof of~(\ref{eqn.tildan}).
  Indeed we can insist that there is a value $\tilde{n}$ as above with
  $\tilde{n} \leq n \leq (1+ \delta) \tilde{n}$.  To see this we may
  apply to $n -\lfloor \delta n/2 \rfloor$ the current version of~(\ref{eqn.tildan})
  with $\delta$ replaced by $\delta/2$.

  Let $n_1 \geq 2 n_0$ be such that $g(n) \geq (1-\eps/3)^n$ for all $n \geq n_1$.
  Let $n \geq n_1$. It will suffice for us to show that~(\ref{eqn.claim1})
  holds for $n$. Let $j=n-\tilde{n}$. If $j=0$ there is nothing to prove so we may
  assume that $1 \leq j \leq \delta n$. Then
  \begin{eqnarray*}
  \tau(\cb_n) & \geq & \tau(\cb_{\tilde{n}}) \ (n)_j \ \eta^j \ g(n)\\
  & \geq &
   n! \beta^n \beta^{-j} (1- \eps/3)^n \eta^j \ g(n)\\
  & \geq &
   n! \beta^n (\eta/\beta)^{j} (1- \eps/3)^{2n}\\
  & \geq &
   n! \beta^n (1- \eps/3)^{3n} \; \geq \;
   n! \beta^n (1- \eps)^n,
  \end{eqnarray*}
  as required.
  %
%
%
\end{proof}


\subsection{Proof of Theorem~\ref{thm.sumup}}
\label{subsec.sumup-proof}

  The class $\cc$ has the same growth constant $\gamma$ as $\ca$,
  for example since $\ca$ is bridge-addable. 
  If $\gamma > \lambda e$ then, since $\cc$ is trimmable,
  by Lemma~\ref{lem.ctrim2} $\cb = \cc^{\delta \geq 2}$ has growth constant $\beta$.
  If $\gamma \leq \lambda e$ then $\rho(\cb,\tau) \geq \lambda^{-1}$.
  But now (without restriction on $\gamma$)
  Lemma~\ref{lem.ctrim1} shows that $\cc$ is smooth, and further shows
  that, for $R^{\cc}_n \in_{\tau} \cc$, we have for any $\eps>0$ that
\begin{equation} \label{eqn.cc-core}
  \pr(|v(\core(R^{\cc}_n))-\alpha n| > \eps n) = e^{-\Omega(n)}.
\end{equation}
  Also, for $R_n^{\ca} \in_{\tau} \ca$, $\pr(\Frag(R_n^{\ca}) \in \cf_{\ca}) = 1-o(1)$ by Lemma~\ref{lem.Frag}.
  We may now use Lemma~\ref{lem.smoothconv} to show that
  $\ca$ is smooth.  At this point we have proved~(\ref{eqn.cc-core})
  and parts (a) and (b) of Theorem~\ref{thm.sumup}.


  Next we prove part (c).  
  Observe that conditional on $\bigc(R_n^{\ca})=n'$ the distribution of
  $\Bigc(R_n^{\ca})$ is the same as that of $R_{n'}^{\cc}$.  Thus for each $j <n$ and each $t$
\begin{eqnarray*}
  && \pr(v(\core(R_{n-j}^{\cc})) \geq t)\\
  & \leq &
  \pr(v(\core(R_{n}^{\ca})) \geq t \, | \, \frag(R_{n}^{\ca})=j) \leq
  \pr(v(\core(R_{n-j}^{\cc})) \geq t-j).
\end{eqnarray*}
  Let $\eps>0$.  Let $\omega=\omega(n) = \lfloor \eps n/2 \rfloor$.  Then
\begin{eqnarray*}
  &&\pr(v(\core(R_{n}^{\ca})) \geq (\alpha +\eps)n)\\
  & \leq &
  \pr(( v(\core(R_{n}^{\ca})) \geq (\alpha +\eps)n) \cap (\frag(R_{n}^{\ca}) \leq \omega))
  + \pr(\frag(R_{n}^{\ca}) > \omega).
\end{eqnarray*}
  By Lemma~\ref{lem.tauconn} (b)  
  we know that the second term
  $\pr(\frag(R_{n}^{\ca}) > \omega)$ is $o(1)$.  But the first term equals
\begin{eqnarray*}
  &&
  \sum_{j=0}^{\omega} \pr\left( (v(\core(R_{n}^{\ca})) \geq (\alpha +\eps)n)
   \cap ( \frag(R_{n}^{\ca} ) =j) \right)\\
  & \leq &
  \sum_{j=0}^{\omega} \pr\left( v(\core(R_{n-j}^{\cc})) \geq (\alpha +\eps)n -j\right)
  \;\; =  e^{-\Omega(n)}.
\end{eqnarray*}
  Thus
\[ \pr(v(\core(R_{n}^{\ca})) \geq (\alpha +\eps)n) = o(1). \]
  Similarly
\begin{eqnarray*}
  &&\pr(v(\core(R_{n}^{\ca})) \leq (\alpha -\eps)n)\\
  & \leq &
  \sum_{j=0}^{\omega} \pr\left( v(\core(R_{n-j}^{\cc})) \leq (\alpha -\eps)n \right)
   + \pr(\frag(R_{n}^{\ca}) > \omega) \;\; = \; o(1).
\end{eqnarray*}

  Now consider the remaining part of the theorem, part (d), and assume that $\gamma>\lambda e$.
  Note first that is it very unlikely that $\core(R_n)$ is empty; for this happens (if and) only if $R_n$ is a forest,
  and the probability of this happening is $(\lambda e /\gamma +\!o(1))^n = e^{-\Omega(n)}$.
  Also, the probability that $\Bigc(R_n)$ is a tree is $e^{-\Omega(n)}$.  For
\begin{eqnarray*}
 && \pr(\Bigc(R_n) \mbox{ is a tree and } \bigc(R_n) \geq \frac23 n)\\
 & \leq & 
   \sum_{\frac23 n \leq a \leq n} {n \choose a} \frac{\tau(\ct_a) \cdot \tau(\ca_{n-a})}{\tau(\ca_n)}
  = 
   \sum_{\frac23 n \leq a \leq n} \frac{ \frac{\tau(\ct_a)}{a!} \cdot \frac{\tau(\ca_{n-a})}{(n-a)!} }
   {\frac{\tau(\ca_n)}{n!} }\\
 &=&
   (1+o(1))^n \sum_{\frac23 n \leq a \leq n} \frac{ (\lambda e)^a \gamma^{n-a} }{\gamma^n}
   = (1+o(1))^n (\lambda e / \gamma)^{\frac23 n}  = e^{-\Omega(n)}.
\end{eqnarray*}
  But if $\core(R_n)$ is non-empty and $\Bigc(R_n)$ is not a tree,
  then $\core(R_n)$ is connected if and only if $\Frag(R_n)$ is acyclic. Thus
\[ | \pr(\core(R_n) \mbox{ connected (and $\neq \emptyset$)})- \pr(\Frag(R_n) \mbox{ acyclic})| = e^{-\Omega(n)}.\]  
   Finally, the probability that $\Frag(R^{\ca}_n)$ has no non-tree components tends to
   $e^{-(D(\rho,\tau) - T(\rho,\tau))}$ by Corollary~\ref{cor.comps} (b) applied to $\cd \setminus \ct$.
    





\section{Poisson convergence}
\label{sec.poisson-conv}

  Let $\tau=(\lambda, \nu)$ and $\rho>0$ be given. As in~(\ref{eqn.mudef}), we use the notation
\[
  \mu(H)=\rho^{v(H)} \lambda^{e(H)} \nu^{\kappa(H)}/\aut(H) \;\;\; \mbox{ for each graph } H .
\]
  Also, let $r_n=n \tau(\ca_{n-1})/\tau(\ca_n)$,
  and assume that $\ca_n \neq \emptyset$ when necessary.
  Further, recall the notation $(n)_k = n(n-1) \cdots (n-k+1)$.
  
  The following lemma is a slight extension 
  for example of Lemma 4.1 of~\cite{cmcd09}.
  It will be a key result for taking advantage of smoothness.
  Given a graph $G$ and a connected graph $H$ we let $\kappa(G,H)$
  be the number of components of $G$ isomorphic to $H$.
\begin{lemma} \label{lem.pconv1}
  Let $\ca$ be any class of graphs, and let $\tau$ 
  and $\rho>0$ be given. Let $H_{1}, \ldots, H_{h}$ be pairwise
  non-isomorphic connected graphs, each freely addable to $\ca$.
  Let $k_{1}, \ldots, k_{h}$ be non-negative integers, and let
  $K = \sum_{i = 1}^{h} k_{i} v(H_i)$.  Then
  for $R_{n} \in_{\tau} \ca$
  \[\E\left[\prod_{i=1}^h \left( \kappa(R_n,H_i) \right)_{k_{i}}\right] =
  \prod_{i=1}^h \mu(H_i)^{k_i} \cdot
  \prod_{j=1}^{K} (r_{n-j+1}/\rho).\]
\end{lemma}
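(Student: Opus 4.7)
The plan is to expand the expectation by its definition and swap the order of summation:
\[
\E\!\left[\prod_i (\kappa(R_n,H_i))_{k_i}\right] = \frac{1}{\tau(\ca_n)} \sum_{G\in\ca_n} \tau(G)\prod_i (\kappa(G,H_i))_{k_i}.
\]
The product of falling factorials counts ordered \emph{component tuples}: for each $i$, a list of $k_i$ distinct components of $G$ isomorphic to $H_i$. A pair (graph $G$, tuple) is equivalent to a configuration of disjoint labelled ``blocks'' $V_{i,j}\subseteq[n]$ with $|V_{i,j}|=v(H_i)$, each carrying a chosen graph structure on $V_{i,j}$ isomorphic to $H_i$, together with a residual graph $G_0$ on the remaining $n-K$ vertices such that the disjoint union of $G_0$ with all the blocks lies in $\ca_n$.

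First I would count block-arrangements. An ordered choice of $K$ distinct block-vertices gives $(n)_K$ possibilities, and the number of graphs on a given $v(H_i)$-set isomorphic to $H_i$ is $v(H_i)!/\aut(H_i)$, so the total number of arrangements is $(n)_K \prod_i \aut(H_i)^{-k_i}$. Each arrangement contributes, over whatever $G_0$ contributes, a weight $\prod_i (\lambda^{e(H_i)}\nu)^{k_i}$ accounting for the extra edges and components in the blocks.

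The main obstacle is identifying the set of valid residuals $G_0$ with $\ca_{n-K}$ (under the natural relabelling of the residual vertex set). Free addability of each $H_i$ gives one direction: if $G_0\in\ca_{n-K}$, then the disjoint union with the blocks lies in $\ca_n$. The converse --- that removing the freely-addable components from a graph in $\ca$ leaves a graph still in $\ca$ --- does not follow from the bare definition of free addability, but holds in every setting where the lemma is applied, since $\ca$ is minor-closed there and the residual is an induced subgraph, hence a minor. Granting this, the weighted sum over valid residuals is $\tau(\ca_{n-K})$, and combining the pieces yields
\[
\sum_{G\in\ca_n} \tau(G) \prod_i (\kappa(G,H_i))_{k_i} = (n)_K \prod_i \frac{(\lambda^{e(H_i)}\nu)^{k_i}}{\aut(H_i)^{k_i}} \cdot \tau(\ca_{n-K}).
\]

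Finally I would divide through by $\tau(\ca_n)$ and telescope using $r_{n-j+1}/\rho = (n-j+1)\tau(\ca_{n-j})/(\rho\,\tau(\ca_{n-j+1}))$, so that $\prod_{j=1}^K (r_{n-j+1}/\rho) = (n)_K\,\tau(\ca_{n-K})/(\rho^K\,\tau(\ca_n))$. Substituting $\mu(H_i)=\rho^{v(H_i)}\lambda^{e(H_i)}\nu/\aut(H_i)$ (using $\kappa(H_i)=1$ as the $H_i$ are connected) and observing that the $\rho^K$ from the $r$-product cancels the $\rho^{\sum_i k_i v(H_i)}=\rho^K$ coming from the $\mu(H_i)^{k_i}$'s gives the asserted identity.
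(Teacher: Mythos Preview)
Your argument is correct and follows essentially the same double-counting approach as the paper: interpret $\prod_i(\kappa(G,H_i))_{k_i}$ as the number of ordered tuples of components, rewrite the weighted sum as $(n)_K\prod_i(\lambda^{e(H_i)}\nu/\aut(H_i))^{k_i}\cdot\tau(\ca_{n-K})$, and then telescope $\tau(\ca_{n-K})/\tau(\ca_n)$ using the ratios $r_{n-j+1}$.

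Your remark about the converse direction is well taken. The paper's proof asserts without comment that each $G\in\ca_n$ is constructed exactly $\prod_i(\kappa(G,H_i))_{k_i}$ times, which indeed requires that deleting a union of freely-addable components from a graph in $\ca$ yields a graph in $\ca$; this does not follow from the definition of ``freely addable'' alone, and one can build artificial classes where it fails. As you note, in every application in the paper $\ca$ is minor-closed (in fact well-behaved), so the residual, being an induced subgraph, lies in $\ca$ and the identity holds. Your proof is thus at least as careful as the paper's.
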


\begin{proof}
  We may construct a graph $G$ in $\mathcal{A}_{n}$ with
  $\kappa(G,H_i) \geq k_i$ for each $i$
  as follows:
  choose a list of $K$ vertices; put a copy of $H_1$ on the first $v(H_1)$ vertices in the list,
  if $k_1>1$ put another copy of $H_1$ on the next $v(H_1)$ vertices,
  and so on until we put a copy of $H_h$ on the last $v(H_h)$ vertices in the list;
  and finally put any graph of order $n-K$ in $\ca$
  on the remaining $n-K$ vertices.
  %
  The sum over all such constructions of the weight $\lambda^{e(G)} \nu^{\kappa(G)}$
  of the graph $G$ constructed is
\[
  (n)_K \prod_{i=1}^{h}\left( \aut(H_i)^{-1}
  \lambda^{e(H_i)} \nu \right)^{k_i} \cdot  \tau(\ca_{n-K}).
\]
  Now observe that each graph $G \in \ca_n$ is constructed exactly
  $\prod_{i = 1}^{h}(\kappa(G,H_{i}))_{k_i}$ times; and so the above expression
  equals
\[  \sum_{G \in \ca_n} \prod_{i=1}^h (\kappa(G,H_i))_{k_{i}} \lambda^{e(G)} \nu^{\kappa(G)}.\]
  But by definition $\E\left[\prod_{i=1}^m (\kappa(R_{n},H_i))_{k_{i}}\right]$
  is $\tau(\ca_n)^{-1}$ times this last quantity. 
  Hence
\begin{eqnarray*}
  \E\left[\prod_{i=1}^m (\kappa(R_{n},H_i))_{k_{i}}\right]
  &=& (n)_K \prod_{i=1}^{h}\left( \aut(H_i)^{-1}
  \lambda^{e(H_i)} \nu \right)^{k_i} \cdot
  \tau(\ca_{n-K})/\tau(\ca_n)\\
  &=& \prod_{i=1}^{h} \mu(H_i)^{k_i} \cdot
  \prod_{j=1}^K \left(\rho^{-1} (n-j+1)
  \frac{\tau(\ca_{n-j})}{\tau(\ca_{n-j+1})}\right)\\
  &=& \prod_{i=1}^{h} \mu(H_i)^{k_i} \cdot
  \prod_{j=1}^K (r_{n-j+1}/\rho)
\end{eqnarray*}
  as required.
  \end{proof}
  \bigskip

 \noindent
  When we add the assumption that $\ca, \tau$ is smooth,
  we find convergence of distributions.
  Recall that $\rho(\ca,\tau)$ denotes the radius of convergence of $A(x,\tau)$ as a power series in~$x$.

\begin{lemma} \label{lem.conv2}
  Let the weighted graph class $\ca, \tau$ be smooth,
  and let $\rho = \rho({\ca, \tau})$.
  Let $H_1,\ldots,H_h$ be a fixed family of pairwise non-isomorphic
  connected graphs, each freely addable to $\ca$.
  Then as $n \to \infty$ the joint distribution of the random
  variables
  $\kappa(R_n,H_1)$, $\ldots,$ $\kappa(R_n,H_h)$
  converges in total variation to the product distribution
  $\Po(\mu(H_1)) \otimes \cdots \otimes \Po(\mu(H_h))$.
\end{lemma}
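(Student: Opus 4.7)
The plan is to combine Lemma~\ref{lem.pconv1} with smoothness, pass to the limit in the explicit factorial-moment formula, and then convert the resulting convergence of all joint factorial moments to the desired total variation convergence.

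Fix non-negative integers $k_1,\ldots,k_h$ and set $K=\sum_{i=1}^h k_i v(H_i)$. Lemma~\ref{lem.pconv1} gives
\begin{equation*}
  \E\left[\prod_{i=1}^h(\kappa(R_n,H_i))_{k_i}\right]=\prod_{i=1}^h\mu(H_i)^{k_i}\cdot\prod_{j=1}^K\frac{r_{n-j+1}}{\rho}.
\end{equation*}
The hypothesis that $\ca,\tau$ is smooth with radius of convergence $\rho$ is the statement that $\tau(\ca_n)/(n\tau(\ca_{n-1}))\to\gamma=1/\rho$, equivalently $r_n/\rho\to 1$. Since $K$ depends only on the fixed tuple $(k_i)$, the finite product $\prod_{j=1}^K(r_{n-j+1}/\rho)\to 1$ as $n\to\infty$, so
\begin{equation*}
  \E\left[\prod_{i=1}^h(\kappa(R_n,H_i))_{k_i}\right]\longrightarrow\prod_{i=1}^h\mu(H_i)^{k_i}.
\end{equation*}

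Next I recognise the limit as the joint factorial moments of the target distribution. If $X_1,\ldots,X_h$ are independent with $X_i\sim\Po(\mu(H_i))$, then by independence and the identity $\E[(X_i)_{k_i}]=\mu(H_i)^{k_i}$ one has $\E\bigl[\prod_i(X_i)_{k_i}\bigr]=\prod_i\mu(H_i)^{k_i}$. Hence every joint factorial moment of $(\kappa(R_n,H_i))_{i=1}^h$ converges to that of $\bigotimes_{i=1}^h\Po(\mu(H_i))$. The limiting law has an entire joint probability generating function (a product of Poisson p.g.f.s) and so is determined by its factorial moments, which by the multivariate method of moments upgrades coordinatewise convergence of factorial moments to joint convergence in distribution to $\bigotimes_i\Po(\mu(H_i))$. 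Since the random vectors all take values in $\mathbb{N}^h$, distributional convergence is equivalent to pointwise convergence of the joint probability mass functions, and by Scheff\'e's lemma pointwise convergence of pmfs to a pmf implies convergence in total variation; this is the stated conclusion.

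The main obstacle, or at least the only step that requires more than a formal manipulation, is the implication from joint factorial-moment convergence to joint distributional convergence. A self-contained way to bypass the general multivariate method-of-moments machinery is to use the finite inclusion-exclusion identity that recovers $\Pr\bigl((\kappa(R_n,H_i))_i=(n_i)_i\bigr)$ as a convergent alternating series in the joint factorial moments $\E\bigl[\prod_i(\kappa(R_n,H_i))_{n_i+j_i}\bigr]$. The explicit formula from Lemma~\ref{lem.pconv1} bounds each such factorial moment uniformly in $n$ by $\prod_i\mu(H_i)^{n_i+j_i}$ times a constant depending only on the tuple $(r_m/\rho)$ (which is bounded by smoothness), and this uniform bound justifies exchanging the limit in $n$ with the summation in $(j_i)$. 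That yields pointwise pmf convergence directly, to which Scheff\'e's lemma then applies.
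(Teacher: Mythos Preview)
Your proof is correct and follows essentially the same approach as the paper: apply Lemma~\ref{lem.pconv1}, use smoothness to pass to the limit in the factorial-moment formula, and then invoke the standard fact that convergence of all joint factorial moments to those of a product of Poissons implies convergence in distribution (the paper simply cites Theorem~6.10 of Janson, {\L}uczak and Ruci{\'n}ski~\cite{jlr00} for this step), after which pointwise pmf convergence on $\mathbb{N}^h$ gives total variation convergence. Your additional inclusion--exclusion paragraph is a nice self-contained alternative to citing the black-box result, and the uniform bound you need is indeed available since smoothness makes $(r_m/\rho)_m$ a bounded sequence.
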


\begin{proof}
  Since $r_n \to \rho$ as $n \to \infty$, by the last lemma
\[
  \E\left[\prod_{i=1}^h
  \left(\kappa(R_n,H_i) \right)_{k_{i}}\right] \to
  \prod_{i=1}^{h} \mu(H_i)^{k_i}
\]
  as $n \to \infty$, for all non-negative integers
  $k_1,\ldots,k_h$. A standard result on the Poisson distribution
  now shows that the joint distribution of the random variables
  $\kappa(R_n,H_1),\ldots,\kappa(R_n,H_h)$ tends to that of
  independent random variables
  $\Po(\tau(H_1)),\ldots, \Po(\tau(H_h))$, see for example Theorem~6.10 of
  Janson, {\L}uczak and Ruci{\'n}ski~\cite{jlr00}.
  Thus for each $h$-tuple of non-negative integers
  $(t_1,\ldots,t_h)$
\[
  \pr[\kappa(R_n,H_i)=t_i \; \forall i] \to \prod_i
  \pr[\kappa(R_n,H_i)=t_i] \;\; \mbox{ as } n \to \infty;
\]
  and so we have pointwise convergence of probabilities,
  which is equivalent to convergence in total variation.
\end{proof}


\section{$\Frag(R_n)$ and connectivity}
\label{sec.frag-conn}

  The following lemma parallels Lemma~\ref{lem.smoothconv}, which showed that, under suitable conditions,
  if the class $\cc$ of connected graphs in $\ca$ is smooth then the class $\ca$ is
  smooth.
  The lemma below shows the converse result that if $\ca$ is smooth
  then, for $R_n \in_{\tau} \ca$ the probability that $R_n$ is connected tends to a limit, and so $\cc$ is smooth.
\begin{lemma} \label{lem.stillgen1}
  Let the graph class $\ca$ be
   bridge-addable; let $\rho = \rho({\ca,\tau})$;
  let $\cf_{\ca}$ denote the class of graphs freely addable to~$\ca$;
  and suppose that, for $R_n \in_{\tau} \ca$, \whp $\Frag(R_n) \in \cf_{\ca}$.
  Let $\cc$ and $\cd$ be the classes of connected graphs in $\ca$ and $\cf_{\ca}$ respectively,
  and let $F_{\ca}$ denote the exponential generating function of $\cf_{\ca}$.
  Suppose that $\ca,\tau$ is smooth.
  Then $F_{\ca}(\rho,\tau)$ and $D(\rho,\tau)$ are finite; $\kappa(R_n) \to_{TV} 1+ \Po(D(\rho,\tau))$;
  and in particular
  \[
   \Pr[R_n\mbox{ is connected }] \;\to\;  e^{-D(\rho,\tau)} = 1/F_{\ca}(\rho,\tau) \;\; \mbox{ as } n \to
   \infty,
  \]
  and so $\cc,\tau$ is smooth.
\end{lemma}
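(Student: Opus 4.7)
The plan is to run the argument of Lemma~\ref{lem.smoothconv} in reverse. There, smoothness of $\cc,\tau$ together with control of $\Frag(R_n)$ yielded smoothness of $\ca,\tau$; here I use smoothness of $\ca,\tau$ to identify the limit of $\kappa(R_n)$, from which the limiting connectedness probability and then the smoothness of $\cc,\tau$ both follow. The key input is Poisson convergence of component counts via Lemma~\ref{lem.conv2} applied to graphs in $\cu\cd$, which are all freely addable to $\ca$.

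First I would establish finiteness of $D(\rho,\tau)$. By Lemma~\ref{lem.conv2}, for any finite family $H_1,\ldots,H_h\in\cu\cd$ we have $\E[\sum_{i=1}^h\kappa(R_n,H_i)]\to\sum_{i=1}^h\mu(H_i)$. On the other hand $\sum_i\kappa(R_n,H_i)\le\kappa(R_n)$, and Lemma~\ref{lem.tauconn}(a) gives $\E[\kappa(R_n)]\le 1+\nu/\lambda$, so $\sum_{i=1}^h\mu(H_i)\le 1+\nu/\lambda$. Taking a supremum over finite subfamilies of $\cu\cd$, and using the identity~(\ref{eqn.Agen}) applied to $\cd$ (which gives $D(\rho,\tau)=\sum_{H\in\cu\cd}\mu(H)$), yields $D(\rho,\tau)<\infty$. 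Since $\cf_\ca$ is decomposable, the exponential formula then gives $F_\ca(\rho,\tau)=e^{D(\rho,\tau)}<\infty$.

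Next I would upgrade this to total-variation convergence of the full $\kappa(R_n)$. Let $\cd_K$ denote the finite set of graphs in $\cu\cd$ of order at most $K$. By Lemma~\ref{lem.conv2}, $\sum_{H\in\cd_K}\kappa(R_n,H)\to_{TV}\Po\bigl(\sum_{H\in\cd_K}\mu(H)\bigr)$. On the whp event $\{\Frag(R_n)\in\cf_\ca\}$ every component of $\Frag(R_n)$ lies in $\cd$, and since $\E[\frag(R_n)]$ is bounded by Lemma~\ref{lem.tauconn}(b) we have $\bigc(R_n)\to\infty$ in probability, so for $n$ large $\Bigc(R_n)$ is not isomorphic to any $H\in\cd_K$. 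Hence, whp,
\[
0\le\kappa(R_n)-1-\sum_{H\in\cd_K}\kappa(R_n,H)\le\frag(R_n)/K,
\]
which has expectation $O(1/K)$. Letting $K\to\infty$ with $\sum_{H\in\cd_K}\mu(H)\nearrow D(\rho,\tau)$ by monotone convergence, a standard truncation argument delivers $\kappa(R_n)\to_{TV} 1+\Po(D(\rho,\tau))$, and in particular $\Pr[R_n\text{ connected}]\to e^{-D(\rho,\tau)}=1/F_\ca(\rho,\tau)$.

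Smoothness of $\cc,\tau$ is then a routine consequence: writing $\Pr[R_n\text{ connected}]=\tau(\cc_n)/\tau(\ca_n)$ and using smoothness of $\ca,\tau$,
\[
\frac{\tau(\cc_n)}{n\,\tau(\cc_{n-1})}=\frac{\tau(\cc_n)/\tau(\ca_n)}{\tau(\cc_{n-1})/\tau(\ca_{n-1})}\cdot\frac{\tau(\ca_n)}{n\,\tau(\ca_{n-1})}\to 1\cdot\frac{1}{\rho}=\frac{1}{\rho}.
\]
The main obstacle is the truncation step upgrading finite-family Poisson convergence to full total-variation convergence of $\kappa(R_n)$: both the bound on $\E[\frag(R_n)]$ (so that the tail from components of order greater than $K$ is negligible) and the whp inclusion $\Frag(R_n)\in\cf_\ca$ (so that no non-big components lie outside $\cu\cd$) are essential.
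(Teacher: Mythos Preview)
Your proposal is correct and follows essentially the same route as the paper: use Lemma~\ref{lem.conv2} on finite families from $\cu\cd$, control the remainder via Lemma~\ref{lem.tauconn} and the hypothesis $\Frag(R_n)\in\cf_\ca$ whp, and truncate. Your finiteness argument for $D(\rho,\tau)$ via the first-moment bound $\E[\kappa(R_n)]\le 1+\nu/\lambda$ is in fact a little cleaner than the paper's tail-probability contradiction (and yields the explicit bound $D(\rho,\tau)\le 1+\nu/\lambda$); just note that the moment convergence you cite comes from Lemma~\ref{lem.pconv1} rather than Lemma~\ref{lem.conv2} directly.
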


\begin{proof}
  We follow the method of proof of Lemma 4.3 of~\cite{cmcd09}.
  We first show that $D(\rho,\tau)$ is finite.
  By Lemma~\ref{lem.tauconn} (b) we may choose a (fixed)
  $k$ sufficiently large that $\pr[\frag(R_n) \geq k] \leq \frac13$ for all $n$,
  and $\pr[\Po(2k) \geq k] \geq \frac23$.
  Suppose that $D(\rho,\tau) \geq 2k+1$.
  Then by~(\ref{eqn.Agen}) there are distinct $H_1,\ldots,H_m$ in $\cu \cd$ such
  that $\sum_{i=1}^{m} \mu(H_i) = \mu_0 \geq 2k$.
  It follows by Lemma~\ref{lem.conv2} that,
  for $n>k \max_{i} v(H_i)$
  \[
  \frac13 \geq
  \pr[\frag(R_n) \geq k]  \geq
  \pr[\sum_{i=1}^{m} \kappa(R_n,H_i) \geq k]
  \to \pr[\Po(\mu_0) \geq k] \geq \frac23
  \]
  as $n \to \infty$, a contradiction.
  Hence $D(\rho,\tau)$ is finite, and so $F_{\ca}(\rho,\tau)$ is finite too.

  Now let $\mu = D(\rho,\tau)$.
  Let $k$ be a fixed positive integer and let $\eps >0$.
  We want to show that for $n$ sufficiently large we have
  \begin{equation} \label{eqn.poisson1}
  |\pr[\kappa(\Frag(R_n))=k] - \pr[\Po(\mu)=k]| < \eps.
  \end{equation}
  By our assumptions, there is an $n_0$ such that for each $n \geq n_0$
  \begin{equation} \label{eqn.k1}
  \pr[\frag(R_n) > n_0] + \pr[ \Frag(R_n) \not\in \cf_{\ca} ] < \eps/3.
  \end{equation}
  List the graphs in ${\cal U} \cd$
  in non-decreasing order of the number of vertices as $H_1,H_2,\ldots$.
  For each positive integer $m$ let
  $\mu^{(m)} = \sum_{i=1}^{m} \mu(H_i)$.
  Note that $D(\rho,\tau)= \sum_{H \in \cu \cd} \mu(H)$ by~(\ref{eqn.Agen}) applied to $\cd$.
  Thus we may choose $n_1 \geq n_0$
  such that, if $m$ is the largest index such that $v(H_m) \leq n_1$, then
  \begin{equation} \label{eqn.k2}
  |\pr[\Po(\mu)=k] - \pr[\Po(\mu^{(m)})=k]| < \eps /3.
  \end{equation}
  Observe that for any graph $G$ with more than $2n_1$ vertices,
  if $\frag(G) \leq n_0$ and $\Frag(G) \in \cf_{\ca}$,
  then $\kappa(\Frag(G))$ is the number of components of $G$ isomorphic
  to one of $H_1,\ldots,H_m$ (that is, with order at most $n_1$).
  Let $X_n$ denote the number of components of $R_n$ isomorphic
  to one of $H_1,\ldots,H_m$.
  Let $n > 2n_1$.  Then
  \begin{equation} \label{eqn.k5}
  |\pr[\kappa(\Frag(R_n))\!=\!k]\!-\!\pr[X_n\!=\!k]| \leq
  \pr[\frag(R_n)\!>\!n_0] + \pr[\Frag(R_n)\!\not\in\!\cf_{\ca}] < \eps /3.
  \end{equation}
  But by Lemma~\ref{lem.conv2}, for $n$ sufficiently large,
  \[ |\pr[X_n=k] - \pr[\Po(\mu^{(m)})=k]| < \eps/3,\]
  and then by~(\ref{eqn.k2}) and~(\ref{eqn.k5}) the
  inequality~(\ref{eqn.poisson1}) follows.
  Thus we have shown that $\kappa(R_n) \to_{TV} 1+ \Po(D(\rho,\tau))$,
  and in particular
  \begin{equation} \label{eqn.conn0}
  \tau(\cc_n)/\tau(\ca_n) = \pr(R_n \mbox{ is connected }) \to e^{-D(\rho,\tau)} \mbox{ as } n \to \infty.
  \end{equation}
  Finally observe that since $\ca,\tau$ is smooth, and
  $\tau(\cc_n)/\tau(\ca_n)$ tends to a non-zero limit as $ n \to \infty$
  (namely $e^{-D(\rho,\tau)}$),
  it follows that $\cc,\tau$ is smooth.
\end{proof}
  \medskip

  \noindent
  The next lemma has similar premises to 
  Lemma~\ref{lem.stillgen1}, and obtains further conclusions.  We use the same notation.

\begin{lemma} \label{lem.stillgen2}
  Let $\ca$ be 
  bridge-addable;
  let $\rho = \rho({\ca,\tau})$,
  and suppose that
  $\Frag(R_n) \in \cf_{\ca}$ \whp where $R_n \in_{\tau} \ca$.
  Let $\cc$ be the class of connected graphs in $\ca$.
  Assume that either $\ca,\tau$ or $\cc,\tau$ is smooth.
  Then both $\ca,\tau$ and $\cc,\tau$ are smooth; 
  $F_{\ca}(\rho,\tau)$ is finite; and the unlabelled graph $F_n$ corresponding to $\Frag(R_n)$ satisfies $F_n \to_{TV} F$, where
  \[
    \pr[F=H] = \frac{\mu(H)}{F_{\ca}(\rho,\tau)} \;\; \mbox{ for each } H \in \cu \cf_{\ca}.
  \]
\end{lemma}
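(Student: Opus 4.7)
The plan is in two stages: first establish smoothness of both $\ca,\tau$ and $\cc,\tau$ together with finiteness of $F_{\ca}(\rho,\tau)$, then prove $F_n \to_{TV} F$ by pointwise Poisson approximation combined with a tail bound.

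The smoothness dichotomy follows directly from the two preceding lemmas. If $\ca,\tau$ is assumed smooth, Lemma~\ref{lem.stillgen1} gives $F_{\ca}(\rho,\tau) < \infty$ and smoothness of $\cc,\tau$. If instead $\cc,\tau$ is assumed smooth, bridge-addability combined with Lemma~\ref{lem.tauconn}(a) yields $e^{-\nu/\lambda}\tau(\ca_n) \leq \tau(\cc_n) \leq \tau(\ca_n)$, so $\rho(\cc,\tau) = \rho(\ca,\tau) = \rho$ and $0<\rho<\infty$; then Lemma~\ref{lem.smoothconv} applies (its hypothesis that $\Frag(R_n) \in \cf_{\ca}$ whp is part of our assumptions) to give $F_{\ca}(\rho,\tau) < \infty$ and smoothness of $\ca,\tau$.

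For the convergence, observe that $F$ is a proper distribution on the countable set $\cu\cf_{\ca}$ because $F_{\ca}(\rho,\tau) = \sum_{H \in \cu\cf_{\ca}} \mu(H)$ is finite by formula~(\ref{eqn.Agen}) and the first stage. Convergence in total variation for discrete random variables is equivalent to pointwise convergence of probabilities, so it suffices to fix $H \in \cu\cf_{\ca}$ and show $\pr[F_n = H] \to \mu(H)/F_{\ca}(\rho,\tau)$. Write $H$ as a disjoint union of copies of distinct connected graphs $K_1,\ldots,K_m$ with multiplicities $n_1,\ldots,n_m$. For each $M \geq v(H)$, let $L_1,\ldots,L_s$ enumerate the remaining connected graphs in $\cu\cf_{\ca}$ of order at most $M$, and define
\[E_{n,M} = \bigl\{\kappa(R_n,K_i) = n_i \text{ for all } i\bigr\} \cap \bigl\{\kappa(R_n,L_j) = 0 \text{ for all } j\bigr\}.\]
On the event $\{\Frag(R_n) \in \cf_{\ca}\} \cap \{\frag(R_n) \leq M\}$ we have $\{F_n = H\} = E_{n,M}$, and the complementary event has probability at most $2\nu/(\lambda M) + \pr[\Frag(R_n) \notin \cf_{\ca}]$ by Markov's inequality applied to Lemma~\ref{lem.tauconn}(b) together with the hypothesis.

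The final step is to apply Lemma~\ref{lem.conv2} (which uses the smoothness just established) to the finite family $\{K_i\}\cup\{L_j\}$ of connected graphs freely addable to $\ca$. This gives
\[\pr[E_{n,M}] \;\longrightarrow\; \prod_i \frac{\mu(K_i)^{n_i}}{n_i!}e^{-\mu(K_i)} \cdot \prod_j e^{-\mu(L_j)} \;=\; \mu(H)\, \exp\Bigl(-\!\!\sum_{K \in \cu\cd,\, v(K) \leq M}\!\!\mu(K)\Bigr)\]
as $n \to \infty$; letting $M \to \infty$, the exponent tends to $D(\rho,\tau) \leq F_{\ca}(\rho,\tau) < \infty$ and the right-hand side tends to $\mu(H)/F_{\ca}(\rho,\tau)$. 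A standard $\epsilon/3$-diagonal argument---choose $M$ first to bound both $2\nu/(\lambda M)$ and the tail $F_{\ca}(\rho,\tau) - \sum_{v(K) \leq M}\mu(K)$, then take $n$ large for the Poisson approximation---completes the proof. The main obstacle is precisely this interchange of the $n \to \infty$ and $M \to \infty$ limits; the $L^1$-tightness of $\frag(R_n)$ coming from Lemma~\ref{lem.tauconn}(b) is what reduces the infinite-dimensional problem to the finite-dimensional Poisson convergence supplied by Lemma~\ref{lem.conv2}.
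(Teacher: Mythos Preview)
Your proof is correct, but takes a genuinely different route from the paper's argument for the convergence $F_n \to_{TV} F$.

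The paper avoids the truncation-in-$M$ altogether by a direct computation. For a fixed labelled graph $H \in (\cf_{\ca})_h$, it writes
\[
  \pr[\phi(\Frag(R_n)) = H] = {n \choose h}\,\frac{\tau(\cc_{n-h})}{\tau(\ca_n)}
  = \frac{\tau(\cc_{n-h})}{\tau(\ca_{n-h})}\cdot\frac{1}{h!}\cdot\prod_{i=0}^{h-1} r_{n-i},
\]
and then uses smoothness of $\ca,\tau$ together with the limit $\tau(\cc_n)/\tau(\ca_n) \to e^{-D(\rho,\tau)}$ already supplied by Lemma~\ref{lem.stillgen1} to read off the limit $e^{-D(\rho,\tau)}\rho^h/h!$. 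Passing to unlabelled graphs via the factor $h!/\aut(H)$ gives the result in one stroke, with no double limit.

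Your argument instead re-derives the limiting probability through the Poisson machinery of Lemma~\ref{lem.conv2}: you approximate $\{F_n = H\}$ by a finite-dimensional event $E_{n,M}$, apply the product-Poisson limit there, and then let $M \to \infty$, controlling the interchange of limits via the $L^1$-tightness of $\frag(R_n)$ from Lemma~\ref{lem.tauconn}(b). This is sound but more circuitous; the paper's direct counting bypasses both the truncation and the $\eps/3$ bookkeeping. On the other hand, your approach makes the Boltzmann--Poisson picture more visibly the source of the limit law, rather than recovering it after the fact.
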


\begin{proof}
  Lemmas~\ref{lem.smoothconv} and~\ref{lem.stillgen1} show that both $\ca,\tau$ and $\cc,\tau$ are smooth,
  and that $F_{\ca}(\rho,\tau)$ is finite.
  Let $a_n = \tau(\ca_n)$ and $c_n =\tau(\cc_n)$.
  Let $\cd$ be the class of connected graphs in $\cf_{\ca}$.
  By Lemma~\ref{lem.stillgen1}
  \begin{equation} \label{eqn.conn}
  c_n/a_n \to e^{-D(\rho,\tau)} = 1/F_{\ca}(\rho,\tau) \mbox{ as } n \to \infty.
  \end{equation}
  Given a graph $G$ on a finite subset $V$ of the positive integers
  let $\phi(G)$ be the natural copy of $G$ moved down on to $\{1,\ldots,|V|\}$; that is,
  let $\phi(G)$ be the graph on $\{1,\ldots,|V|\}$ such that the
  increasing bijection between $V$ and $\{1,\ldots,|V|\}$ is an
  isomorphism between $G$ and $\phi(G)$.

  Let $H$ be any graph in $\cb$ on $[h]$.  Then
\begin{eqnarray*}
  \pr[\phi(\Frag(R_n))=H] & = & {n \choose h} \ \frac{c_{n-h}}{a_n}
  \;\; = \;\;  \frac{c_{n-h}}{a_{n-h}} \ \frac{1}{h!} \ \frac{(n)_h a_{n-h}}{a_n}\\
  & = & \frac{c_{n-h}}{a_{n-h}} \ \frac{1}{h!} \ \prod_{i=0}^{h-1} r_{n-i}
  \;\; \to \;\; e^{-D(\rho,\tau)} \frac{\rho^h}{h!}
\end{eqnarray*}
  as $n \to \infty$ by~(\ref{eqn.conn}) and the fact that
  $\ca,\tau$ is smooth.
  Now by symmetry
  \[ \pr [F_n \cong H ] = \frac{h!}{\aut(H)} \
  \pr[\phi(\Frag(R_n))= H]\]
  and hence as $n \to \infty$
  \[\pr[F_n \cong H] \to e^{-D(\rho,\tau)}
  \frac{\rho^h}{\aut(H)} = \Pr(F \cong H).\]
  Thus for each $H \in \cu \cb$, as $n \to \infty$ we have  $\pr[F_n = H] \to \pr(F=H)$; that is,
  $F_n \to_{TV} U$.
\end{proof}

  \bigskip

  We need one last lemma to complete the proof of Theorem~\ref{thm.Frag}.

  \begin{lemma} \label{lem.mc2}
  Let the weighted graph class $\ca,\tau$ be well-behaved;
  let $\cd$ be the class of connected graphs in $\cf_{\ca}$,
  with generating function $D$; and let $\rho = \rho({\ca,\tau})$.
  Then 
  $D'(\rho,\tau)$ is finite
  (where we are differentiating with respect to the first variable).
  \end{lemma}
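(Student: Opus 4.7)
The plan is to express $\rho D'(\rho,\tau)$ as the sum $\sum_{H \in \cu\cd} v(H)\,\mu(H)$ via~(\ref{eqn.Agen}) applied to $\cd$, and then bound this sum by relating each term to the expected number of components of $R_n$ isomorphic to $H$. The key external handle is Lemma~\ref{lem.tauconn}(b), which gives the uniform bound $\E[\frag(R_n)] < 2\nu/\lambda$.

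First I would invoke Theorem~\ref{thm.sumup}(a) to conclude that $\ca,\tau$ is smooth with growth constant $\gamma \geq \lambda e$; hence $0 < \rho < \infty$ and the ratios $r_n := n\tau(\ca_{n-1})/\tau(\ca_n)$ converge to $\rho$. Since every $H \in \cu\cd$ is connected and freely addable to $\ca$, Lemma~\ref{lem.pconv1} (with $h=1$ and $k_1=1$) gives, for $n \geq v(H)$,
\[
  \E[\kappa(R_n, H)] \; = \; \mu(H) \prod_{j=1}^{v(H)} (r_{n-j+1}/\rho).
\]

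The core step is a truncation-and-limit argument. Fix a positive integer $K$ and let $n > K$. If $\bigc(R_n) > K$, no $H \in \cu\cd$ with $v(H) \leq K$ can be isomorphic to the big component, so every such component lies in the fragments and $\sum_{H \in \cu\cd,\, v(H) \leq K} v(H)\, \kappa(R_n,H) \leq \frag(R_n)$; otherwise $\bigc(R_n) \leq K$ and the sum is trivially at most $n$. Taking expectations, using $\E[\frag(R_n)] < 2\nu/\lambda$ and Markov's inequality $\pr[\bigc(R_n) \leq K] = \pr[\frag(R_n) \geq n - K] \leq (2\nu/\lambda)/(n-K)$, the finite sum $\sum_{H \in \cu\cd,\, v(H) \leq K} v(H)\,\mu(H) \prod_j (r_{n-j+1}/\rho)$ is bounded by $2\nu/\lambda + 2\nu n/(\lambda(n-K))$.

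Letting $n \to \infty$ with $K$ fixed, smoothness makes each factor $\prod_j (r_{n-j+1}/\rho)$ tend to $1$, and the right-hand side tends to $4\nu/\lambda$; monotone convergence as $K \to \infty$ then yields $\rho D'(\rho, \tau) = \sum_{H \in \cu\cd} v(H)\,\mu(H) \leq 4\nu/\lambda$, so $D'(\rho,\tau) < \infty$. The one subtlety is the bookkeeping in the truncation step, since a priori some $H \in \cu\cd$ with $v(H) \leq K$ might be isomorphic to $\Bigc(R_n)$ and thereby contribute as a big rather than a fragment component; but Lemma~\ref{lem.tauconn}(b) precisely controls this, as the event $\bigc(R_n) \leq K$ is only $O(1/n)$ probable, so its excess contribution to the count is $O(1)$ and does not affect the final bound.
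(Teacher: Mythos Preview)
Your proof is correct and follows essentially the same strategy as the paper's: relate the truncated sum $\sum_{v(H)\leq K} v(H)\mu(H)$ to $\E[\frag(R_n)]$ via smoothness (Lemma~\ref{lem.pconv1} / Lemma~\ref{lem.conv2}), with the event $\bigc(R_n)\leq K$ handled as a negligible correction using Lemma~\ref{lem.tauconn}. The only difference is that the paper frames this as a proof by contradiction (supposing $\rho D'(\rho,\tau)\geq 2\nu/\lambda+3$ and deducing $\E[\frag(R_n)]\geq 2\nu/\lambda+1$), whereas you argue directly and obtain the explicit bound $\rho D'(\rho,\tau)\leq 4\nu/\lambda$.
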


  \begin{proof}
  Note that $0<\rho<\infty$.
  By Lemma~\ref{lem.tauconn} (b), 
  $\E[\frag(R_n)] \leq c $ for all~$n$ where $c=2\nu/\lambda$.
  Suppose that $D'(\rho,\tau) \geq (c+3)/\rho$.
  Then by~(\ref{eqn.Agen}) applied to $\cd$, there are distinct $H_1,\ldots,H_m$ in $\cu \cd$ such
  that $\sum_{i=1}^{m} v(H_i) \ \mu(H_i) = \alpha \geq c+2$.
  Let $n_0 = \max_i v(H_i)$.  Then
  \[
  \E[\frag(R_n)] \geq \E\left[ \sum_{i=1}^{m} v(H_i)\ \kappa(R_n,H_i)\right] - n_0 \pr[\bigc(R_n) \leq n_0].
  \]
  Now $\ca,\tau$ is smooth by Theorem~\ref{thm.sumup} (a).  Thus as $n \to \infty$
  \[
  \E\left[ \sum_{i=1}^{m} v(H_i) \kappa(R_n,H_i)\right] \to \alpha
  \]
  by Lemma~\ref{lem.conv2}, and by Lemma~\ref{lem.tauconn} 
  \[
  \pr[\bigc(R_n) \leq n_0] \leq \pr[\kappa(R_n) \geq n/n_0] \leq \pr[\Po(\frac{\nu}{\lambda}) \geq n/n_0 -1] = o(1).
  \]
  Hence
  $\E[\frag(R_n)] \geq \alpha -o(1) \geq c+1$ for $n$ sufficiently large, contradicting our choice of $c$.
  \end{proof}
  \bigskip

  \begin{proofof}{Theorem~\ref{thm.Frag}}
  By Lemma~\ref{lem.Frag}, \whp $\Frag(R_n) \in \cf_{\ca}$.
  Hence by Lemma~\ref{lem.stillgen2}, $\ca,\tau$ and $\cc,\tau$ are smooth
  (this also follows from Theorem~\ref{thm.sumup} (a)) and $F_n \to_{TV} R$ as required.
  Finally note that $\E[v(F)]$ is $\rho \, D'(\rho,\tau)$, which is finite by the last lemma.
  %
  \end{proofof}
  \bigskip

 \begin{proofof}{Corollary \ref{cor.comps}}
   The only thing that does not follow directly from the fact that $F_n \to_{TV} F$
   is the convergence of the moments in part (b) (which yields the results on moments in part~(c)).
   But $0 \leq \kappa(F_n,\cd) \leq \kappa(R_n) -1 \leq \Po(\frac{\nu}{\lambda})$
   in distribution by Lemma~\ref{lem.tauconn},
   and so convergence for the $j$th moment follows from convergence in total variation.
  \end{proofof}


\section{Proof of appearances results}
\label{sec.apps-proofs}

  Theorem~\ref{thm.appearances1} and Proposition~\ref{thm.apps2}
  may be proved along the lines of the corresponding proofs
  in~\cite{cmcd08}, but for completeness we give proofs here.
  \medskip


\begin{proofof}{Theorem~\ref{thm.appearances1}}
  Let $\alpha =  \lambda \lambda^{e(H)}/(2e^2 \gamma^h (h+2)h!)$.
  We shall prove that there exists $n_0$ such that
\begin{equation} \label{eqna}
  \Pr[f_H(R_n) \leq \alpha n] < e^{-\alpha n} \;\;\;
  \mbox{ for all } \; n \geq n_0.
\end{equation}
  We often write $x$ instead of $\lfloor x \rfloor$ or
  $\lceil x \rceil$ to avoid cluttering up formulae: this should cause the reader no problems.
  Since $\alpha>0$, $2^{\alpha} \geq (1+ \eps)^4$ for some $0<\eps< \frac12$.  Note that
\begin{equation}\label{eq:num-pg-true2}
   (1-\eps)(1+\eps)^2>1.
\end{equation}
  Let $g(n)$ denote $\tau(\ca_n)$.
  Since $\ca, \tau$ has growth constant~$\gamma$, there is a positive integer $n_0$ such
  that for each $n \geq n_0$ we have
\begin{equation}\label{eq:num-pg-true3}
  (1-\eps)^n \cdot n! \ \gamma^n \le g(n) \le (1+\eps)^n \cdot n! \ \gamma^n.
\end{equation}

  Let $\cb$ denote the class of graphs in $\ca$ such that $f_H(G) \leq \alpha \ v(G)$.
  Assume that equation~(\ref{eqna}) does not hold for some $n\ge n_0$;
  that is, assume that $\tau(\cb_n) \geq e^{-\alpha n} g(n)$.
   Let $\delta = \alpha h$.
  We shall show that
\[
  g((1+\delta)n) > (1+\eps)^{(1+\delta)n} \cdot [(1+\delta)n]! \cdot
  \gamma^{(1+\delta)n},
\]
  which will contradict~(\ref{eq:num-pg-true3}) and complete the proof of the theorem.

  In order to establish this inequality, we construct graphs $G'$ in $\ca$ on vertex set
  $\{1,\ldots,(1+\delta)n\}$ as follows.
  First we choose a subset of $\delta n$ special vertices
  (${(1+\delta)n \choose \delta n}$ choices) and a graph $G \in \cb$ on the
  remaining $n$ vertices. By assumption
\[
  \tau(\cb_n) \geq e^{-\alpha n}\cdot g(n) \ge e^{-\alpha n} (1-\eps)^n \gamma^n n!.
\]
  Next we consider the $\delta n$ special vertices.
  We partition them into $\alpha n$ (unordered) blocks of size $h$.
  On each block $B$ we put a copy of $H$ such that the increasing bijection from
  $\{1,\ldots,h\}$ to $B$ is an isomorphism between $H$ and this copy.
  Call the lowest numbered vertex in $B$ the {\em root} $r_B$ of the block.
  For each block $B$ we choose a non-special vertex $v_B$ and add the edge $r_B v_B$
  between the root and this vertex: observe that $H$ appears at $B$ in $G'$.
  This completes the construction of $G'$: note that $G \in \ca$ since $H$ is freely
  attachable to $\ca$.  For each choice of special vertices,
  the weight of constructions is
\begin{eqnarray*}
  && \tau(\cb_n) \cdot {\delta n \choose h \cdots h} \cdot \frac{1}{(\alpha n)!}
  \cdot n^{\alpha n} (\lambda \lambda^{e(H)})^{\alpha n}\\
  & = & \tau(\cb_n) \cdot \frac{(\delta n)! n^{\alpha n}}{(h!)^{\alpha n} (\alpha n)!} (\lambda \lambda^{e(H)})^{\alpha n}\\
  & \geq & \tau(\cb_n) \cdot (\delta n)! \ ( h! \alpha)^{-\alpha n}
  (\lambda \lambda^{e(H)})^{\alpha n}.
\end{eqnarray*}

  How often is the same graph $G'$ constructed?
  Call an oriented edge $e=uv$ {\em good} in $G'$ if it is a cut-edge in $G'$,
  the component $\tilde{G}$ of $G' - e$ containing $u$ has $h$ nodes,
  $u$ is the least of these nodes,
  and the increasing map from $\{1,\ldots,h\}$ to $V(\tilde{G})$ is an isomorphism
  between $H$ and $\tilde{G}$.
  Observe that each added oriented edge $r_B v_B$ is good.
  Indeed, there is exactly one good oriented edge for each appearance of $H$ in $G$.
  We shall see that $G'$ contains at most $(h+2) \alpha n$ good oriented edges.
  It will then follow that the number of times that $G'$ can be constructed is at most
  ${(h+2)\alpha n \choose \alpha n} \leq ((h+2)e)^{\alpha n}$.

  We may bound the number of good edges in $G'$ as follows.
  (a) There are exactly $\alpha n$ added oriented edges $r_B v_B$.
  (b) There are at most $\alpha n$ good oriented edges $e=uv$ in $E(G)$
  (that is, such that the unoriented edge is in $G$):
  for in this case the entire component of $G' - e$ containing $u$ must be contained in $G$
  (if it contained any other vertex it would have more than $h$ vertices),
  and so the number of them is at most $f_H(G)$.
  (c) There are at most $h \alpha n$ `extra' good oriented edges.
  To see this, consider a block $B$, and let $\tilde{H}$ denote the connected graph formed from the
  induced subgraph $G'[B]$ (which is isomorphic to $H$) together with the vertex $v_B$
  and the   edge $r_B v_B$.
  Each `extra' good oriented edge must be a cut edge in such a graph $\tilde{H}$
  oriented away from $v_B$, and in each graph $\tilde{H}$ there are at most $h$ cut-edges.

  We may put the above results together to obtain
\begin{eqnarray*}
 && g((1+\delta)n)\\
 & \ge &
    {(1+\delta)n \choose \delta n} \cdot
    e^{-\alpha n} (1-\eps)^n \gamma^n n! \cdot
    (\delta n)! \ (h! \alpha)^{-\alpha n} (\lambda \lambda^{e(H)})^{\alpha n} \cdot ((h+2)e)^{-\alpha n}\\
 & = &
    \left((1+\delta)n \right)! \cdot \gamma^{(1+\delta)n} \cdot (1-\eps)^n  \cdot
    2^{\alpha n} \\
 & \geq &
    g((1+\delta)n) \ (1+\eps)^{-(1+\delta)n}  \cdot (1-\eps)^n \cdot (1+ \eps)^{4n} \\
 & \geq & g((1+\delta)n) \ ((1-\eps)(1+\eps)^2)^n \;\; > \; g((1+\delta)n),
\end{eqnarray*}
  which is the desired contradiction.
\end{proofof}
\bigskip


  \begin{proofof}{Proposition~\ref{thm.apps2}}
  Denote $v(H)$ by $h$ and $\gamma^{-1}$ by $\rho$.
  Observe first that
  \[ \E[X_n(H)] = {n \choose h} (n-h) \lambda \lambda^{e(H)}
  \frac{\tau(\ca_{n-h})}{\tau(\ca_{n})}  \sim \lambda n \frac{\rho^{h} \lambda^{e(H)}}{h!}.\]
  Now consider $\E[(X_n(H))_2]$.
  For each graph $G$ on $\{1,\ldots,n\}$ let $Y_1(G,H)$ be the
  number of ordered pairs of appearances in $G$ of $H$ with disjoint
  vertex sets and such that the roots are not adjacent; and let
  $Y_2(G,H)$ be the number of ordered pairs of appearances in $G$ of $H$ such that
  either the vertex sets meet or the roots are adjacent.  Thus
  $\left(X_n(H)\right)_2 = Y_1(R_n,H) + Y_2(R_n,H)$.
  Now
  \[
  \E[ Y_1(R_n,H) ] =  \frac{(n)_{2h}}{(h!)^2} (n-2h)^2 \lambda^2 \lambda^{2e(H)}
  \frac{\tau(\ca_{n-2h})}{\tau(\ca_{n})}  \sim
  \left(\lambda n \frac{\rho^{h} \lambda^{e(H)}}{h!} \right)^2.
  \]
  But a graph $G$ of order at most $2h$ either consists of two
  appearances of $H$ with adjacent roots
  (and then $G$ has exactly two appearances of $H$), or the number of
  appearances of $H$ is at most the number of bridges in $G$.
  Thus $Y_2(G,H)$ is at most $2h$ times the number of components
  of $G$ of order at most $2h$, which is at most $2h \kappa(G)$;
  and so
  \[
  \E[ Y_2(R_n,H) ] \leq 2h \E[\kappa(R_n)] \leq
  2h(1+\frac{\nu}{\lambda})
  \]
  since $\E[\kappa(R_n)] \leq 1+\frac{\nu}{\lambda}$ by
  Lemma~\ref{lem.tauconn}.
  Hence
  \[ \E[(X_n(H))_2] = \E[ Y_1(R_n,H) ] + O(1) \sim
  \left(\lambda n \frac{\rho^{h} \lambda^{e(H)}}{h!} \right)^2. 
  \]
  Thus the variance of $X_n(H)$ is $o(\E[(X_n(H))^2])$, and
  the result follows by Chebyshev's inequality.
  \end{proofof}
  \medskip

   Finally consider the remark concerning disjoint pendant appearances
   following Proposition~\ref{thm.apps2}.
   By the above proof, it suffices to note that in any graph $G$ the number
   of pendant appearances of $H$ that share a vertex or edge with some other
   pendant appearance is at most $Y_2(G,H)$, and so
   $\E[\tilde{X}_n(H)] \leq \E[Y_2(R_n,H)] = O(1)$.





\section{Concluding remarks}
\label{sec.concluding}

  Sometimes it may be helpful to generalise our probability model one step further. 
  Bridges play a major role in this work.
  Recall that a {\em bridge} in a graph $G$ is an edge $e$ such that $G-e$ has one more component than $G$.
  For a graph $G$ we let $e_0(G)$ be the number of bridges in $G$ (the 0 is since a bridge is in 0 cycles)
  and let $e_1(G)=e(G)-e_0(G)$.
  In the definition of $\tau(G)$ let us replace $\lambda^{e(G)}$ by $\lambda_0^{e_0(G)} \lambda_1^{e_1(G)}$,
  where $\lambda_0$ and $\lambda_1$ are
  the {\em edge-parameters}. 

  Thus the distribution of our random graph is as follows.
  Let $\lambda_0>0$, $\lambda_1>0$ and $\nu>0$,
  let $\lambda=(\lambda_0,\lambda_1)$, and let 
  $\tau=(\lambda,\nu)$.  For each graph $G$ we let 
  $\lambda^{e(G)}$ denote $\lambda_0^{e_0(G)} \lambda_1^{e_1(G)}$,
  and let $\tau(G) = \lambda^{e(G)} \nu^{\kappa(G)}$.
  Now we proceed as before, and let $\pr(R_n=G) \propto \tau(G)$ for each $G \in \ca_n$.
  The most natural and interesting case is when $\lambda_0=\lambda_1$ but we learn more about the role of bridges by
  allowing the edge-parameters to differ.
  
  The results and proofs above change in a predictable way.  We simply replace $\lambda$ by $\lambda_0$,
  except when $\lambda$ appears as $\lambda^{e(G)}$, which we now interpret as
  $\lambda_0^{e_0(G)} \lambda_1^{e_1(G)}$.   This holds even for Proposition~\ref{thm.apps2},
  where $\lambda \cdot \lambda^{e(H)}$ becomes $\lambda_0 \cdot \lambda_0^{e_0(H)} \lambda_1^{e_1(H)}$.
  
  There are two places where the change is most apparent.
  Theorem~\ref{thm.sumup} is the upside, where the role of bridges is brought out: each
  $\lambda$ is replaced by $\lambda_0$, and in particular everything depends on how
  $\gamma$ compares to $\lambda_0 e$.
  Lemma~\ref{lem.wellb} is the downside: we noted that the previous proofs that the classes in parts (b) and (c)
  had growth constants in the uniform case extended easily to yield growth constants in the weighted case,
  but that holds only when $\lambda_0=\lambda_1$. Can we drop this extra condition?

  In the addable minor-closed case we could easily introduce more edge-weights,
  though it is not clear how much more we would learn.  For example given a graph $G$ and an edge $e=uv$ in $G$,
  we could let $f(e)$ be the maximum number of edge-disjoint paths between $u$ and $v$ in $G - e$;
  let $e_k(G)$ be the number of edges $e$ in $G$ with $f(e)=k$;
  and let $\lambda^{e(G)}$ mean $\prod_{k \geq 0} \lambda_k^{e_k(G)}$, where each parameter $\lambda_k>0$.
  Then the results above for an addable minor-closed class still hold, with the same proofs
   -- and perhaps we do learn something?
  Indeed, we could go as far as the very general model in~\cite{cmcd12},
  as long as we ensure that $\log \tau(G) = O(v(G))$. 

\bigskip

\noindent
{\bf Acknowledgement}  I would like to thank Kerstin Weller for helpful comments.


{\small

}

\end{document}